\NeedsTeXFormat{LaTeX2e}
\documentclass[reqno,10pt,a4paper]{amsart}

\usepackage{hyperref}
\usepackage{latexsym,amsmath, bm}
\usepackage{enumerate}
\usepackage{amsfonts}
\usepackage{amssymb}
\usepackage{geometry}
\usepackage{latexsym}
\usepackage{fixmath}
\usepackage{faktor}
\usepackage{mathtools}
\usepackage[T1]{fontenc}
\usepackage{fourier}
\usepackage{bbm}
\usepackage{color}
\usepackage{pgfplots}
\usepackage{graphicx}
\usepackage{fullpage}

\numberwithin{equation}{section}

\newcommand{\ER}{\Erdos-\Renyi}

\newcommand{\vH}{\vec H}

\newcommand{\vJ}{\vec J}

\newcommand{\vR}{\vec R}

\newcommand\fr{\mathfrak{r}}

\newcommand\fA{\mathfrak{A}}

\newcommand\fx{\mathfrak{x}}

\newcommand\cA{\mathcal{A}}
\newcommand\cB{\mathcal{B}}
\newcommand\cC{\mathcal{C}}

\newcommand\cE{\mathcal{E}}

\newcommand\cG{\mathcal{G}}
\newcommand\cH{\mathcal{H}}

\newcommand\cM{\mathcal{M}}

\newcommand\cO{\mathcal{O}}
\newcommand\cP{\mathcal{P}}

\newcommand\cR{\mathcal{R}}
\newcommand\cS{\mathcal{S}}
\newcommand\cT{\mathcal{T}}

\newcommand\cX{\mathcal{X}}
\newcommand\cY{\mathcal{Y}}
\newcommand\cZ{\mathcal{Z}}

\newcommand\MU{\vec\mu}

\newcommand\RHO{{\vec\rho}}

\newcommand{\TT}{\mathbb T}
\newcommand{\GG}{\mathbb G}
\newcommand{\G}{\mathbf G}

\newcommand\GER{\GG_{\text{ER}}}


\newcommand\dTV{d_{\mathrm{TV}}}

\newcommand{\Po}{{\rm Po}}


\renewcommand{\vec}[1]{\boldsymbol{#1}}
\newcommand{\vecone}{\vec{1}}

\newcommand\KL[2]{D_{\mathrm{KL}}\bc{{{#1}\|{#2}}}}

\newcommand\eul{\mathrm{e}}
\newcommand\eps{\varepsilon}

\newcommand\NN{\mathbb{N}}
\newcommand\pr{\mathbb{P}} 
\renewcommand\Pr{\pr}

\newcommand\Erw{\mathbb{E}}
\newcommand\ex{\mathbb{E}}

\newcommand\RR{\mathbb{R}}

\newcommand{\whp}{w.h.p.}

\newcommand{\tensor}{\otimes}

\newcommand\id{\mathrm{id}}


\newcommand{\ignore}[1]{\relax}

\newtheorem{definition}{Definition}[section]

\newtheorem{theorem}[definition]{Theorem}
\newtheorem{lemma}[definition]{Lemma}
\newtheorem{proposition}[definition]{Proposition}
\newtheorem{corollary}[definition]{Corollary}

\newtheorem{fact}[definition]{Fact}

\newcommand\Lem{Lemma}
\newcommand\Prop{Proposition}
\newcommand\Thm{Theorem}

\newcommand\Cor{Corollary}
\newcommand\Sec{Section}

\newcommand\bc[1]{\left({#1}\right)}
\newcommand\cbc[1]{\left\{{#1}\right\}}
\newcommand\bcfr[2]{\bc{\frac{#1}{#2}}}

\newcommand\brk[1]{\left\lbrack{#1}\right\rbrack}

\newcommand\abs[1]{\left|{#1}\right|}


\newcommand{\Erdos}{Erd\H{o}s}
\newcommand{\Renyi}{R\'enyi}

\newcommand{\Mezard}{M\'ezard}


\newcommand{\ecutm}{\mbox{\sc MaxCut}(\GG)}

\newcommand{\limb}{\lim_{\b \to \infty}}

\newcommand{\pd}[1]{\frac{\partial}{\partial #1}}

\renewcommand{\a}{\alpha}
\renewcommand{\b}{\beta}
\newcommand\eb{\eul^{-\b}}
\newcommand{\ebb}{\eul^{-2\b}}
\newcommand{\bstar}{\b^*}
\newcommand{\bdag}{\b^\dag}

\newcommand{\KLmrra}{\KL{\mu_\a}{\rho_\a \tensor \rho_\a}}
\newcommand{\cAda}{\cA}
\newcommand{\limn}{\lim_{n \to \infty}}

\newcommand{\zAx}{\zeta\cAda^d\xi}

\newcommand{\ZbG}{Z_{\GG,\b}}

\newcommand{\pmone}{\cbc{\pm 1}}
\newcommand{\vwe}{\{v,w\}}

\begin{document}
	
\title{The Ising antiferromagnet and max cut on random regular graphs}

\thanks{Amin Coja-Oghlan and Philipp Loick are supported by DFG CO 646/3}

\author{Amin Coja-Oghlan, Philipp Loick, Bal\'azs F. Mezei, Gregory B. Sorkin}

\address{Amin Coja-Oghlan, {\tt acoghlan@math.uni-frankfurt.de}, Goethe University, Mathematics Institute, 10 Robert Mayer St, Frankfurt 60325, Germany.}

\address{Philipp Loick, {\tt loick@math.uni-frankfurt.de}, Goethe University, Mathematics Institute, 10 Robert Mayer St, Frankfurt 60325, Germany.}

\address{Bal\'azs F. Mezei, {\tt balazs.mezei@cs.ox.ac.uk}, Oxford University, Wolfson Building, Parks Road, Oxford OX1 3QD, UK}

\address{Gregory B. Sorkin, {\tt g.b.sorkin@lse.ac.uk}, London School of Economics, Department of Mathematics, Houghton St, London WC2A 2AE, UK}

\begin{abstract}
	The Ising antiferromagnet is an important statistical physics model with close connections to the {\sc Max Cut} problem.
	Combining spatial mixing arguments with the method of moments and the interpolation method, we pinpoint the replica symmetry breaking phase transition predicted by physicists.
	Additionally, we rigorously establish upper bounds on the {\sc Max Cut} of random regular graphs predicted by Zdeborov\'a and Boettcher [Journal of Statistical Mechanics 2010].
	As an application we prove that the information-theoretic threshold of the disassortative stochastic block model on random regular graphs coincides with the Kesten-Stigum bound.
	\hfill{\em MSc:~05C80.}
\end{abstract}

\maketitle

\section{Introduction}\label{sec_intro}

\subsection{Motivation}
The Ising model is to statistical physics what the $k$-SAT problem is to computer science or the Ramsey problem to combinatorics:
it serves as a benchmark for new techniques to prove their mettle.
Devised by Lenz in the 1920s to explain magnetism, the Ising model can be defined on an arbitrary graph $G$.
Think of the vertices of $G$ as iron atoms that each carry one of two possible magnetic spins, $\pm1$.
With the topology of interactions defined by the edges of $G$, the Hamiltonian $\cH_G$ (the `energy' function) maps a spin configuration $\sigma\in\{\pm1\}^V$ to the number of edges of $G$ that link two vertices with the same spin, i.e.,
\begin{align}\label{eqHam}
	\cH_G(\sigma)&=\sum_{\vwe\in E}\frac{1+\sigma_v\sigma_w}{2}.
\end{align}
Together with a real parameter $\beta$ the Hamiltonian induces a probability distribution $\mu_{G,\beta}$ on the set of spin configurations via
\begin{align}\label{eq_boltzmann}
	\mu_{G,\beta}(\sigma)&=\frac{\exp(-\beta\cH_G(\sigma))}{Z_{G,\beta}}\qquad(\sigma\in\{\pm1\}^V)&\mbox{where}&&
	Z_{G,\beta}&=\sum_{\tau\in\{\pm1\}^V}\exp(-\beta\cH_G(\tau)).
\end{align}
This probability measure is called the {Boltzmann distribution}.
The normalising term $Z_{G,\beta}$ is known as the {partition function}.
If $\beta>0$, then $\mu_{G,\beta}$ favours spin configurations $\sigma$ with a small number of edges joining vertices with the same spin; this case is known as the {antiferromagnetic} Ising model.
By contrast, in the {ferromagnetic} case $\beta<0$ configurations with many aligned spins receive a boost. 

Both variants of the Ising model are of keen interest in physics and the literature on each, rigorous as well as non-rigorous, is vast~\cite{Friedli_2017,Huang_2009}.
But the antiferromagnetic Ising model appears to be more challenging.
According to physics lore this is because its Boltzmann distribution is prone to a complicated type of long-range correlation known as `replica symmetry breaking'.
Another way to see the challenge is that from the partition function we could solve the NP-complete problem {\sc Max Cut}:
as $\beta$ increases the mass of the Boltzmann distribution shifts to spin configurations with more edges joining vertices with opposite spins. 
Ultimately the measure concentrates on the maximum cuts of the graph $G$,
and it is well known (and easy to check) that
\begin{align}\label{eqMaxCut}
\mbox{\sc MaxCut}(G)&=\frac{dn}{2}+\lim_{\beta\to\infty}\frac{\partial}{\partial\beta}\log Z_{G,\beta}.
\end{align}

We study the Ising antiferromagnet on the random $d$-regular graph $\GG=\GG(n,d)$.
From a statistical physics perspective, this example has been suggested as one of the simplest models where replica symmetry breaking is expected to occur.
Fond of lattice-like geometries, physicists favour the random regular graph, which converges to the $d$-regular tree in the Benjamini-Schramm topology, over the \ER\ model.
In particular, regularity greatly simplifies the physics `cavity equations' that Zdeborov\'a and Boettcher \cite{Zdeborova_2009} employed to put forward a beautiful, well-known conjecture about {\sc Max Cut} on random regular graphs.
From a combinatorics perspective, the random regular graph provides a neat but notoriously challenging model for {\sc Max Cut}, both structurally (determining the fraction of edges it should be possible to cut, asymptotically almost surely) and algorithmically (finding algorithms that give large cuts in such graphs). 
The problem has received a great deal of attention in the combinatorics community, e.g.~\cite{Coppersmith_2004,Csoka_2016,Diaz_2003,Diaz_2007, Kalapala_2002,Kardos_2012,Sorkin_2019}.
Additionally, the Ising model is intimately related to the regular version of the disassortative stochastic block model~\cite{Coja_2020}, a prominent case study in Bayesian inference.

\subsection{Our contributions and paper outline}
Our first contribution is to identify the precise value of $\beta$ where the replica symmetry breaking phase transition occurs;
see Theorem~\ref{thm_elogz}.
A common approach to problems of this type would be the trick of bounding the second moment of random regular graphs by that of the \Erdos-\Renyi\, as applied in \cite{Achlioptas_2004, Coja_2016}. Since this approach fails in our case, we instead turn to harnessing spatial mixing arguments to establish the phase transition.
As a ramification of the replica symmetry breaking phase transition, our second contribution is to derive the information-theoretic threshold of the disassortative regular stochastic block model;
see Theorem~\ref{Thm_sbm}.
Our third contribution is to establish rigorously the upper bound on the {\sc Max Cut} of the random regular graph predicted by Zdeborov\'a and Boettcher;
see Corollary~\ref{cor_max_cut}.
Specifically, their prediction is based on the so-called `1-step replica symmetry breaking' formalism from physics.
Using the interpolation method it is easy to obtain a rigorous upper bound that comes as a variational problem.
However, this variational problem appears rather unwieldy at first glance.
But by expressing the variational problem as a certain random walk that we can analyze, we obtain an elegant explicit expression whose numerical results match those of Zdeborov\'a and Boettcher.

In the remainder of Section~\ref{sec_intro} we state the main results of the paper precisely.
In Section~\ref{sec_techniques} we outline the proof strategy, and in Section~\ref{sec_discussion} we discuss the advances over earlier work.
Details of the proofs of Theorem~\ref{thm_elogz}, Theorem~\ref{Thm_1rsb}, and Corollary~\ref{cor_max_cut} are given respectively in Sections \ref{sec_thm_1}, \ref{sec_thm_2}, and \ref{sec_cor}.

\subsection{Replica symmetry breaking}
The key quantity associated with the Ising model on $\GG$ is the partition function $Z_{\GG,\beta}$.
This is because various combinatorially meaningful observables derive from the partition function via differentiation; see, for example, \eqref{eqMaxCut}.
Because $Z_{\GG,\beta}$ scales exponentially in $n$, it is common to consider the normalised logarithm $n^{-1}\log Z_{\GG,\beta}$, known as the free energy.
Routine arguments show that this random variable concentrates about its mean.
Hence, we are led to investigate the function
\begin{align}\label{eqPhi}
	\Phi_d:\b\in(0,\infty)\mapsto\limn \frac{1}{n} \Erw \brk{\log \ZbG};
\end{align}
the limit is known to exist for all $d\geq3,\beta>0$~\cite{Bayati_2010}.
In particular, for a fixed $d\geq3$ the singularities $\beta$ of $\Phi_d$ (the points at which $\Phi_d$ cannot be expanded to an absolutely convergent power series) are called the {\em phase transitions} of the Ising model.
Hence, from a mathematical physics point of view computing $\Phi_d$ and pinpointing the phase transitions is the key challenge associated with the model.

Jensen's inequality immediately yields the inequality
\begin{align}\label{eqJensen}
\Phi_d(\beta)&\leq\limn \frac{1}{n}\log\ex\brk{\ZbG}=\log2+\frac{d}{2}\log\frac{1+\eul^{-\beta}}{2} ,
\end{align}
where the equality is taken from the (easy) calculation of
$\Erw[\ZbG]$ as \eqref{eqTech8} in \Lem~\ref{Cor_simple*}.
A tempting first guess might be that \eqref{eqJensen} is generally tight.
Combinatorially this would indicate that the Boltzmann distribution $\mu_{\GG,\beta}$ is free from long-range correlations.
To see this, consider the experiment of removing a single random edge $\vec e=\vwe$ from $\GG$.
Because short cycles are scarce, \whp\ the vertices $v,w$ have distance $\Omega(\log n)$ in $\GG-\vec e$.
Hence, in the absence of long-range correlations, in a sample $\vec\sigma$ from the Boltzmann distribution of $\GG-\vec e$, the spins $\vec\sigma_v,\vec\sigma_w$ should be asymptotically independent, i.e., $\pr[\vec\sigma_v=\vec\sigma_w\mid\GG,\vec e]=1/2+o(1)$.
Therefore, adding $\vec e$ back in should change the partition function by
\begin{align*}
	\log\frac{Z_{\GG,\b}}{Z_{\GG-\vec e,\b}}&=\log\bc{1-(1-\eul^{-\b})\pr[\vec\sigma_v=\vec\sigma_w\mid\GG,\vec e]}\sim\log\frac{1+\eul^{-\b}}{2}.
\end{align*}
Removing a random edge $dn/2$ times until all the edges are gone and observing that the partition function of the empty graph equals $2^n$, we would thus obtain equality in \eqref{eqJensen}.
However, the following theorem shows that \eqref{eqJensen} is tight 
only for $\beta$ up to an explicit threshold $\bstar$.

\begin{theorem} \label{thm_elogz}
For any $d\geq3$ let
\begin{align}\label{eqthm_elogz1}
    \bstar(d) = \log \bc{\frac{\sqrt{d-1}+1}{\sqrt{d-1}-1}} .
\end{align}
\begin{enumerate}[(i)]
	\item If $\beta<\bstar(d)$, then $$\Phi_d(\b) = \log2+\frac{d}{2}\log\frac{1+\eul^{-\beta}}{2}.$$
	\item If $\beta>\bstar(d)$, then $$\Phi_d(\b) < \log2+\frac{d}{2}\log\frac{1+\eul^{-\beta}}{2}.$$ 
\end{enumerate}
\end{theorem}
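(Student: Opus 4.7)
My plan is to prove the two parts of the theorem by complementary methods. The upper bound in part (i) is Jensen's inequality \eqref{eqJensen}, which is already in the excerpt, so what remains is the matching lower bound for $\b<\bstar(d)$ and the strict inequality for $\b>\bstar(d)$. The strategy is (a) a spatial mixing argument combined with a telescoping edge exposure for (i), and (b) the interpolation method evaluated at a non-trivial tree-BP fixed point for (ii).

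For the lower bound in (i), I start from the exact identity
\[
\log\frac{Z_{G+\vec e,\b}}{Z_{G,\b}} \;=\; \log\!\bigl(1-(1-\eul^{-\b})\,\mu_{G,\b}(\sigma_v=\sigma_w)\bigr),\qquad \vec e=\{v,w\},
\]
already sketched in the discussion surrounding \eqref{eqJensen}. Building $\GG$ one edge at a time and telescoping reduces the claim to showing that for a typical added edge $\vec e$, $\mu_{G,\b}(\sigma_v=\sigma_w)=1/2+o(1)$. The threshold $\bstar(d)$ is precisely the Kesten-Stigum reconstruction threshold for the Ising antiferromagnet on the infinite $d$-regular tree, characterised by $(d-1)\tanh^2(\b/2)=1$; below it the symmetric free-boundary Gibbs measure on the tree is extremal, i.e.\ the root spin is asymptotically independent of any bounded-depth boundary. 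Because $\GG$ converges locally to the $d$-regular tree in the Benjamini-Schramm sense and the endpoints of a typical edge have $\Omega(\log n)$-tree neighbourhoods in the current subgraph, I plan to transfer extremality on the tree to asymptotic independence on $\GG$, giving $\mu_{G,\b}(\sigma_v=\sigma_w)=1/2+o(1)$ throughout the telescope and hence the claimed value of $\Phi_d(\b)$.

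For part (ii), the plan is to use the Guerra-Toninelli interpolation method, which produces a variational upper bound
\[
\Phi_d(\b) \;\leq\; \mathcal{B}(\b,\pi)
\]
indexed by symmetric probability measures $\pi$ on cavity fields; the degenerate choice (point mass at zero field) reproduces the Jensen bound. The goal is to exhibit, for every $\b>\bstar(d)$, a non-trivial $\pi^\ast$ with $\mathcal{B}(\b,\pi^\ast)<\log 2+(d/2)\log((1+\eul^{-\b})/2)$. Guidance for $\pi^\ast$ comes from the linearised tree-BP operator at the trivial fixed point, whose spectral radius equals $(d-1)\tanh^2(\b/2)$ and crosses $1$ precisely at $\b=\bstar(d)$. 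A second-order expansion of $\mathcal{B}(\b,\pi)$ around the trivial fixed point in the direction of the leading eigenvector of this operator shows that the dominant quadratic correction switches sign exactly at $\bstar(d)$, so for $\b$ just above $\bstar(d)$ a small perturbation $\pi^\ast$ already gives the strict inequality; convexity and monotonicity in $\b$ then propagate the strictness to all $\b>\bstar(d)$.

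The main obstacle will be the spatial mixing step in (i). Unlike the ferromagnet, the antiferromagnet does not admit an FKG/monotone coupling, and, as the authors emphasise in Section~\ref{sec_intro}, the standard trick of comparing the second moment on $\GG$ with that on $\GER$ fails here. Two natural workarounds are (a) passing to the bipartite double cover of $\GG$, on which the antiferromagnet becomes a ferromagnet and monotone couplings are available, then projecting correlations back; or (b) analysing directly the tree BP recursion $x\mapsto(d-1)\operatorname{atanh}(\tanh(\b/2)\tanh x)$, showing it is a strict contraction at $0$ for $\b<\bstar(d)$, and controlling by a careful conditioning argument the $O(1)$ short cycles and the ``long edges'' that distinguish $\GG$ from a tree. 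The coefficient of linear stability of this map is what pins down the exact value of $\bstar(d)$.
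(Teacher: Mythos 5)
The telescoping identity you start from is fine, but the step that carries all the weight --- transferring extremality of the free-boundary tree measure into the statement $\mu_{G,\b}(\sigma_v=\sigma_w)=\frac12+o(1)$ for a typical edge of the (partially built) graph --- is not justified by Benjamini--Schramm convergence plus non-reconstruction, and in fact it is essentially the assertion the theorem is about. Local convergence says the neighbourhood of a typical edge is a tree; it says nothing about the distribution of the boundary condition that the rest of the graph induces on that neighbourhood, and non-reconstruction (\Lem~\ref{lem_reconstruction}) only yields decorrelation when the boundary spins are distributed like a \emph{typical sample of the broadcast process}. Claiming that the graph's boundary is typical in this sense is exactly the replica-symmetry property of the null model, so your telescope is circular at its key step. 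The paper avoids this by routing the argument through the planted model $\GG^*$ of \eqref{eqsbm}: there the spins on a tree-like neighbourhood really are distributed as the broadcast process (\Lem~\ref{Lemma_broadcast_coupling}), so \Lem~\ref{lem_reconstruction} applies, and the conclusion is transported back to $\GG$ via the truncated first and second moments (\Lem~\ref{Lemma_hunch}, \Prop~\ref{prop_recon_energy}) together with Paley--Zygmund and Azuma. Your fallback (b), worst-case contraction of the BP recursion with control of short cycles, only proves decay up to the Gibbs uniqueness threshold, which is strictly smaller than $\bstar(d)$ (compare the last two rows of Table~\ref{Tab_mc}), so it cannot reach the Kesten--Stigum bound; your fallback (a) does not give a valid reduction, since the Gibbs measure of the antiferromagnet on $\GG$ is not a projection of a ferromagnetic measure on the bipartite double cover.

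Your plan for part (ii) also has a concrete obstruction. At the replica-symmetric level ($y=1$ in \Lem~\ref{Prop_PD}) and for \emph{any} symmetric, hence mean-zero, field distribution $\fr$, independence gives $\Erw[X_2]=(1+\eul^{-\b})/2$ and $\Erw[X_1]=2\bc{(1+\eul^{-\b})/2}^d$ exactly, so $\phi_{\b,1}(\fr)=\log 2+\frac d2\log\frac{1+\eul^{-\b}}2$ identically: the RS interpolation bound is flat in $\pi$, there is no quadratic correction whose sign could flip at $\bstar$, and no perturbative choice of $\pi^\ast$ can push it below the annealed value. To beat annealed by interpolation you would need $y<1$ together with a genuinely non-perturbative $\fr$, and it is neither obvious nor established in the paper that this infimum drops below annealed immediately above $\bstar(d)$. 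The paper's mechanism for (ii) is different in kind: by \Lem~\ref{Lemma_GG*}, $\Phi_d(\b)$ equals the annealed value if and only if the \emph{planted} free energy does, and by \Lem~\ref{lem_Bethe} the planted free energy is the supremum of the Bethe functional \eqref{eq_Bethe_exp}; \Prop~\ref{prop_Taylor} then exhibits the two-atom measure $\pi^*_\eps$ for which a fourth-order Taylor expansion (the first three derivatives vanish) shows $\cB_{\text{Ising}}(\pi_\eps^*,\b,d)$ exceeds the annealed value precisely when $\b>\bstar(d)$, which, combined with Jensen's bound \eqref{eqJensen}, forces the strict inequality. So the strictness comes from a lower-bound witness for the planted model, not from an improved upper bound on the null model, and your proposal would need this planted-model detour (or some substitute for it) in both parts.
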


Because the function $\beta\mapsto\log2+\frac{d}{2}\log\frac{1+\eul^{-\beta}}{2}$ is analytic, \Thm~\ref{thm_elogz} implies that $\Phi_d(\beta)$ is non-analytic at the point $\beta=\bstar$.
Hence, there occurs a phase transition at $\bstar$ that separates a regime where $\ZbG$ concentrates about its mean from a regime where the mean is driven up by rare events.
In physics jargon this phase transition is called the {\em replica symmetry breaking} transition.
The value $\bstar$ has a special combinatorial meaning:
it is the reconstruction threshold for a broadcasting process first studied by Kesten and Stigum~\cite{Kesten_1966},
and is thus known as the `Kesten-Stigum bound'.
Thus, \Thm~\ref{thm_elogz} shows that the replica symmetry breaking phase transition in the Ising antiferromagnet on $\GG$ occurs precisely at the Kesten-Stigum bound.

\subsection{Bounding {\sc Max Cut}}\label{Sec_MC}
\Thm~\ref{thm_elogz} does not provide a simple expression for $\Phi_d(\b)$ for $\b>\bstar$.
Indeed, such a simple expression may not exist.
This is because according to physics predictions the value $\Phi_d(\b)$ for $\b>\bstar$ results from a complicated variational problem over an infinite-dimensional space of probability measures that meticulously characterises the long-range correlations of the Boltzmann distribution~\cite{Coja_2019}.

Yet in the limit $\beta\to\infty$ it is possible to derive an explicit upper bound on the value of $\Phi_d(\b)$.
To state this bound consider the following right stochastic band matrix $\cM$ of size $(d+1) \times (d+1)$:
\begin{align}\label{eq_Mdef}
    \cM =
    \begin{bmatrix}
        0  & 1  & 0 & \cdots & \cdots & \cdots & 0 \\
        \frac12  & 0 & \frac12  & \ddots & & & \vdots \\
        0 & \frac12  & 0 & \frac12 & \ddots & & \vdots \\
        \vdots & \ddots & \ddots & \ddots & \ddots & \ddots & \vdots  \\
        \vdots & & \ddots & \ddots & \ddots & \ddots & 0 \\
        \vdots & & & \ddots & \frac12 & 0 & \frac12 \\
        0 & \cdots & \cdots & \cdots & 0 & 1 & 0
    \end{bmatrix}.
\end{align}
Moreover, let
\begin{align}
F_d(\alpha,z)&=-\frac{\log \bc{\zAx}}{\log z} + \frac{d\log \bc{1-2\a^2+2\a^2z}}{2\log z},\qquad\mbox{where}\label{eq_F_def}\\
\cAda&=(1-2\a)\id+2\a \sqrt z\cM,\label{eq_A_def}\\
\zeta&= {\begin{bmatrix} 1, & 0, & 0, & \cdots & \end{bmatrix}}\in\RR^{1\times(d+1)} \label{eq_v_def}, \\
\xi &= {\begin{bmatrix} 1, & z^{-1/2}, & z^{-1}, & z^{-3/2}, & \cdots \end{bmatrix}}^T\in\RR^{(d+1)\times 1} \label{eq_w_def}.
\end{align}

\begin{theorem}\label{Thm_1rsb}
For any $d\geq3$ we have
\begin{align*}
\lim_{\beta\to\infty}\b^{-1}\Phi_d(\b)&\le\inf_{\substack{0<\a\leq1/2\\0<z<1}}F_d(\alpha,z).
\end{align*}
\end{theorem}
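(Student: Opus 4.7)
The plan is to obtain the bound via the interpolation method, substitute a specific 1-step replica symmetry breaking (1RSB) trial distribution parameterised by $(\alpha, z)$, and then reorganise the resulting variational expression as the output of a $d$-step auxiliary random walk on $\{0, 1, \ldots, d\}$ governed by $\mathcal{M}$.

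First, following the Franz--Leone/Panchenko interpolation framework adapted to sparse random factor models, I would derive an upper bound of the shape
\[
\Phi_d(\beta) \;\le\; \frac{1}{m}\log \Erw[\Xi_{\mathrm{ver}}^m] \;-\; \frac{d}{2m}\log \Erw[\Xi_{\mathrm{edge}}^m]
\]
valid for every Parisi parameter $m \in (0,1]$ and every symmetric measure on cavity biases, where $\Xi_{\mathrm{ver}}$ and $\Xi_{\mathrm{edge}}$ are local partition functions evaluated with cavity biases sampled from the trial measure. The zero-temperature regime is obtained by scaling $m\to 0$ and $\beta\to\infty$ jointly so that $z := \eul^{-m\beta} \in (0,1)$ stays fixed; the resulting upper bound on $\beta^{-1}\Phi_d(\beta)$ becomes a variational formula in $z$ and the trial measure.

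Second, I would substitute an atomic 1RSB ansatz placing mass $\alpha$ on each of the hard biases $\pm 1$ and mass $1-2\alpha$ on the free bias. A direct two-body computation then gives $\Erw[\Xi_{\mathrm{edge}}^m] \to 1-2\alpha^2(1-z)$ in the joint limit, reproducing the edge term of $F_d$. For the vertex contribution, at a vertex of degree $d$ one must sum over the $2\cdot 3^d$ configurations comprising a central spin and $d$ incoming hard/free biases, weighting each frustrated edge (a hard bias disagreeing with the central spin) by a factor of $z$.

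Third, I would reformulate this vertex sum as the matrix-vector product $\zeta \cAda^d \xi$. Writing $\cAda = (1-2\alpha)\id + 2\alpha\sqrt{z}\,\cM$ mirrors the expansion over "free" (weight $1-2\alpha$, no move) and "hard" (weight $2\alpha$, signed unit step) neighbours; the factor $\sqrt z$ is calibrated so that, combined with the terminal weight vector $\xi$ having entries $z^{-k/2}$, the $z$-power of the final number of frustrated edges is produced exactly once one optimises over the central spin. Because the optimal central spin is the sign of the net hard imbalance, the signed random walk can be collapsed onto the absolute value of the imbalance, yielding the tridiagonal reflecting structure of $\cM$ with boundary rows $[0,1,0,\ldots]$ and $[\ldots,0,1,0]$. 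Taking the logarithm, dividing by $\log z$ (negative, hence flipping signs appropriately), and taking the infimum over $0<\alpha\le 1/2$ and $0<z<1$ then produces the claimed bound $\inf_{\alpha,z} F_d(\alpha,z)$.

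The main obstacle will be the third step: carefully calibrating the $\sqrt{z}$ scaling in $\cAda$ and justifying the collapse of the signed walk onto its absolute value with the reflecting endpoints of $\cM$. One needs to verify that the optimisation over the central spin, combined with the symmetric averaging over the positions of hard $+1$ versus hard $-1$ neighbours, really does reproduce the reflecting simple random walk and that no contribution is lost in the quotient by the $\pm$ symmetry. A second delicate point is rigorously justifying the joint $m\to 0$, $\beta\to\infty$ limit of the interpolation bound so that it genuinely controls $\lim_{\beta\to\infty}\beta^{-1}\Phi_d(\beta)$ rather than the limit of a re-weighted free energy; the Franz--Leone monotonicity of the interpolation, together with the concentration of $n^{-1}\log\ZbG$, should suffice provided the trial measure is chosen so that all intermediate expectations remain uniformly bounded.
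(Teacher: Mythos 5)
Your proposal follows essentially the same route as the paper: the interpolation bound of \Lem~\ref{Prop_PD} (quoted from Sly--Sun--Zhang, in the Franz--Leone lineage) with the atomic trial measure $\fr_\a$ from \eqref{eqrhoalpha}, the joint limit $\b\to\infty$ with $z=\eul^{-y\b}$ fixed, the edge term converging to $1-2\a^2(1-z)$ as in \Lem~\ref{Lem_Explicit_Yprime}, and the vertex term collapsed onto a reflecting random walk giving $\zeta\cAda^d\xi$ as in \Lem~\ref{Lem_Explicit_Y}. Your flagged concerns are handled exactly as you anticipate (the interpolation bound holds for every $y>0$ at each fixed $\b$, so one may take $y=-\log z/\b$ and pass to the limit), so the plan is correct and matches the paper's proof.
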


Since \eqref{eqMaxCut} shows that the {\sc Max Cut} problem is tied to $\Phi_d(\b)$ for large $\b$, we can use \Thm~\ref{Thm_1rsb} to derive upper bounds on the maximum cut size of the random regular graph.
\begin{corollary} \label{cor_max_cut}
Let $\ecutm$ be the number of edges cut by a maximum cut of $\GG$.
Then, \whp,
\begin{align*}
\ecutm\leq\frac{dn}{2}\inf_{\substack{0<\a<1/2\\0<z<1}}\bc{1+\frac{2}{d}F_d(\alpha,z)}+o(n).
\end{align*}
\end{corollary}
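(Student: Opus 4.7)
The plan is to derive Corollary~\ref{cor_max_cut} from \Thm~\ref{Thm_1rsb} via the elementary observation that the partition function dominates the Boltzmann weight of any single configuration. Since the Hamiltonian $\cH_{\GG}$ in \eqref{eqHam} counts monochromatic edges, every spin configuration $\sigma\in\pmone^V$ satisfies $\cH_{\GG}(\sigma)\geq dn/2-\ecutm$, with equality whenever $\sigma$ realises a maximum cut. Retaining just this optimal term in the partition sum yields
\begin{align*}
\ZbG\geq\exp\bc{-\b\bc{dn/2-\ecutm}},
\end{align*}
or equivalently the deterministic one-sided inequality
\begin{align*}
\ecutm\leq\frac{dn}{2}+\frac{1}{\b}\log\ZbG,
\end{align*}
valid for every $\b>0$. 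This is a finite-$\b$ relaxation of the identity \eqref{eqMaxCut}: rather than extracting $\mbox{\sc MaxCut}$ as a limiting derivative one merely keeps the slack from restricting to the ground state.

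Taking expectations, dividing by $n$, and sending $n\to\infty$ while recalling the definition \eqref{eqPhi} of $\Phi_d$ yields, for every $\b>0$,
\begin{align*}
\limsup_{n\to\infty}\frac{\ex[\ecutm]}{n}\leq\frac{d}{2}+\frac{\Phi_d(\b)}{\b}.
\end{align*}
Letting $\b\to\infty$ and invoking \Thm~\ref{Thm_1rsb} then gives
\begin{align*}
\limsup_{n\to\infty}\frac{\ex[\ecutm]}{n}\leq\frac{d}{2}+\inf_{\substack{0<\a<1/2\\0<z<1}}F_d(\a,z),
\end{align*}
and pulling out the factor $dn/2$ recovers the in-expectation form of the bound claimed in Corollary~\ref{cor_max_cut}.

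To upgrade this to the asserted \whp\ statement I appeal to concentration of $\ecutm$ about its mean. Since switching a single edge of $\GG$ changes $\mbox{\sc MaxCut}$ by at most one, a standard edge-exposure martingale argument---or equivalently McDiarmid's bounded-differences inequality applied to the configuration model, followed by conditioning on simplicity---gives $\abs{\ecutm-\ex[\ecutm]}=o(n)$ \whp, which combined with the expectation bound delivers the \whp\ estimate. The main point worth flagging---essentially the only reason the corollary is not immediate given \Thm~\ref{Thm_1rsb}---is that the inequality $\log\ZbG\geq\b(\ecutm-dn/2)$ is oriented the right way to furnish an \emph{upper} bound on $\ecutm$, so letting $\b\to\infty$ does not erode the estimate and no exchange of limits needs to be justified; the infimum in \Thm~\ref{Thm_1rsb} transfers transparently to the zero-temperature regime.
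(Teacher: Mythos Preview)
Your argument is correct and follows essentially the same route as the paper: the paper also uses the deterministic bound $\ecutm\leq dn/2+\b^{-1}\log\ZbG$, passes to expectation, applies \Thm~\ref{Thm_1rsb}, and then relies on concentration to get the \whp\ statement. The only difference is cosmetic---you spell out the Azuma/McDiarmid concentration step explicitly, whereas the paper leaves it implicit (having already carried out the analogous martingale argument for $\log\ZbG$ earlier).
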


Zdeborov\'a and Boettcher \cite{Zdeborova_2009} conjectured that the expected maximum cut size in a random regular graph is upper bounded by the solution to the one-step replica-symmetry breaking equations and provided numerical estimates of the resulting cut size.
Corollary~\ref{cor_max_cut} matches their numbers.

Table~\ref{Tab_mc} displays the upper bounds from Corollary~\ref{cor_max_cut} for $d=3,\ldots,10$.
For comparison the table also contains the previous best rigorous upper bounds we are aware of, and the best rigorous lower bounds.
Upper bounds appear to have received little attention.
For $d>3$ the upper bounds shown come from straightforward application of the first moment method, counting cuts of the given size; this can be done either by standard counting arguments or using \cite[Corollary 2.8]{Coja_2016}, in either case followed by a small numerical computation.
For $d=3$ better upper bounds come from the first moment method but restricting to cuts satisfying some local maximality conditions; the bound shown is from \cite{Sorkin_2019}.
The lower bounds result from analyses of algorithms, and are from \cite{Gamarnik_2018} via \cite{Csoka_2015} for $d=3$, \cite{Diaz_2003} for $d=4$ and \cite{Diaz_2007} for $d>4$.

\begin{table}[ht]
\label{table_bound}
\begin{center}
\begin{tabular}{ |c|c|c|c|c|c|c|c|c| }
 \hline
 $d$ & 3 & 4 & 5 & 6 & 7 & 8 & 9 & 10 \\
 \hline
 \mbox{best previous upper bound} & 0.9320 & 0.8900 & 0.8539 & 0.8260 & 0.8038 & 0.7855 & 0.7701 & 0.7570 \\
 \Cor~\ref{cor_max_cut} upper bound & 0.9241 & 0.8683 & 0.8350 & 0.8049 & 0.7851 & 0.7659 & 0.7523 & 0.7388 \\
\mbox{best lower bound} & 0.9067 & 0.8333 & 0.7989 & 0.7775 & 0.7571 & 0.7404 & 0.7263 & 0.7144 \\
\mbox{expected cut size at $\bstar$} & 0.8536 & 0.7887 & 0.7500 & 0.7236 & 0.7041 & 0.6890 & 0.6768 & 0.6667 \\
\mbox{expected cut size at Gibbs uniqueness} & 0.7500 & 0.6667 & 0.6250 & 0.6000 & 0.5833 & 0.5714 & 0.5625 & 0.5556 \\
\hline
\end{tabular}
\end{center}
\caption{Bounds on the fraction of edges in a maximum cut of $\GG(n,d)$.
}
\label{Tab_mc}
\end{table}

The article of Zdeborov\'a and Boettcher contains a second, more prominent conjecture that ties together the {\sc Min Bisection} and {\sc Max Cut} problems on random regular graphs,
namely that the two cases result \whp\ in asymptotically equal numbers of
edges `dissatisfied' (respectively, cut and not cut).
Unfortunately the methods of the present work do not appear to shed light on this question.

However, the work does shed light on a different question of interest:
as an application of \Thm~\ref{thm_elogz} we can calculate the information-theoretic threshold of the disassortative stochastic block model.

\subsection{The stochastic block model}\label{Sec_sbm}
Over the past decade the stochastic block model has become a prominent benchmark for Bayesian inference as well as graph clustering.
The impressive literature on the model is surveyed in~\cite{Abbe_2017, Moore_2017}.
Like the Ising model, the stochastic block model comes in two variants.
In the assortative version edges are more likely join vertices with the same spin while in the disassortative model edges are more likely to occur between vertices with opposite spins.
Thus, the disassortative variant resembles the Ising antiferromagnet.

Formally the $d$-regular disassortative stochastic block model is defined by way of the following experiment.
Let $V_n=\{v_1,\ldots,v_n\}$ be a set of $n$ vertices.
In a first step we draw a spin assignment $\vec\sigma^*\in\{\pm1\}^{V_n}$ uniformly at random.
Subsequently we draw a $d$-regular graph $\GG^*=\GG^*(\vec\sigma^*)$ from the distribution
\begin{align}\label{eqsbm}
\pr\brk{\GG^*=G\mid\vec\sigma^*}&\propto\exp(-\beta\cH_G(\vec\sigma^*)).
\end{align}
Thus, the probability that a given $d$-regular graph $G$ comes up is proportional to the Boltzmann weight $\exp(-\beta\cH_G(\vec\sigma^*))$ of the `ground truth' $\vec\sigma^*$. 

The obvious question is whether the bias introduced by \eqref{eqsbm} has a discernible impact on the distribution of the graph.
In other words, is it possible to tell $\GG^*$ apart from the `null model' $\GG$?
To formalise this we use the Kullback-Leibler divergence of $\GG^*$ from $\GG$, 
\begin{align*}
	\KL{\GG^*}{\GG}&=\sum_G\pr\brk{\GG^*=G}\log\frac{\pr\brk{\GG^*=G}}{\pr\brk{\GG=G}}.
\end{align*}
The Kullback-Leibler divergence is an information-theoretic potential that gauges the difference between random objects.
Specifically, if $\KL{\GG^*}{\GG}=o(n)$ then extensive observables such as the maximum cut value or the logarithm of the partition function in the two random graph models are asymptotically equal
\cite{MMbook}.
By contrast, if $\KL{\GG^*}{\GG}=\Omega(n)$, then one can tell the two random graph models apart by calculating the partition function
\cite{CKPZ}.
In particular, in the latter case there exists a (not necessarily efficient) algorithm $\mathtt A$ that given a graph $G$ outputs $\mathtt A(G)\in\{0,1\}$ such that
\begin{align}\label{eqA}
\lim_{n\to\infty}\pr\brk{\mathtt A(\GG)=0}&=\lim_{n\to\infty}\pr\brk{\mathtt A(\GG^*)=1}=1.
\end{align}
Hence, $\mathtt A$, the essence of which is calculating the partition function, distinguishes the stochastic block model from the null model with high probability. 

\begin{theorem}\label{Thm_sbm}
For any $d\geq3$ the following are true.
\begin{enumerate}[(i)]
	\item If $\beta<\bstar(d)$, then $\limn \KL{\GG^*}{\GG}/n=0$ and $\limn \KL{\GG}{\GG^*}/n=0$.
	\item If $\beta>\bstar(d)$, then $\limn \KL{\GG^*}{\GG}/n>0$ and $\limn \KL{\GG}{\GG^*}/n>0$. 
\end{enumerate}
\end{theorem}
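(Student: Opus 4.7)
My plan is to reduce both KL divergences in Theorem~\ref{Thm_sbm} to the observables $\Erw[\log Z_{\GG,\b}]$ and $\log\Erw[Z_{\GG,\b}]$ whose asymptotics are pinned down by Theorem~\ref{thm_elogz}, and then, for the planted side, to convert a probabilistic separation of the two ensembles into a quantitative KL lower bound. The starting point is the Radon--Nikodym identity
$$\frac{\pr[\GG^*=G]}{\pr[\GG=G]}=\frac{Z_{G,\b}}{\Erw[Z_{\GG,\b}]}\cdot\exp(o(n)),$$
obtained by averaging \eqref{eqsbm} over $\vec\sigma^*$: the normaliser $\sum_{G'}\exp(-\b\cH_{G'}(\sigma))$ depends on $\sigma$ only through its magnetisation (by vertex-permutation symmetry of the set of $d$-regular graphs), and unbalanced $\sigma$ contribute an exponentially smaller share. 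Taking logs and averaging under $\GG$ yields
$$\KL{\GG}{\GG^*}/n=\bigl(\log\Erw[Z_{\GG,\b}]-\Erw[\log Z_{\GG,\b}]\bigr)/n+o(1)\longrightarrow\log 2+\tfrac{d}{2}\log\tfrac{1+\eul^{-\b}}{2}-\Phi_d(\b),$$
so \eqref{eqJensen} and Theorem~\ref{thm_elogz} immediately deliver both parts of the statement for $\KL{\GG}{\GG^*}$.

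For $\KL{\GG^*}{\GG}/n=\Erw_{\GG^*}[\log Z_{\GG^*,\b}]/n-\log\Erw[Z_{\GG,\b}]/n+o(1)$ the decisive case is $\b>\bstar$. Set $\eta:=\log 2+\tfrac{d}{2}\log\tfrac{1+\eul^{-\b}}{2}-\Phi_d(\b)>0$ and consider the event $A_n:=\{G:\log Z_{G,\b}\ge n(\Phi_d(\b)+\eta/2)\}$. Markov applied to $Z_{\GG,\b}$ under the planted law gives
$$\pr_{\GG^*}[A_n^c]=\Erw_\GG\bigl[Z_{\GG,\b}\,\ind{Z_{\GG,\b}<\exp(n(\Phi_d(\b)+\eta/2))}\bigr]\big/\Erw[Z_{\GG,\b}]\le\exp(-n\eta/2),$$
so $A_n$ is asymptotically certain under $\GG^*$. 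On the other hand, an edge-switching Azuma bound applied to the $O(\b)$-Lipschitz functional $\log Z_{\GG,\b}$ yields $\pr_\GG[A_n]\le 2\exp(-cn)$ for some $c=c(\b,\eta)>0$, once one invokes $\Erw_\GG[\log Z_{\GG,\b}]/n\to\Phi_d(\b)$. The elementary two-point inequality $\KL{\mu}{\nu}\ge\mu(A)\log(\mu(A)/\nu(A))-\log 2$ then produces $\liminfn\KL{\GG^*}{\GG}/n\ge c>0$.

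The subcritical case $\b<\bstar$ is handled via the identity
$$\KL{\GG^*}{\GG}+\KL{\GG}{\GG^*}=\mathrm{Cov}_\GG\bigl[Z_{\GG,\b}\big/\Erw[Z_{\GG,\b}],\,\log Z_{\GG,\b}\bigr],$$
bounded by Cauchy--Schwarz using $\Var_\GG[\log Z_{\GG,\b}]=O(n)$ (Azuma again) and the boundedness of the second-moment ratio $\Erw[Z_{\GG,\b}^2]/\Erw[Z_{\GG,\b}]^2$, which follows in the replica-symmetric regime from the moment estimates underlying Theorem~\ref{thm_elogz}(i); nonnegativity of each summand then forces $\KL{\GG^*}{\GG}/n\to 0$. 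The main obstacle is the $\b>\bstar$ step: transmuting the strict but asymptotically small gap between $\log 2+\tfrac{d}{2}\log\tfrac{1+\eul^{-\b}}{2}$ and $\Phi_d(\b)$ into an $\Omega(n)$ lower bound hinges on exponential (rather than merely polynomial) concentration of $\log Z_{\GG,\b}$ about its mean under $\GG$, which in turn rests on the $O(\b)$ Lipschitz constant of $\log Z$ under single edge switchings of $\GG(n,d)$. A secondary technical point is that extracting the $O(1)$ second-moment ratio in the subcritical regime requires the quantitative form of the moment estimates built during the proof of Theorem~\ref{thm_elogz}(i).
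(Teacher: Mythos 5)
Your route is genuinely different from the paper's (which disposes of \Thm~\ref{Thm_sbm} by citing \Thm~\ref{thm_elogz} together with Theorem~17.1 of \cite{Coja_2020}), and parts of it are sound, but there are two problems. First, your opening Radon--Nikodym identity $\pr[\GG^*=G]/\pr[\GG=G]=\exp(o(n))\,Z_{G,\b}/\Erw[Z_{\GG,\b}]$ is asserted, not proved. Averaging \eqref{eqsbm} over $\vec\sigma^*$ gives $\pr[\GG^*=G]\propto\sum_\sigma\exp(-\b\cH_G(\sigma))/Z_n(\sigma)$ with $Z_n(\sigma)=\sum_{G'}\exp(-\b\cH_{G'}(\sigma))$, and the denominators $Z_n(\sigma)$ vary by $\exp(\Theta(n))$ across magnetisation classes; so unbalanced $\sigma$ are \emph{up}-weighted relative to their Boltzmann weight, and for atypical $G$ they need not contribute an exponentially smaller share. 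The paper itself only establishes event-level correspondences of the form $\Theta(\,\cdot\,)+o(1)$ (\Lem~\ref{Lemma_null}, \Cor~\ref{Cor_null}); the uniform pointwise $\exp(o(n))$ statement is essentially the content of the black-box theorem the paper cites, and it needs an argument (or at least a restriction to typical graphs combined with the uniform $O(n)$ bound on the log-ratio).

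The second issue is a genuine error: your treatment of $\KL{\GG^*}{\GG}$ for $\b<\bstar(d)$ rests on ``boundedness of the second-moment ratio $\Erw[Z_{\GG,\b}^2]/\Erw[Z_{\GG,\b}]^2$'' in the replica-symmetric regime, but this is exactly what fails for $\bdag(d)<\b<\bstar(d)$: there the unconditioned ratio is $\exp(\Omega(n))$ (the maximiser of $f_d(\,\cdot\,,\b)$ in \eqref{eqTech5a} moves away from $\alpha=0$; see Figure~\ref{fig_secondmoment}), which is precisely why the paper had to work with the truncated moment $\Erw[Z_{\GG,\b}^2\vecone\{\cO\}]$ and spatial mixing. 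Even the truncated bound of \Lem~\ref{Lemma_O2} is only $\Erw[Z]^2\exp(o(n))$, so Cauchy--Schwarz gives $|\mathrm{Cov}|\le\exp(o(n))\sqrt{O(n)}$, which is not $o(n)$; the covariance identity therefore yields nothing in the regime where it is needed. The repair is to avoid the second moment altogether: with the size-biased identification, $\KL{\GG^*}{\GG}/n=\Erw[\log Z_{\GG^*,\b}]/n-\log\Erw[Z_{\GG,\b}]/n+o(1)$, and for $\b<\bstar(d)$ \Lem~\ref{Lemma_GG*} (equivalently \Lem~\ref{lem_Bethe}) combined with part (i) of \Thm~\ref{thm_elogz} gives $\Erw[\log Z_{\GG^*,\b}]/n\to\log2+\frac d2\log\frac{1+\eul^{-\b}}2$, whence the limit is $0$. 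Your supercritical arguments (the Markov/size-bias bound under $\GG^*$, the Azuma tail under $\GG$, and the two-point KL inequality) are fine modulo the first point.
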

By definition, then, $\bstar$ is the information-theoretic threshold of the stochastic block model.

\section{Techniques} \label{sec_techniques}

\noindent
This section contains a survey of the proofs of the main results and the techniques they are based on.
We begin with the proof of the first part of \Thm~\ref{thm_elogz}, which combines moment computations with a spatial mixing argument.
To motivate this combination we first discuss the \ER\ case, in which a straightforward moment calculation does the trick. Subsequently we discuss the proof of the second part of \Thm~\ref{thm_elogz}, which relies on the connection between the Ising model and the stochastic block model.
This connection also shows how \Thm~\ref{Thm_sbm} follows from \Thm~\ref{thm_elogz}.
The final subsection then deals with the the proof of \Thm~\ref{Thm_1rsb}, based on the interpolation method.

\subsection{The second moment method}

To get started, we will compute the typical value of the Ising partition function using the method of moments for the \Erdos-\Renyi~model.
To this end, we reproduce the calculation 
by Mossel, Neeman and Sly~\cite{Mossel_2015} for the \ER\ model $\GER$
where $m=dn/2$ edges are drawn uniformly at random.
(We skip their supplementing of the second moment method with small subgraph conditioning for increased precision.)
We will show why this does not directly extend to the random regular model.

For the first moment we simply obtain
\begin{align}\label{eqTech1}
\ex\brk{Z_{\GER,\b}}&=\sum_{\sigma\in\{\pm1\}^{V_n}}\Erw\brk{\exp(-\beta\cH_{\GER}(\sigma))}
=\sum_{\sigma\in\{\pm1\}^{V_n}}\bc{1-\frac{1-\eul^{-\beta}}{n^2}\sum_{i,j=1}^n\vecone\{\sigma_{v_i}=\sigma_{v_j}\}}^{m+o(n)}.
\end{align}
The second equality holds because the edges of $\GER$ are asymptotically independent.
A moment's reflection reveals that the expression in the braces is maximised by $\sigma$ such that $\sum_{i}\sigma_{v_i}=o(n)$.
Combinatorially this means that $\sigma$ corresponds to an approximately balanced cut.
Since there are $2^{n+o(n)}$ such $\sigma$, \eqref{eqTech1} yields
\begin{align}\label{eqTech2}
\ex\brk{Z_{\GER,\b}}&=2^{n+o(n)}\bcfr{1+\eul^{-\b}}{2}^{dn/2}
 =\exp\bc{n\bc{\Big(1-\frac{d}{2}\Big)\log(2)+\frac{d}{2}\log(1+\eul^{-\b})+o(1)}}.
\end{align}

Calculating the second moment is similarly straightforward.
Indeed, we obtain
\begin{align}\label{eqTech3}
\ex&\brk{Z_{\GER,\b}^2}=\sum_{\sigma,\sigma'\in\{\pm1\}^{V_n}}\Erw\brk{\exp(-\beta\cH_{\GER}(\sigma)-\beta\cH_{\GER}(\sigma'))}\\
&=\sum_{\sigma,\sigma'}\bc{1-\frac{1}{n^2}\sum_{i,j=1}^n\bc{1-\eul^{-\beta}}\bc{\vecone\{\sigma_{v_i}=\sigma_{v_j}\}+\vecone\{\sigma'_{v_i}=\sigma'_{v_j}\}}
			-\bc{1-\eul^{-\b}}^2\vecone\{\sigma_{v_i}=\sigma_{v_j}\wedge\sigma'_{v_i}=\sigma'_{v_j}\}}^{m+o(n)}.\nonumber
\end{align}
As in the first moment calculation it is easy to see that asymptotically balanced $\sigma,\sigma'$ dominate.
Moreover, rearranging the sum according to the inner product $a=\sigma\cdot\sigma'$, we obtain
\begin{align}
\ex\brk{Z_{\GER,\b}^2}
&=\sum_{a=-n}^n\binom{n}{(n-a)/4,(n-a)/4,(n+a)/4,(n+a)/4}\bc{\frac{(1+\eul^{-\b})^2}{4}+\bcfr an^2\frac{(1-\eul^{-\b})^2}4}^{m+o(n)}.\label{eqTech4}
\end{align}
Introducing $\alpha=a/n$ and the entropy function $H(p)=-p\log p-(1-p)\log(1-p)$ for $0<p<1$, we can apply Stirling's formula to simplify \eqref{eqTech4} to
\begin{align}\label{eqTech5}
\ex\brk{Z_{\GER,\b}^2}&=\exp\bc{n\max_{-1<\alpha<1}f_d(\alpha,\beta)+o(n)},\qquad\mbox{where}\\
f_{d}(\alpha,\beta)&=(1-d)\log(2)+H((1+\alpha)/2)+\frac{d}{2}\log\bc{(1+\eul^{-\b})^2+\alpha^2(1-\eul^{-\b})^2}.\label{eqTech5a}
\end{align}

Substituting $\alpha=0$ into \eqref{eqTech5a} yields
\begin{align}\label{eqTech5_2}
f_{d}(0,\beta)&=(2-d)\log(2)+d\log\bc{1+\eul^{-\b}}
\end{align}
which is twice the exponent from \eqref{eqTech2}.
Hence, if $f_d(\a,\b)$ attains its maximum at $\alpha=0$, then \eqref{eqTech1} and \eqref{eqTech5} show that $\ex[Z_{\GER,\b}^2]/\ex[Z_{\GER,\b}]^2=\exp(o(n))$.
Routine concentration arguments therefore apply and show that $Z_{\GER,\b}$ concentrates about its expectation.
In particular, we obtain 
\begin{align}\label{eqTech6}
	\lim_{n\to\infty}\frac{1}{n}\Erw\brk{\log Z_{\GER,\b}}&=\lim_{n\to\infty}\frac{1}{n}\log \Erw\brk{Z_{\GER,\b}}=\log2+\frac{d}{2}\log\frac{1+\eul^{-\b}}{2}\qquad\mbox{if }\max_{-1<\alpha<1}f_d(\alpha,\beta)=f_d(0,\b).
\end{align}
By contrast, if the maximum in \eqref{eqTech5} is attained at $\alpha\neq0$, then the second moment exceeds the square of the first moment exponentially.
Hence, the moment method succeeds iff $f_d(\alpha,\beta)$ attains its maximum at $\alpha=0$. 

Whether or not this is the case depends on the value of $\beta$.
Specifically, for a given $d\geq3$ the function $f_d(\alpha,\beta)$ is maximised at $\alpha=0$, and \eqref{eqTech6} is satisfied, if $$\beta\leq\bdag(d) =  \log{\frac{\sqrt{d}+1}{\sqrt{d}-1}};$$
as mentioned above, this discovery belongs to Mossel, Neeman and Sly~\cite{Mossel_2015}.
Note that $\bstar(d)=\bdag(d-1) >\bdag(d)$.
Conversely, results from~\cite{Coja_2018,Mossel_2015} imply that
\begin{align}\label{eqTech7}
\lim_{n\to\infty}\frac{1}{n}\Erw\brk{\log Z_{\GER,\b}}&<\log2+\frac{d}{2}\log\frac{1+\eul^{-\b}}{2} \qquad \mbox{for $\b>\bdag(d)$}.
\end{align}
Hence, $\bdag(d)$ marks the replica symmetry breaking threshold for the Ising model on the \ER\ graph.

To what extent do these considerations carry over to the random regular graph $\GG(n,d)$?
The following lemma shows that the first moment for random regular graphs is about the same as in the \ER\ case; 
the calculations, given in~\cite{Mossel_2015}, are similar to those above.

\begin{lemma}\label{Cor_simple*}
	For any $d\geq3,\b>0$ we have 
\begin{align}\label{eqTech8}
\ex[Z_{\GG,\b}]=\Theta\bc{ 2^n\bc{ \frac{ 1+\eul^{\b} }2 }^{dn/2} }.
\end{align}
\end{lemma}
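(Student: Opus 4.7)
\emph{Remark on the typeset statement.} With the Hamiltonian \eqref{eqHam} counting monochromatic edges and the Boltzmann weight \eqref{eq_boltzmann} assigning $\eul^{-\b}$ per such edge, the correct first-moment base is $(1+\eul^{-\b})/2$, not $(1+\eul^{\b})/2$. This is also the base appearing in \eqref{eqJensen} and in the \ER\ computation \eqref{eqTech2}. The two normalisations differ by the deterministic factor $\eul^{\b dn/2}$ coming from the identity $\cH_G(\sigma) + \#\{\text{cut edges}\} = dn/2$ in the $d$-regular case; I plan the proof for $\ex[Z_{\GG,\b}] = \Theta\bc{2^n((1+\eul^{-\b})/2)^{dn/2}}$, which is the formula actually invoked in the body of the paper.

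The plan is to write by linearity
\[
\ex[Z_{\GG,\b}] = \sum_{\sigma\in\{\pm1\}^{V_n}} \Erw\brk{\exp(-\b\cH_{\GG}(\sigma))},
\]
so the inner expectation depends on $\sigma$ only through the imbalance $\a = \a(\sigma) = n^{-1}\sum_v \sigma_v$, equivalently through $n_+ = n(1+\a)/2$. First I would realise $\GG$ via the configuration model on $dn$ half-edges: for fixed $\sigma$ of composition $(n_+, n_-)$, the expected Boltzmann weight is a sum over edge-type triples $(e_{++}, e_{--}, e_{+-})$ satisfying $2e_{++}+e_{+-} = dn_+$ and $2e_{--}+e_{+-} = dn_-$, each weighted by $\eul^{-\b(e_{++}+e_{--})}$ times the explicit double-factorial count of pairings producing that triple, divided by $(dn-1)!!$. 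Stirling together with a second-order expansion around the saddle $e_{++}\sim dn_+^2/(2n)$, $e_{--}\sim dn_-^2/(2n)$, $e_{+-}\sim dn_+n_-/n$ yields
\[
\Erw\brk{\exp(-\b\cH_{\GG}(\sigma))} = \Theta(1)\cdot \bcfr{1+\eul^{-\b}}{2}^{dn/2}\exp\bc{-\tfrac12 c_\b \a^2 n}
\]
uniformly over $\a = O(n^{-1/2})$, where $c_\b = d(1-\eul^{-\b})/(1+\eul^{-\b}) > 0$.

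Summing over $\sigma$ by grouping on $n_+$ and applying Stirling to $\binom{n}{n_+}\sim\frac{2^n}{\sqrt{\pi n/2}}\exp(-n\a^2/2)$ produces a one-dimensional Gaussian sum with exponent $-\tfrac12(1+c_\b)\a^2 n$, whose contribution is $\Theta(\sqrt n)$; multiplied by the $1/\sqrt n$ prefactor from the binomial this gives the claimed $\Theta\bc{2^n((1+\eul^{-\b})/2)^{dn/2}}$. Finally I would transfer from the configuration model to the uniform simple random $d$-regular graph via Bollob\'as's classical estimate that the pairing is simple with probability bounded away from $0$ and $1$ for fixed $d\ge3$; this changes $\ex[Z_{\GG,\b}]$ by at most a $\Theta(1)$ factor. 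The main technical obstacle is keeping Stirling's expansion of the pairing count uniform in $\a$ on the scale $O(n^{-1/2})$ (so that both the binomial decay and the Gaussian weight are captured at the correct order) while retaining adequate tail bounds for larger $\a$ so that those contributions are negligible; all bounds needed are classical (cf.\ \cite{Mossel_2015,Coja_2016}).
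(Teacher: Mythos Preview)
Your pairing-model computation of $\ex[Z_{\G,\b}]$ is the paper's route (Lemma~\ref{Lemma_mean}): Stirling on the double-factorial edge-count, optimisation over the edge-type profile $\mu$, then a Gaussian sum over $\alpha$. One slip: the maximiser of the \emph{weighted} sum $\pr[\vec\mu=\mu]\,\eul^{-\b dn(\mu_{11}+\mu_{-1-1})/2}$ is not the product measure $e_{st}\sim dn_sn_t/(2n)$ you wrote but the $\b$-tilted $\mu^*$ of~\eqref{eqvarphimax}; what you stated is the $\b=0$ saddle. If you truly do only a second-order expansion about that point you will miss the $((1+\eul^{-\b})/2)^{dn/2}$ base entirely. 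This is easily repaired by locating the correct stationary point first.

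The genuine gap is the last step. Writing $\ex[Z_{\GG,\b}]=\ex[Z_{\G,\b}\vecone\{\cS\}]/\pr[\G\in\cS]$, the classical estimate $\pr[\G\in\cS]=\Theta(1)$ delivers the upper bound $\ex[Z_{\GG,\b}]=O(\ex[Z_{\G,\b}])$ but \emph{not} the matching lower bound: nothing you have said excludes $\ex[Z_{\G,\b}\vecone\{\cS\}]=o(\ex[Z_{\G,\b}])$, i.e.\ that the first moment is carried by multigraphs. What is needed is $\pr[\cS]=\Omega(1)$ under the \emph{size-biased} law $\dd\pr^\psi\propto Z_{\G,\b}\,\dd\pr$, which is exactly the planted model $\G^*$ of~\eqref{eqG*}. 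The paper closes this via Corollary~\ref{Cor_null} together with Lemma~\ref{Lemma_simple*}, a Poisson moment calculation for self-loops and double edges in $\G^*$ (not $\G$): the asymptotic mean number of self-loops is $(d-1)/(1+\eul^{\b})$ rather than $(d-1)/2$, and similarly for double edges, but the limits are still finite so $\pr[\G^*\in\cS]=\Omega(1)$. This is a short argument, but it cannot be replaced by a citation to Bollob\'as for the null model.
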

For the second moment, the expression from \eqref{eqTech5} for the \Erdos-\Renyi\ model carries over to the random regular graph and yields the upper bound
\begin{align}
\ex\brk{Z_{\GG(n,d),\b}^2}&\leq \exp\bc{n\max_{-1<\alpha<1}f_d(\alpha,\beta)+o(n)} \label{eqTech9}.
\end{align}
The fact that the bound extends to random regular graphs may not appear entirely immediate; the analytic explanation derives from the convexity of the Kullback-Leibler divergence \cite{Achlioptas_2004, Coja_2016}.
A similar trick has been applied with some success to various random regular graph problems, notably graph colouring \cite{Achlioptas_2004}.
Unfortunately, in our case the second moment trick only yields the desired solution for $\beta < \bdag(d)$,
while \Thm~\ref{thm_elogz} requires it for all $\beta < \bstar(d)$.
Indeed, for $\beta > \bdag(d)$ the trick fails in a rather spectacular way: once $\beta$ crosses above $\bdag(d)$ the value $\alpha=0$ turns from a global maximum of the function $f_d(\alpha, \beta)$ into a local minimum! Figure~\ref{fig_secondmoment} provides an illustration.
Thus, we have to turn to other means to establish the first part of \Thm~\ref{thm_elogz}, which we explore next.

\begin{figure}
\centering
\begin{minipage}{.3\textwidth}
  \centering
  \includegraphics[width=0.9\linewidth]{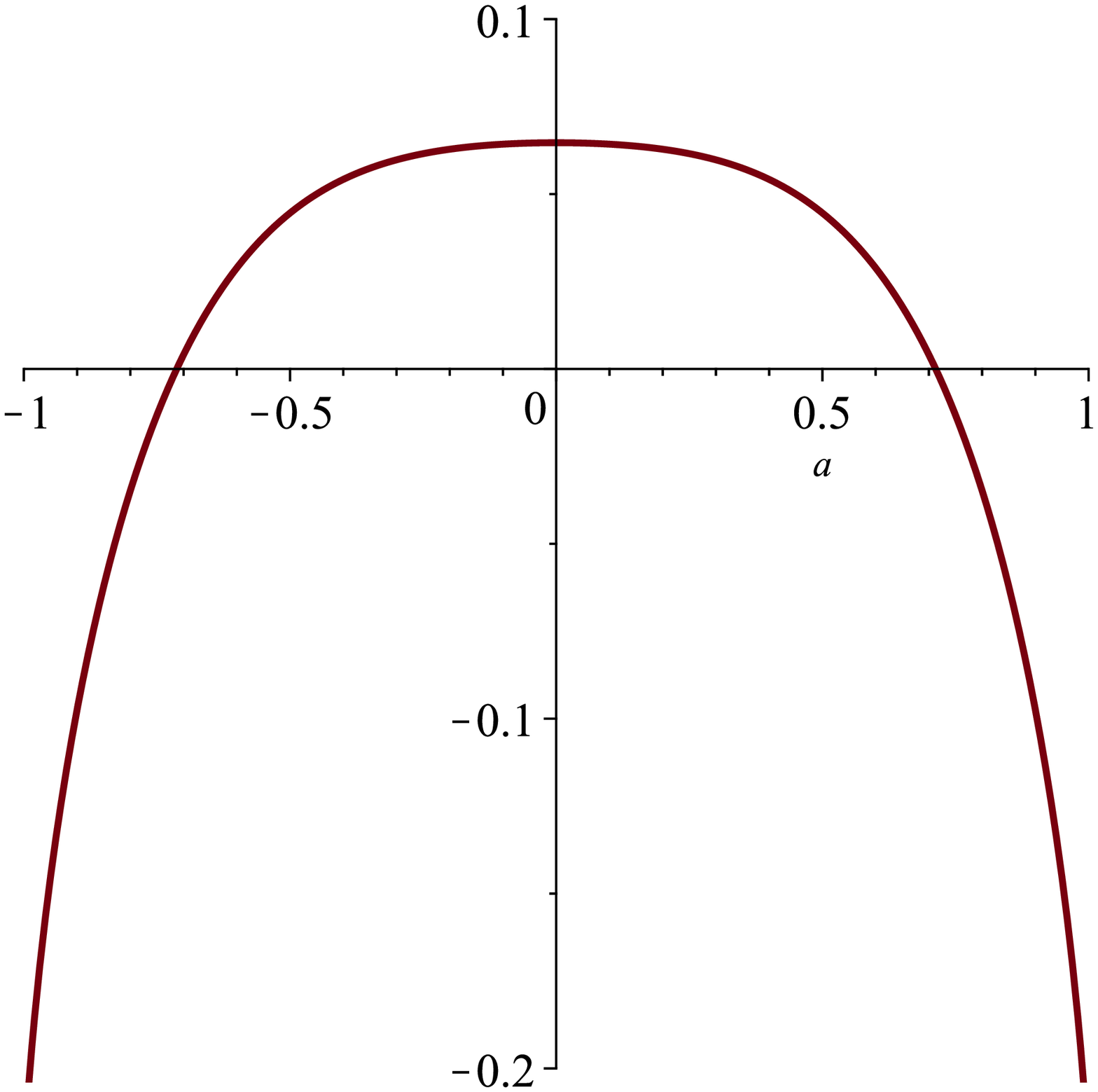}
\end{minipage}%
\begin{minipage}{.3\textwidth}
  \centering
  \includegraphics[width=0.9\linewidth]{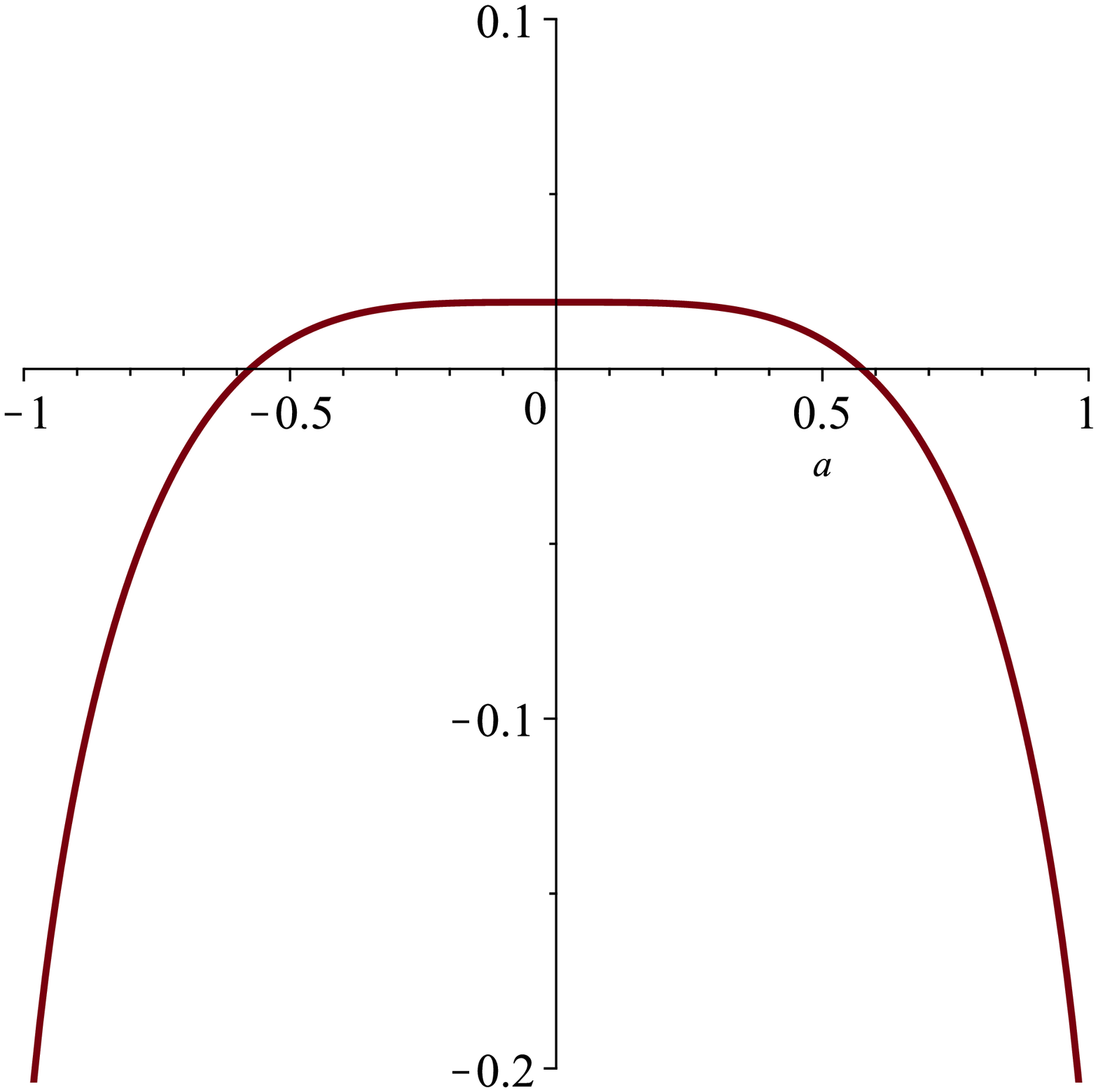}
\end{minipage}
\begin{minipage}{.3\textwidth}
  \centering
  \includegraphics[width=0.9\linewidth]{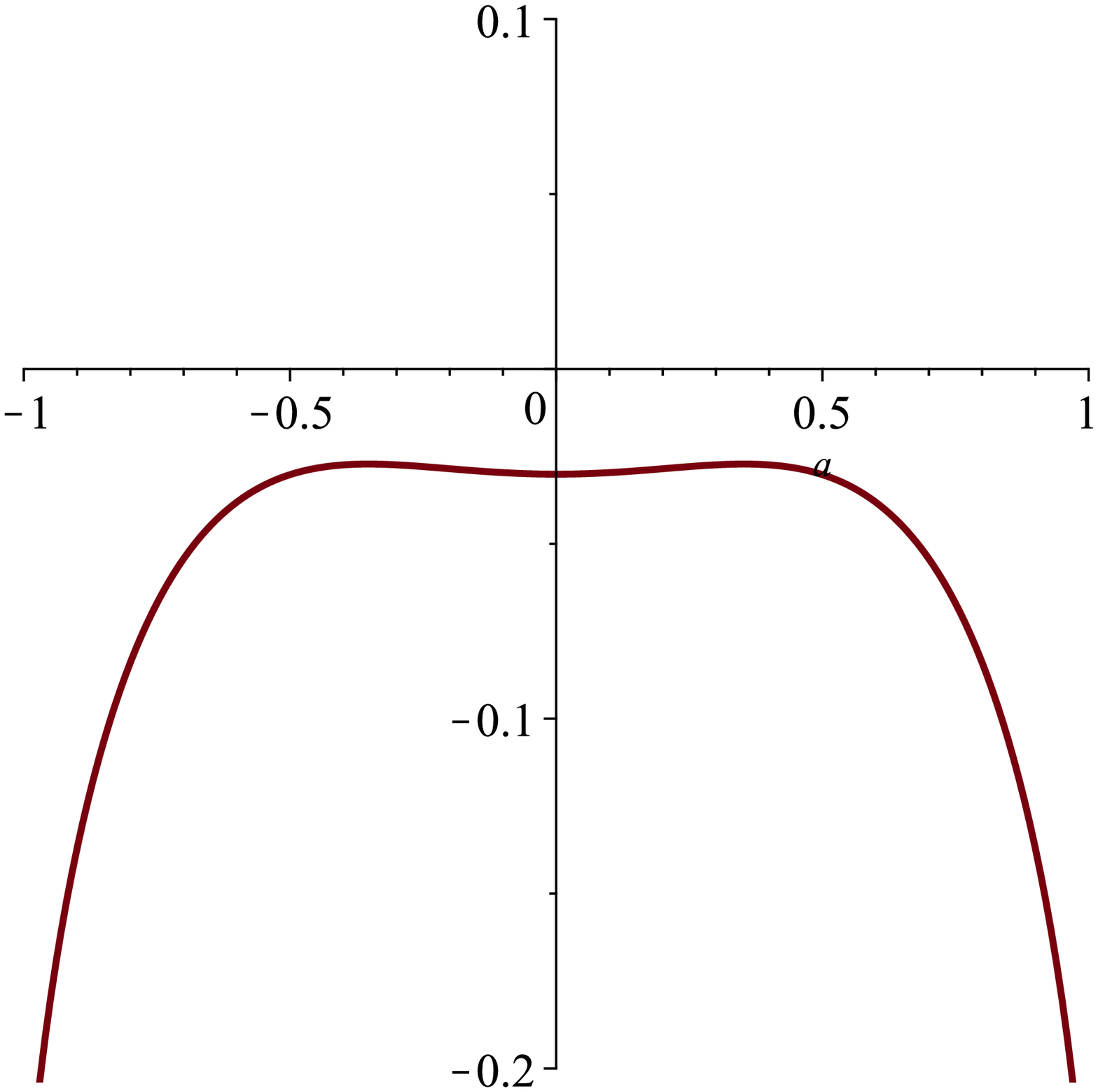}
\end{minipage}
\caption{The function $f_d(\alpha,\b)$ for $d=3$ and $\beta=1.25$ (left), $\beta=1.32$ (middle) and $\beta=1.40$ (right). For $d=3$, we have $\bdag \approx 1.32$ and $\bstar \approx 1.76$.}
\label{fig_secondmoment}
\end{figure}
 
\subsection{Broadcasting and non-reconstruction}\label{Sec_bnr}
Our approach towards proving the first part of \Thm~\ref{thm_elogz} relies on combining spatial mixing arguments with the method of moments. To be precise,
we will exhibit an event $\cO$ such that for all $\b<\b^*(d)$,
\begin{align}\label{eqO}
\ex\brk{Z_{\GG(n,d),\b}\vecone\cbc{\cO}}&=\Theta(\ex\brk{Z_{\GG(n,d),\b}})=\Theta\bc{2^n\bcfr{1+\eul^{-\b}}{2}^{dn/2}},&
\ex\brk{Z_{\GG(n,d),\b}^2\vecone\cbc{\cO}}&=4^{n+o(n)}\bcfr{1+\eul^{-\b}}{2}^{dn}.
\end{align}
Together with routine concentration arguments \eqref{eqO} will imply the first part of \Thm~\ref{thm_elogz}.

To elaborate, the event $\cO$ concerns the relative location of two typical samples from the Boltzmann distribution.
Hence, for a graph $G$ let $\vec\sigma_G,\vec\sigma'_G$ denote two independent samples from $\mu_{G,\b}$.
Then for a sequence $\eps_n=o(1)$ that tends to zero slowly enough (and that we will specify precisely in due course) we let
\begin{align}\label{eqOdef}
\cO&=\cbc{\ex\brk{|\vec\sigma_{\GG}\cdot\vec\sigma_{\GG}'| \mid\GG}<\eps_n n}.
\end{align}
Thus, $\cO$ is the event that two typical samples from the Boltzmann distribution are nearly orthogonal.
Since the combinatorial interpretation of $\alpha$ in \eqref{eqTech9} is to pinpoint the value of the inner product of spin configurations that renders the largest contribution, one might reasonably hope that conditioning on $\cO$ will eliminate the need for taking values $\alpha\neq0$ into consideration.
Indeed, the proof of the second part of \eqref{eqO} will be relatively straightforward.
Unfortunately, it turns out that the same cannot quite be said of the proof of the first part.

\begin{proposition} \label{prop_recon_energy}
The event $\cO$ from \eqref{eqOdef} satisfies \eqref{eqO} for all $d\geq3$, $\b\leq\b^*(d)$.
\end{proposition}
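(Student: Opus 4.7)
The plan is to prove the two estimates in \eqref{eqO} by different means. The second-moment bound is essentially a truncated version of \eqref{eqTech9}: since $\cO$ is a graph event, every $G\in\cO$ satisfies $\ex[|\vec\sigma_G\cdot\vec\sigma_G'|\mid G] < \eps_n n$, so Markov's inequality applied to $\mu_{G,\b}\otimes\mu_{G,\b}$ yields
\[
Z_{G,\b}^2 \leq (1+o(1)) \sum_{|\sigma\cdot\sigma'|\leq\sqrt{\eps_n}\,n} \exp(-\b\cH_G(\sigma)-\b\cH_G(\sigma')).
\]
Taking expectations over $\GG$ and applying the per-overlap version of \eqref{eqTech9}, a Taylor expansion $f_d(\alpha,\b)=f_d(0,\b)+O(\alpha^2)$ yields $\max_{|\alpha|\leq\sqrt{\eps_n}}f_d(\alpha,\b)=f_d(0,\b)+o(1)$ as long as $\eps_n\to 0$ slowly enough, delivering the claimed $4^{n+o(n)}((1+\eul^{-\b})/2)^{dn}$.

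The first-moment bound is the delicate half. It rests on the identity
\[
\frac{\ex[Z_{\GG,\b}\vecone\{\cO\}]}{\ex[Z_{\GG,\b}]} = \pr[\GG^{\mathrm{pl}}\in\cO],
\]
where $\GG^{\mathrm{pl}}$ has distribution $\pr[\GG^{\mathrm{pl}}=G]\propto Z_{G,\b}\pr[\GG=G]$, so the claim reduces to $\pr[\GG^{\mathrm{pl}}\in\cO]=1-o(1)$. Equivalently, $\GG^{\mathrm{pl}}$ is the first marginal of the disassortative SBM \eqref{eqsbm}: first draw $\vec\sigma^*$ uniform, then $\GG^*$ with weight $\propto e^{-\b\cH_{\GG^*}(\vec\sigma^*)}$. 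Conditional on $\GG^*$, the planted $\vec\sigma^*$ is itself distributed according to $\mu_{\GG^*,\b}$, hence exchangeable with a fresh Boltzmann sample $\vec\sigma$. So it suffices to prove the planted-overlap bound $\ex[|\vec\sigma\cdot\vec\sigma^*|\mid\GG^*]=o(n)$ w.h.p., and by Jensen together with the conditional independence of $\vec\sigma$ and $\vec\sigma^*$ given $\GG^*$,
\[
\ex[|\vec\sigma\cdot\vec\sigma^*|\mid\GG^*]^2 \leq \ex[(\vec\sigma\cdot\vec\sigma^*)^2\mid\GG^*]=\sum_{v,w}\ex[\vec\sigma_v\vec\sigma_w\mid\GG^*]^2,
\]
reducing the target to $\sum_{v,w}\ex[\vec\sigma_v\vec\sigma_w\mid\GG^*]^2=o(n^2)$ w.h.p.

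For a uniformly random pair $(v,w)$ the graph distance in $\GG^*$ is $\Omega(\log n)$ w.h.p., and the joint depth-$\ell$ neighborhoods of $(v,w)$ converge in the local weak sense to two independent $d$-regular broadcasting trees governed by the antiferromagnetic Ising channel. Kesten-Stigum non-reconstruction for this channel, valid precisely for $\b<\bstar(d)$, then forces $\ex[\vec\sigma_v\vec\sigma_w\mid\GG^*]\to 0$ in probability for typical pairs, and bounded convergence closes the bound. \emph{The main obstacle} is transferring the tree non-reconstruction statement to genuine pairwise decorrelation on $\GG^*$: the planted measure reweights $\GG$ by the global factor $Z_{\GG,\b}$, so one must certify that $\GG^*$ retains the $d$-regular tree as its local weak limit with the correct broadcast dynamics (expected by contiguity in the replica-symmetric phase, but requiring proof), and one must control the boundary condition contributed by the rest of the graph in the two-point function. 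Since FKG and monotone couplings fail for the antiferromagnet, the argument most likely proceeds through an explicit contraction estimate for the tree two-point recursion that is uniform in the boundary condition throughout $\b<\bstar(d)$, in the spirit of the Mossel-Neeman-Sly analysis of the corresponding SBM on Erd\H{o}s-R\'enyi graphs.
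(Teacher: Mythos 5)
Your overall architecture coincides with the paper's (truncated first moment via the planted model plus non-reconstruction; truncated second moment via an overlap-restricted count whose exponent is $f_d(\alpha,\b)$ near $\alpha=0$), but the step you yourself flag as ``the main obstacle'' is exactly the hard content of the proposition, and the route you sketch for closing it would fail. A contraction estimate for the tree recursion that is \emph{uniform in the boundary condition} is a uniqueness-type statement: for the Ising antiferromagnet on the $d$-regular tree, worst-case-boundary decay holds only for $(d-1)\tanh(\b/2)\leq1$, whereas $\bstar(d)$ is the Kesten--Stigum point $(d-1)\tanh^2(\b/2)=1$; between the Gibbs-uniqueness threshold and $\bstar(d)$ (the gap visible in Table~\ref{Tab_mc}) there exist boundary conditions that bias the root, so no contraction uniform in the boundary exists throughout $\b<\bstar(d)$. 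Only the averaged statement is available there, namely non-reconstruction with the boundary drawn from the broadcast itself (\Lem~\ref{lem_reconstruction}), and exploiting it is where the actual work lies: the paper couples the planted configuration on the depth-$\ell$ ball of $\G^*$ with the broadcast process by an explicit pairing-model computation (\Lem~\ref{Lemma_broadcast_coupling}), deduces that the conditional Boltzmann marginal of a typical vertex given the \emph{planted} boundary at distance $\ell$ is close to $1/2$ on average (\Cor~\ref{Cor_broadcast_coupling}), and then converts this into near-orthogonality of two independent Boltzmann samples by a re-sampling/exchangeability argument plus Chebyshev (\Lem~\ref{Cor_O}). Your target $\sum_{v,w}\Erw[\vec\sigma_v\vec\sigma_w\mid\GG^*]^2=o(n^2)$ is a reasonable reformulation, but ``local weak convergence plus KS non-reconstruction forces it'' is precisely the assertion that needs proof: the conditional expectations depend on the whole graph, not on bounded neighbourhoods, and the transfer mechanism you propose (uniform contraction, FKG-free, MNS-style) is not available in the regime $\b$ between uniqueness and $\bstar(d)$.

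Two secondary points. First, the identities you use as exact --- that the $Z$-size-biased graph law equals the SBM marginal, and that the posterior of $\vec\sigma^*$ given $\GG^*$ is the Boltzmann measure --- only hold approximately here, because the normalisation $\sum_G\exp(-\b\cH_G(\sigma))$ depends on the magnetisation of $\sigma$ on a regular graph; the paper devotes \Lem~\ref{Lemma_null} to the $\Theta(\nix)$ version, splitting according to magnetisation, and a $\Theta$ relation indeed suffices, so this is reparable but should not be asserted as an identity. Second, in the second-moment half, restricting only the overlap is not quite enough: $f_d(\alpha,\b)$ from \eqref{eqTech5a} is the exponent for (nearly) balanced pairs, and at fixed small overlap one must still rule out unbalanced magnetisations. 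The paper extracts near-balance of the samples from $\cO$ as well (Fact~\ref{Fact_perp}) and imposes $|\sigma\cdot\vecone|,|\sigma'\cdot\vecone|\leq\delta n$ alongside $|\sigma\cdot\sigma'|\leq\delta n$ before carrying out the optimisation over edge types in \Lem~\ref{Lemma_Philipp}; your Markov step should be supplemented in the same way (or you must separately show unbalanced types do not dominate at small overlap).
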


The proof of \Prop~\ref{prop_recon_energy} uses two tools: the stochastic block model $\GG^*$ from \eqref{eqsbm} and the analysis of a broadcasting process on the infinite $d$-regular tree from~\cite{Bleher_1995}.
Specifically, the stochastic block model will help to derive the first part of \eqref{eqO}.
Indeed, the definition \eqref{eqsbm} suggests that the probability that $\GG^*=G$ for a given graph $G$ should be roughly proportional to the partition function $Z_{G,\b}$ (see e.g. \cite{CKPZ}).
This is because $G$ has a chance proportional to 
$\sum_{\sigma\in\{\pm1\}^{V_n}} \big( \exp(-\b\cH_G(\sigma)) / \sum_G \exp(-\b\cH_G(\sigma)) \big)$,
and if the denominators were the same over all $\sigma$,
this would be proportional to $Z_{G,\b}=\sum_\sigma\exp(-\b\cH_{G}(\sigma))$.
By symmetry each denominator depends only on the magnetisation of $\sigma$ (the sum of its entries), and summands with magnetisation near 0 are far more frequent, so it is reasonable to hope that they dominate the sum.
Capitalising on this intuition, the following lemma shows that we can make use of $\GG^*$ to establish the first part of \eqref{eqO}.

\begin{lemma}\label{Lemma_hunch}
Let $d\geq3,\b>0$.
If $\pr\brk{\GG^*\in\cO}\sim1$, then $\ex\brk{Z_{\GG,\b}\vecone\cbc{\cO}}=\Theta \bc{\ex\brk{Z_{\GG,\b}}}$.
\end{lemma}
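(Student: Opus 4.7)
My plan is to exploit the natural coupling between the stochastic block model $\GG^*$ and the tilted marginal on spin configurations that arises from the first moment of $Z_{\GG,\b}$. Let $N$ denote the number of $d$-regular graphs on $V_n$ and set $W_\sigma := \sum_G\exp(-\b\cH_G(\sigma)) = N\cdot\ex\brk{\exp(-\b\cH_\GG(\sigma))}$, so that the definition \eqref{eqsbm} reads $\pr\brk{\GG^*=G\mid\vec\sigma^*=\sigma} = \exp(-\b\cH_G(\sigma))/W_\sigma$. Exchanging summations yields the key identities
\begin{align*}
\ex\brk{Z_{\GG,\b}\vecone\{\cO\}} &= \sum_{\sigma\in\{\pm1\}^{V_n}}\frac{W_\sigma}{N}\cdot\pr\brk{\GG^*\in\cO\mid\vec\sigma^*=\sigma}, & \ex\brk{Z_{\GG,\b}} &= \sum_\sigma\frac{W_\sigma}{N}.
\end{align*}
Thus, writing $q(\sigma) := \pr\brk{\GG^*\in\cO\mid\vec\sigma^*=\sigma}$, the lemma reduces to showing that $q(\sigma)$ averages to $\Theta(1)$ under the $W_\sigma$-weighted distribution, given that it averages to $1-o(1)$ under the uniform distribution on $\{\pm1\}^{V_n}$.

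Next I would exploit vertex symmetry to collapse the problem onto the magnetisation $m(\sigma) = \sum_i\sigma_i$. Both $W_\sigma$ and $q(\sigma)$ depend on $\sigma$ only through $m(\sigma)$, so writing $W(k),q(k)$ for the common values and setting
\begin{align*}
\pi(k) = 2^{-n}\binom{n}{(n+k)/2},\qquad \tilde\pi(k) = \frac{\binom{n}{(n+k)/2}\cdot W(k)/N}{\ex\brk{Z_{\GG,\b}}},
\end{align*}
the hypothesis reads $\sum_k\pi(k)q(k)\sim 1$ and the target is $\sum_k\tilde\pi(k)q(k) = \Theta(1)$. The technical heart is a uniform Radon-Nikodym bound $\tilde\pi(k)\leq C\pi(k)$; granted this, since $q(k)\in[0,1]$,
\begin{align*}
1 - \frac{\ex\brk{Z_{\GG,\b}\vecone\{\cO\}}}{\ex\brk{Z_{\GG,\b}}} = \sum_k\tilde\pi(k)(1-q(k)) \leq C\sum_k\pi(k)(1-q(k)) = C(1-\pr\brk{\GG^*\in\cO}) = o(1),
\end{align*}
which delivers the claim.

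For the Radon-Nikodym bound, a first-moment computation analogous to the proof of \Lem~\ref{Cor_simple*} but restricted to spin configurations of fixed magnetisation $k$ yields $W(k)/N = ((1+\eul^{-\b})/2 - (1-\eul^{-\b})(k/n)^2/2)^{dn/2+o(n)}$, a quantity maximised at $k=0$ with value $((1+\eul^{-\b})/2)^{dn/2+o(n)}$. Combining with \Lem~\ref{Cor_simple*} then gives $\tilde\pi(k)/\pi(k) = 2^n W(k)/(N\ex\brk{Z_{\GG,\b}}) \leq 2^n W(0)/(N\ex\brk{Z_{\GG,\b}}) = O(1)$. The main obstacle lies precisely in this last step: $\pi$ and $\tilde\pi$ are both approximately Gaussian on the scale $\sqrt n$ but with different variances, and keeping the ratio of their peak heights bounded uniformly in $n$ requires tracking the multiplicative constants of the Gaussian first-moment integral carefully, in harmony with \Lem~\ref{Cor_simple*}.
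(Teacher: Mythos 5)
Your skeleton is essentially the paper's own argument in disguise: the exchange-of-summation identity $\ex\brk{Z_{\GG,\b}\vecone\cbc\cO}=\sum_\sigma (W_\sigma/N)\,\pr\brk{\GG^*\in\cO\mid\vec\sigma^*=\sigma}$ is exactly the change of measure behind \Lem~\ref{Lemma_null} and \Cor~\ref{Cor_null}, and your ``uniform Radon--Nikodym bound'' $\tilde\pi(k)\leq C\pi(k)$ is precisely the paper's uniform estimate $\ex[\psi_{\G,\b}(\sigma)]=O(2^{-n}\ex[Z_{\G,\b}])$ (eq.~\eqref{eqLemma_null4}), combined with the split into magnetisation shells. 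So the structure is sound and the reduction to the magnetisation is legitimate (both $W_\sigma$ and $q(\sigma)$ are indeed permutation-invariant).

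The genuine gap is the step you yourself flag as ``the main obstacle'', and as written it does not go through. You justify $\tilde\pi(k)/\pi(k)=2^nW(k)/(N\ex[Z_{\GG,\b}])=O(1)$ from a fixed-magnetisation first-moment formula stated only up to a factor $\exp(o(n))$ (the ``$dn/2+o(n)$'' exponent). With that precision neither $W(k)\leq CW(0)$ on the bulk scale $|k|=O(\sqrt n)$ nor $2^nW(0)\leq C\,N\ex[Z_{\GG,\b}]$ follows; you only get a bound of the form $\exp(o(n))$, which makes the final inequality $\sum_k\tilde\pi(k)(1-q(k))\leq C\sum_k\pi(k)(1-q(k))=o(1)$ vacuous. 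What is needed is constant-factor (local-CLT/Laplace) control of $\ex[\exp(-\b\cH_{\GG}(\sigma))]$ as a function of the magnetisation, uniformly over the relevant range, and this is where the paper invests its work: it computes exactly in the pairing model (\Lem~\ref{Lemma_mu}), extracts Stirling asymptotics with explicit $\Theta(1)$ constants (eq.~\eqref{eqLemma_mean7}), applies the Laplace method (\Lem~\ref{Lemma_mean}), and only then transfers to simple graphs via $\pr\brk{\G^*\in\cS}=\Omega(1)$ (\Lem~\ref{Lemma_simple*}). Since you work directly with simple $d$-regular graphs, you would additionally have to show that the conditional probability of simplicity is $\Theta(1)$ uniformly over the magnetisation/edge-statistics classes entering $W(k)$ --- a point your proposal does not address at all. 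In short: correct framework, same route as the paper, but the decisive quantitative ingredient (constant-factor asymptotics for the restricted first moment, plus the simplicity transfer) is missing rather than proved.
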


To show that $\pr\brk{\GG^*\in\cO}\sim1$ we will couple the planted model $\GG^*$ with a broadcasting process on the infinite $(d-1)$-ary tree $\TT_{d-1}$.
Let $u_0$ signify the (degree-$d$) root of $\TT_{d-1}$.
Proceeding down the tree, the broadcasting process constructs an assignment $\vec\tau\in\{\pm1\}^{V(\TT_{d-1})}$ as follows.
Initially we choose $\vec\tau_{u_0}\in\{\pm1\}$ uniformly at random.
Subsequently, having defined $\vec\tau_u$ for all $u$ at distance at most $\ell$ from $u_0$ already, we define the value $\vec\tau_w$ of a child $w$ of such a vertex $u$ by letting
\begin{align}\label{eqbroadcast}
\pr\brk{\vec\tau_w=\vec\tau_u\mid\vec\tau_u}&=\eb/(1+\eb).
\end{align}
In words, $w$ retains the spin of its parent with probability $\eb/(1+\eb)$, and is assigned the opposite spin with the remaining probability $1/(1+\eb)$.
Let $\cT_\ell$ denote the $\sigma$-algebra generated by the spins $\vec\tau_u$ of all vertices $u$ at distance greater than $\ell$ from $u_0$.
The following result shows that the spin $\tau_{v_0}$ decorrelates from $\cT_\ell$ in the limit of large $\ell$ if $\b<\b^*(d)$.
In other words, the broadcasting process `forgets' the spin of the root, a property known as non-reconstruction~\cite{Bleher_1995}.

\begin{lemma}[\cite{Bleher_1995}] \label{lem_reconstruction}
Let $d\geq3$ and $\b<\b^*(d)$.
Then 
\begin{align}\label{eqlem_reconstruction}
\lim_{\ell \to \infty}\ex\abs{\pr\brk{\vec\tau_{v_0}=1\mid\cT_\ell}-\frac{1}{2}}=0. 
\end{align}
\end{lemma}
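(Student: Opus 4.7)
The plan is to prove this classical Kesten-Stigum non-reconstruction bound for the Ising broadcast via the belief-propagation magnetization recursion on the infinite tree. I would first apply a sign-flip on alternating levels of $\TT_{d-1}$ to reduce the antiferromagnetic broadcast~\eqref{eqbroadcast} to the equivalent ferromagnetic broadcast with parent-to-child correlation $\theta := \tanh(\b/2)$. A direct computation shows that $\b < \b^*(d)$ is equivalent to $(d-1)\theta^2 < 1$, the strict Kesten-Stigum condition on the $(d-1)$-ary sub-trees rooted at the children of $u_0$. By the tree Markov property, conditioning on $\cT_\ell$ is equivalent to conditioning only on the spins at distance exactly $\ell+1$ from $u_0$, so one may work with finite-boundary posteriors.

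For each vertex $u$ of $\TT_{d-1}$ and each $\ell \ge 0$ define the posterior mean
\begin{align*}
Y_\ell^{(u)} &:= \Erw\bigl[\vec\tau_u \,\big|\, \text{spins at distance exactly $\ell$ in the sub-tree rooted at }u\bigr].
\end{align*}
By Bayes' rule and the tree Markov property these quantities satisfy the recursion
\begin{align*}
Y_\ell^{(u)} &= \tanh\Bigl(\sum_{w \text{ child of } u} \tanh^{-1}\!\bigl(\theta\, Y_{\ell-1}^{(w)}\bigr)\Bigr),
\end{align*}
the summands being i.i.d.\ copies of $Y_{\ell-1}$ because the sub-trees are disjoint (there are $d-1$ of them at any internal vertex, $d$ at the root $u_0$). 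Spin-flip symmetry gives $\Erw[Y_\ell^{(u)}] = 0$, and the conclusion of the lemma is equivalent to $q_\ell := \Erw[(Y_\ell^{(u_0)})^2] \to 0$.

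The core of the argument is a linearization about the trivial fixed point $Y \equiv 0$. Using $\tanh^{-1}(\theta x) = \theta x + O(x^3)$, $\tanh(s) = s + O(s^3)$, and independence of the children, the BP recursion induces a second-moment map $q_\ell \le F(q_{\ell-1})$ with $F(0) = 0$ and $F'(0) = (d-1)\theta^2 < 1$. Once $q_\ell$ is small, $F$ contracts at rate $(d-1)\theta^2$, forcing $q_\ell \to 0$ geometrically fast.

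The main obstacle is bridging the gap between the initial value $q_0 = 1$ and the small-$q$ regime where the linearization is valid: a priori the higher-order terms in the Taylor expansion could sustain a non-trivial fixed point of the BP recursion in distribution. The remedy, following Bleher-Ruiz-Zagrebnov~\cite{Bleher_1995}, is to observe that $(Y_\ell^{(u)})_\ell$ is a $[-1,1]$-bounded reverse martingale, hence converges almost surely and in $L^2$ to a limit $Y_\infty^{(u)}$ whose law is a distributional fixed point of the BP operator. One then shows that under strict subcriticality $(d-1)\theta^2 < 1$ the only such fixed point is the point mass at $0$: the argument uses a monotone coupling between iterates launched from different boundary conditions together with a convexity/Jensen-type estimate on the fixed-point equation for the magnetization distribution. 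This uniqueness-of-fixed-point step is the delicate part of the proof and is the main content of~\cite{Bleher_1995}; once it is in hand, $q_\infty = 0$ and combining with the linearization at late times yields \eqref{eqlem_reconstruction}.
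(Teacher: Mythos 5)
The paper does not prove this lemma at all: it is imported verbatim from Bleher, Ruiz and Zagrebnov \cite{Bleher_1995} (with \cite{Higuchi_1977} cited for sharpness), so there is no internal proof to compare against. Measured against that, your write-up is a faithful outline of the classical argument, and the reductions you carry out yourself are correct: the alternate-level sign flip turning the antiferromagnetic broadcast \eqref{eqbroadcast} into a ferromagnetic channel with correlation $\theta=\tanh(\b/2)$; the identity $(d-1)\tanh^2(\b^*(d)/2)=1$, so that $\b<\b^*(d)$ is exactly the Kesten--Stigum condition $(d-1)\theta^2<1$; the equivalence of conditioning on $\cT_\ell$ with conditioning on the sphere at depth $\ell+1$ via the Markov property; the BP magnetization recursion with $d$ children at the root and $d-1$ elsewhere; and the equivalence of \eqref{eqlem_reconstruction} with $\Erw[Y_\ell^2]\to0$ for the bounded reverse martingale $Y_\ell$.

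The one substantive caveat is the point you yourself flag: everything beyond the linearization at $q=0$ --- namely that the second-moment recursion contracts globally (equivalently, that $\delta_0$ is the only distributional fixed point of the BP operator under $(d-1)\theta^2<1$), which is what bridges the gap from $q_0=1$ --- is asserted and deferred to \cite{Bleher_1995} rather than proved. Since the paper itself handles the entire lemma by that same citation, this is consistent with the paper's treatment; but as a standalone proof your text is incomplete precisely at the step where the real difficulty lies, and your one-line description of how \cite{Bleher_1995} settle it (monotone coupling plus a Jensen-type estimate) is a paraphrase rather than an argument. If you intend this to be self-contained, that fixed-point uniqueness (or, alternatively, the Evans--Kenyon--Peres--Schulman style $L^2$ bound, which avoids the fixed-point analysis altogether and yields non-reconstruction below the Kesten--Stigum bound directly) is the part that must actually be written out.
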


\noindent
As an aside, $\b^*(d)$ actually is the sharp non-reconstruction threshold \cite{Higuchi_1977}.
Thus, \eqref{eqlem_reconstruction} ceases to hold for $\b>\b^*(d)$.

Equipped with \Lem~\ref{lem_reconstruction} the proof of the condition $\ex\brk{Z_{\GG,\b}\vecone\cbc{\cO}}\sim\ex\brk{Z_{\GG,\b}}$ proceeds as follows.
For a typical vertex of $\GG^*$, say $v_1$, we couple the spins that the planted configuration $\vec\sigma^*$ assigns to vertices in the $\ell$-ball around $v_1$ with the broadcasting process.
This coupling is based on the fact that the random regular graph $\GG^*$ converges to the $d$-regular tree in the Benjamini-Schramm topology.
Then we re-sample the spins inside the $\ell$-ball given the spins assigned to all the vertices at distance greater than $\ell$ from $v_1$ according to the Boltzmann distribution $\mu_{\GG^*,\b}$.
Let $\vec\sigma^{**}$ denote the resulting spin configuration.
\Lem~\ref{lem_reconstruction} will enable us to conclude that the re-sampled spin $\vec\sigma^{**}_{v_1}$ is asymptotically independent from the original spin $\vec\sigma^*_{v_1}$.
Finally, we will show that both $\vec\sigma^*$ and $\vec\sigma^{**}$ are distributed approximately as two samples from the Boltzmann distribution $\mu_{\GG^*,\b}$, thereby deriving the following.

\begin{lemma}\label{cor_reconstruction}
Let $d\geq3$ and $\b<\b^*(d)$.
Then $\pr\brk{\GG^*\in\cO}\sim1$.
\end{lemma}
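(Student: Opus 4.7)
The plan is to bound $\Erw_{\GG^*}\Erw_{\vec\sigma,\vec\sigma'\sim\mu_{\GG^*,\beta}^{\otimes 2}}\abs{\vec\sigma\cdot\vec\sigma'}$ and conclude via Markov's inequality, choosing $\eps_n\to0$ sufficiently slowly. Jensen yields $(\Erw\abs{\vec\sigma\cdot\vec\sigma'})^2\le\Erw(\vec\sigma\cdot\vec\sigma')^2$, and the replica identity $\Erw_{\vec\sigma,\vec\sigma'}[(\vec\sigma\cdot\vec\sigma')^2\mid\GG^*]=\sum_{v,w\in V_n}\bck{\sigma_v\sigma_w}_{\GG^*,\beta}^2$ (valid because $\vec\sigma,\vec\sigma'$ are conditionally independent) reduces the task to
\[
\sum_{v,w\in V_n}\Erw_{\GG^*}\bck{\sigma_v\sigma_w}_{\GG^*,\beta}^2=o(n^2).
\]

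Next I introduce the local Gibbs marginal $p_v(\vec\sigma):=\bck{\sigma_v\mid\vec\sigma|_{B_\ell(v)^c}}_{\GG^*,\beta}$, where $B_\ell(v)$ denotes the $\ell$-ball around $v$ in $\GG^*$ and $\ell=\ell(n)\to\infty$ slowly with $n$. For any pair $v,w$ with $B_\ell(v)\cap B_\ell(w)=\emptyset$, the spatial Markov property gives $\bck{\sigma_v\sigma_w}=\Erw_{\vec\sigma}[\tilde p_v\tilde p_w]$ where $\tilde p_v=\Erw[\sigma_v\mid\vec\sigma|_{(B_\ell(v)\cup B_\ell(w))^c}]$; since $\tilde p_v$ is the conditional expectation of $p_v$ over a coarser $\sigma$-algebra, conditional Jensen yields $\Erw[\tilde p_v^2]\le\Erw[p_v^2]$, and combined with Cauchy--Schwarz and $\Erw[p_w^2]\le 1$ this gives $\bck{\sigma_v\sigma_w}^2\le\Erw_{\vec\sigma}[p_v^2]$. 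The number of pairs with non-disjoint $\ell$-balls is $O(n\,d^{2\ell})$ (because $\GG^*$ is locally tree-like), which is $o(n^2)$ provided $d^{2\ell}=o(n)$, e.g.\ $\ell=\lfloor\log\log n\rfloor$. Hence it suffices to prove $\Erw_{\GG^*}\Erw_{\vec\sigma}[p_v^2]=o(1)$ for a single fixed $v$.

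To attack this last bound I switch to the planted model. Bayes' rule reads $\pr[\vec\sigma^*=\sigma\mid\GG^*=G]\propto\exp(-\beta\cH_G(\sigma))/Z(\sigma)$ with $Z(\sigma)=\sum_H\exp(-\beta\cH_H(\sigma))$; by the $\pm1$-symmetry of $\cH$, $Z(\sigma)$ depends on $\sigma$ only through its magnetisation, and a routine computation shows $Z(\sigma)=(1+o(1))Z(\sigma_0)$ uniformly over balanced $\sigma,\sigma_0$ (a regime in which $\vec\sigma^*$ lies \whp). Consequently, on a full-measure event, the conditional law of $\vec\sigma^*$ given $\GG^*$ coincides with $\mu_{\GG^*,\beta}$ up to a $1+o(1)$ multiplicative factor. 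Let $\vec\sigma^{**}$ denote the re-sample of $\vec\sigma^*|_{B_\ell(v)}$ from $\mu_{\GG^*,\beta}(\cdot\mid\vec\sigma^*|_{B_\ell(v)^c})$. By construction, $\vec\sigma^*_v$ and $\vec\sigma^{**}_v$ are conditionally independent given $\vec\sigma^*|_{B_\ell(v)^c}$ with common conditional mean $p_v(\vec\sigma^*)$, so the tower rule gives $\Erw[\vec\sigma^*_v\vec\sigma^{**}_v\mid\GG^*]=\Erw_{\vec\sigma^*}[p_v(\vec\sigma^*)^2]=(1+o(1))\Erw_{\vec\sigma\sim\mu_{\GG^*,\beta}}[p_v^2]$, whence it suffices to show $\Erw[\vec\sigma^*_v\vec\sigma^{**}_v]=o(1)$ in the planted model.

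Here Lemma~\ref{lem_reconstruction} delivers the final estimate. By Benjamini--Schramm convergence of $\GG^*$ to the infinite $d$-regular tree, $B_\ell(v)$ in $\GG^*$ is isomorphic to the $\ell$-truncation of $\TT_{d-1}$ rooted at $v$ with probability $1-o_n(1)$, and on this event the joint law of $\vec\sigma^*|_{B_\ell(v)}$ matches the broadcasting process~\eqref{eqbroadcast} up to total variation $o_n(1)$, because the planted-model conditional probability that an edge joins equal spins equals $\eul^{-\beta}/(1+\eul^{-\beta})$ exactly. Under this coupling, $\vec\sigma^*_v$ and $\vec\sigma^{**}_v$ are two conditionally independent samples from the posterior of $\vec\tau_{u_0}$ given $\cT_\ell$, so
\[
\Erw[\vec\sigma^*_v\vec\sigma^{**}_v]=\Erw\bc{2\pr[\vec\tau_{u_0}=1\mid\cT_\ell]-1}^2+o_n(1)\le 2\Erw\abs{\pr[\vec\tau_{u_0}=1\mid\cT_\ell]-1/2}+o_n(1),
\]
and for $\beta<\bstar(d)$ Lemma~\ref{lem_reconstruction} makes the first term $o_\ell(1)$. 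I expect the main obstacle to be the planted-to-broadcasting coupling of the previous sentence: one has to verify both the approximate edge-level conditional independence in the configuration-model representation of $\GG^*$ and the exact value of the edgewise same-spin probability, all while correctly accounting for the $d$-regularity constraint, which makes $\GG^*$ a biased but still locally tree-like perturbation of $\GG(n,d)$.
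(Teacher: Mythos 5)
Your overall architecture -- reduce the event $\cO$ to smallness of conditional marginals given a distance-$\ell$ boundary, couple the planted configuration locally with the broadcasting process, and invoke \Lem~\ref{lem_reconstruction} -- is in the same spirit as the paper's route (which goes through \Lem~\ref{Cor_O}, \Cor~\ref{Cor_null} and \Lem~\ref{Lemma_simple*}), and your replica/second-moment-of-overlap reduction and the re-sampling device are sound. However, there is a genuine gap at the step where you identify the conditional law of $\vec\sigma^*$ given $\GG^*$ with the Boltzmann measure $\mu_{\GG^*,\beta}$ ``up to a $1+o(1)$ factor''. By Bayes the posterior is $\mu_{\GG^*,\beta}$ reweighted by $1/Z(\sigma)$, where $Z(\sigma)=\sum_H\exp(-\beta\cH_H(\sigma))$ depends on the magnetisation only; on the window of magnetisation $O(\sqrt n)$ this factor is only $\Theta(1)$ (it carries $\exp(\Theta(m^2/n))$ corrections), not $1+o(1)$, and outside that window it is exponentially different. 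Consequently, to convert $\Erw\brk{p_v(\vec\sigma^*)^2\mid\GG^*}$, which averages the boundary configuration over the \emph{posterior}, into $\Erw_{\vec\sigma\sim\mu_{\GG^*,\beta}}\brk{p_v^2}$, which averages it over the \emph{Gibbs measure of the planted graph}, you must show that $\mu_{\GG^*,\beta}$ puts all but negligible mass on nearly balanced configurations. The balancedness you invoke concerns $\vec\sigma^*$ (whose marginal law is uniform), not the Gibbs measure of $\GG^*$, and controlling the Gibbs magnetisation on the planted graph is not a routine computation -- it is intertwined with the very overlap concentration being proved. As written, this swap of measures is unjustified.

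The paper avoids the pointwise change of measure altogether: \Lem~\ref{Lemma_null} and \Cor~\ref{Cor_null} provide a planting equivalence at the level of $Z$-weighted events, converting the broadcasting estimate for $(\G^*,\vec\sigma^*)$ (\Cor~\ref{Cor_broadcast_coupling}) into a statement about a genuine Gibbs sample $\vec\sigma_{\GG}$ in the $Z$-biased null model, the magnetisation issue being settled once and for all inside the proof of \Lem~\ref{Lemma_null} by splitting configurations according to magnetisation (and yielding only $\Theta(\cdot)$ relations, which suffice there). It then proves the purely deterministic implication \eqref{eqCor_O2}: for \emph{any} $d$-regular graph, if most vertices have near-unbiased conditional marginals given the distance-$\ell$ boundary induced by a Gibbs sample itself, then two independent Gibbs samples are nearly orthogonal (re-sampling plus Chebyshev), after which \Cor~\ref{Cor_null} transfers the conclusion back to $\GG^*$. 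To repair your argument you would either have to supply the missing control of the Gibbs magnetisation on $\GG^*$, or reroute through the event-level planting lemma as the paper does. A minor, fixable point besides: disjointness of the two $\ell$-balls does not yet give conditional independence of $\sigma_v,\sigma_w$ given the exterior spins -- you need no edges between the balls, i.e.\ $\dist(v,w)\geq 2\ell+2$ -- but the number of exceptional pairs is still $o(n^2)$.
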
 

\noindent
As shown in Section~\ref{sec_thm_1},
\Prop~\ref{prop_recon_energy} will be an easy consequence of \Lem~\ref{Lemma_hunch} and \Lem~\ref{cor_reconstruction},
proved respectively in Sections \ref{sec_trunc1} and \ref{Sec_couple}.
Moreover, the first part of \Thm~\ref{thm_elogz} follows from \Prop~\ref{prop_recon_energy} and a few lines of calculations;
this is show just below.

The above argument highlights the difference between the \ER\ graph and the random regular graph and the reason why we have the strict inequality $\b^\dagger(d)<\b^*(d)$ for all $d\geq3$.
Indeed, in the $d$-regular tree, to which the random regular graph converges locally, every vertex has $d-1$ children.
By contrast, the \ER\ graph of average degree $d$ converges locally to a Galton-Watson tree with offspring distribution $\Po(d)$.
Hence, the average number of children has mean $d$ rather than $d-1$.
The effect is that the broadcasting process on the Galton-Watson tree is able to remember the spin of the root for smaller values of $\b$ than in the regular case.
Therefore, it is natural to expect that on the \ER\ graph long-range correlations emerge for smaller $\b$.

\begin{proof}[Proof of \Thm~\ref{thm_elogz}, part 1]
\newcommand{\Zb}{Z_{\GG,\b}}
\newcommand{\lZb}{\log \Zb}
\newcommand{\elZb}{\ex[\lZb]}
\newcommand{\oneO}{\vecone\!\!\cbc{\cO}}
First, we argue by Azuma's inequality that 
for any $d\geq3$ and $\b>0$, $\lZb$ is concentrated about $\elZb$.
As is standard, construct $\GG$ using $dn/2$ independent random variables $X_i$ each giving the matching of the next point in the configuration model.
Compare this to a uniform random reference matching.
If $X_i$ matches a point $A$ to $B$ but the reference matching matched $A$ to $C\neq B$ and $B$ to $D$, update the reference by matching $A$ to $B$ and $C$ to $D$.
The reference copy has two edges added and two deleted, and with $X_i$ uniformly random the reference remains uniformly random, so $X_i$ changes the expectation of $\lZb$ conditioned on $X_1,\ldots,X_i$ by at most $2\b$.
Azuma's inequality yields

\begin{align}\label{eqthm_elogz_p_2}
	\pr\brk{\abs{\log Z_{\GG,\b}-\ex[\log Z_{\GG,\b}]}>t}&\leq2\exp\bc{-\frac{t^2}{4\b^2dn}}\qquad(t>0).
\end{align}

For $\b<\b^*(d)$, \Prop~\ref{prop_recon_energy} gives $\ex[\Zb]=\Theta(\ex[\Zb \oneO)]$ while trivially $\ex[\Zb^2]\leq \ex[\Zb^2 \oneO]$, so from the Paley-Zygmund inequality,
\begin{align}\label{eqthm_elogz_p_1}
	\pr\brk{Z_{\GG,\b}\geq \frac12 \ex[Z_{\GG,\b}]}
	&\geq\frac14\frac{\ex[\Zb]^2}{\ex[\Zb^2]}
	=\Omega(1)\frac{\ex[\Zb\oneO]^2}{\ex[\Zb^2 \oneO]}
	=\Omega(\exp(-o(n))),
\end{align}
the last inequality again from \Prop~\ref{prop_recon_energy}.
Thus, $\pr\brk{\log \Zb \geq \log\ex[\Zb]-2}=\Omega(\exp(-o(n)))$.
For any $\eps>0$, were there arbitrarily large $n$ for which $\log \ex[\Zb] > \ex[\log \Zb] +\eps n$ this would contradict the result from Azuma.
But by Jensen's inequality $\log \ex[\Zb] \geq \ex[\log \Zb]$,
so $\ex[\log \Zb] = \log \ex[\Zb]+o(n)$.
Taking the value of $\log \ex[\Zb]$ from \Lem~\ref{Cor_simple*} (or \Prop~\ref{prop_recon_energy}) establishes the first part of \Thm~\ref{thm_elogz}.

\ignore{
\begin{align*}
    \Pr \brk{\log Z_{\GG, \b} \geq \log \Erw\brk{Z_{\GG, \b}} - \log n} &\geq \Pr \brk{\log Z_{\GG, \b} \geq \log \Erw\brk{Z_{\GG, \b} \vecone \cbc{\cO}} + \log C - \log n} \\
    &\geq \Pr \brk{\log \bc{ Z_{\GG, \b} \vecone \cbc{\cO}} \geq \Erw\brk{\log \bc{ Z_{\GG, \b} \vecone \cbc{\cO}}} - \log n} \\
    &= \Pr \brk{\bc{ Z_{\GG, \b} \vecone \cbc{\cO}} \geq \Erw\brk{\bc{ Z_{\GG, \b} \vecone \cbc{\cO}}} \exp \bc{- \log n}} \\
    &\geq \bc{1-\exp(-\log n)}^2 \frac{\ex[Z_{\GG,\b}\vecone\cbc{\cO}]^2}{\ex[Z_{\GG,\b}^2\vecone\cbc{\cO}]} \\
    &\geq \frac{\ex[Z_{\GG,\b}\vecone\cbc{\cO}]^2}{2\ex[Z_{\GG,\b}^2\vecone\cbc{\cO}]} \\
    &\geq \exp(-\delta n)\qquad\mbox{with }\delta=o(1), C=\Theta(1)
\end{align*}

Combining \eqref{eqthm_elogz_p_1} and \eqref{eqthm_elogz_p_2}, we see that $\log Z_{\GG,\b}=\log \ex[Z_{\GG,\b}]+o(n)$ \whp\
Therefore, the first part of \Thm~\ref{thm_elogz} follows from \Lem~\ref{Cor_simple*}.
}
\end{proof}

\subsection{The Bethe free energy}

In the next step we prove the the second statement of \Thm~\ref{thm_elogz}.
As discussed earlier, 
the probability that a given graph $G$ comes up as the result $\GG^*$ of the stochastic block model is (nearly) proportional to the partition function $Z_{G,\b}$.
Therefore, if the partition function $Z_{\GG,\b}$ is tightly concentrated about its mean $\Erw[Z_{\GG,\b}]$, then we might expect that the distribution of $\GG^*$ and of the plain random $d$-regular graph are `close'.
By contrast, if 
$Z_{\GG,\b}$ is not concentrated but prone to a lottery phenomenon where a few unlikely outcomes render a disproportionate contribution to $\Erw[Z_{\GG,\b}]$, then we should expect that this discrepancy is exacerbated upon passing to size-biased model $\GG^*$ as outliers receive an extra boost.
The following lemma formalises this intuition.
It replaces the vague `concentration' phrasing with asymptotic equality of $\Erw\brk{\log Z_{\GG,\b}}$ and $\log \Erw \brk{Z_{\GG,\b}}$, 
and the equivalent for $\GG^*$, with $\log \Erw \brk{Z_{\GG,\b}}$
known from \eqref{eqTech8}; this equality certainly follows from sufficient concentration, while without concentration the equality would be an odd coincidence.

\begin{lemma}[{\cite[Lemma~4.4]{Coja_2020}}]\label{Lemma_GG*}
Let $d\geq3$ and $\b>0$.
We have $\Phi_d(\b)=\log2+\frac{d}{2}\log\frac{1+\eul^{-\beta}}{2}$ if and only if
\begin{align}\label{eqLemma_GG*1}
\lim_{n\to\infty}\frac{1}{n}\Erw\brk{\log Z_{\GG^*,\b}}=\log2+\frac{d}{2}\log\frac{1+\eul^{-\beta}}{2}.
\end{align}
\end{lemma}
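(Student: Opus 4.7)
The plan is to translate both free-energy equalities in the lemma into statements about two Kullback--Leibler divergences and to argue that they vanish simultaneously.

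First, I would replace the stochastic block model $\GG^*$ by its cleaner \emph{Gibbs-planted} cousin $\GG^\dagger$ with graph marginal $\Pr\brk{\GG^\dagger=G}=\bc{Z_{G,\beta}/\Erw\brk{Z_{\GG,\beta}}}\Pr\brk{\GG=G}$. The two models differ only in the prior on the ground-truth spin vector (uniform for $\vec\sigma^*$, magnetization-weighted for $\vec\sigma^\dagger$), and since the conditional law $\GG^*\mid\vec\sigma^*$ depends on $\vec\sigma^*$ only through its magnetization while both priors concentrate on spin vectors of magnetization $O(\sqrt n)$, a short calculation on the normalising constants $Y(\sigma)=\sum_H\exp(-\beta\cH_H(\sigma))$ gives $\Erw\brk{\log Z_{\GG^*,\beta}}=\Erw\brk{\log Z_{\GG^\dagger,\beta}}+o(n)$. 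Setting $Y=Z_{\GG,\beta}/\Erw\brk{Z_{\GG,\beta}}$, the relation $\Pr\brk{\GG^\dagger=G}=Y(G)\Pr\brk{\GG=G}$ then yields
\begin{align*}
\log\Erw\brk{Z_{\GG,\beta}}-\Erw\brk{\log Z_{\GG,\beta}}&=-\Erw_\pi\brk{\log Y}=\KL{\pi}{\pi^\dagger}\ge 0,\\
\Erw\brk{\log Z_{\GG^\dagger,\beta}}-\log\Erw\brk{Z_{\GG,\beta}}&=\Erw_\pi\brk{Y\log Y}=\KL{\pi^\dagger}{\pi}\ge 0,
\end{align*}
the non-negativities being Jensen for $\log$ and for $x\log x$. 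Combined with the asymptotic $\log\Erw\brk{Z_{\GG,\beta}}/n\to\log 2+\tfrac d2\log\tfrac{1+\eul^{-\beta}}{2}$ from Lemma~\ref{Cor_simple*}, the lemma reduces to the symmetry $\KL{\pi}{\pi^\dagger}/n\to 0$ iff $\KL{\pi^\dagger}{\pi}/n\to 0$.

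To establish the symmetry I would combine three a~priori controls on $\log Y$: the deterministic window of length $O(n)$ coming from $2^n\eul^{-\beta dn/2}\le Z_{\GG,\beta}\le 2^n$; the sub-Gaussian Azuma estimate $\Pr\brk{|\log Y-\Erw_\pi\brk{\log Y}|>t}\le 2\exp(-t^2/(4\beta^2 dn))$ inherited from the bounded-differences martingale used in the proof of Theorem~\ref{thm_elogz}, Part~1; and the Markov one-sided tail $\Pr\brk{\log Y>t}\le\eul^{-t}$ coming from $\Erw_\pi\brk Y=1$. Assuming $\Erw_\pi\brk{\log Y}/n\to 0$, I would split $\Erw_\pi\brk{Y\log Y}$ into a near-mean piece $\cbc{|\log Y|\le\eps n}$ (contribution at most $\eps n$), a lower-tail piece (contribution $o(1)$ because $|Y\log Y|\le\eps n\eul^{-\eps n}$ there), and an upper-tail piece that is handled by inserting the minimum of the Azuma and Markov tail bounds into the integration-by-parts identity for $\Erw_\pi\brk{Y\log Y\vecone\cbc{\log Y>\eps n}}$ and then letting $\eps\to 0$. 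The reverse direction is symmetric, because the bounded-differences argument transfers verbatim to $\log Z_{\GG^\dagger,\beta}$ under $\pi^\dagger$ (still supported on $d$-regular graphs).

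The main obstacle is precisely this last step, because the implication ``$\Erw\brk{\log Y}=o(n)$ implies $\Erw\brk{Y\log Y}=o(n)$'' is \emph{false} in full generality: a very rare but very large upper-tail event of $\log Y$ can inflate the reweighted entropy while leaving the unweighted one untouched. What saves us for our specific $Y=Z_{\GG,\beta}/\Erw\brk{Z_{\GG,\beta}}$ is the interplay between the $\sqrt n$-sub-Gaussianity of $\log Y$ and the $O(n)$-deterministic window in which it lives; more conceptually, the local tree-like structure of $\GG$ forces $\log Y$ to be essentially a sum of local contributions, and that form of concentration is preserved by the Gibbs reweighting. Tuning the cutoff between the Azuma and Markov tails so that both pieces of the upper-tail integral are $o(n)$ is the technical heart of [Coja\_2020, Lemma~4.4], on which the present lemma rests.
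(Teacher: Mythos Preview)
The paper does not prove this lemma at all; it is quoted verbatim as \cite[Lemma~4.4]{Coja_2020} and used as a black box. So there is no ``paper's proof'' to compare with, and your sketch is an attempt to reconstruct the argument of the cited reference. The reduction you give is the right one: with $\pi^\dagger(G)=Y(G)\pi(G)$ one has $-\Erw_\pi[\log Y]=\KL{\pi}{\pi^\dagger}$ and $\Erw_\pi[Y\log Y]=\KL{\pi^\dagger}{\pi}$, so by \Lem~\ref{Cor_simple*} the lemma is equivalent to the symmetric vanishing of these two divergences, and the replacement of $\GG^*$ by $\GG^\dagger$ at cost $o(n)$ is also standard.

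There is, however, a genuine gap in the reverse direction. You write that ``the bounded-differences argument transfers verbatim to $\log Z_{\GG^\dagger,\b}$ under $\pi^\dagger$ (still supported on $d$-regular graphs)''. Being supported on $d$-regular graphs is \emph{not} what drives Azuma; what drives it is an explicit edge-exposure filtration with bounded martingale increments, and that filtration comes from the specific construction of $\GG$ via the configuration model. Under the size-biased law $\pi^\dagger$ there is no a priori reason the same edge-exposure martingale has $O(1)$ increments, because the conditional law of the remaining matching given the exposed edges is no longer uniform. The correct way to obtain concentration under $\pi^\dagger$ is to use its \emph{planted} description: generate $\vec\sigma^\dagger$ first and then generate $\GG^\dagger$ edge by edge from the biased matching of clones given $\vec\sigma^\dagger$; conditionally on $\vec\sigma^\dagger$ this is again a product-type construction with $O(1)$ Lipschitz sensitivity, so Azuma applies to $\log Z_{\GG^\dagger,\b}$ around its \emph{conditional} mean, and one then has to control the fluctuation of that conditional mean in $\vec\sigma^\dagger$. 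This is exactly the extra layer of work that \cite[Lemma~4.4]{Coja_2020} does, and it is not captured by the phrase ``transfers verbatim''. Your forward-direction tail argument is also more delicate than the three-piece split suggests (Markov alone gives only $\Erw_\pi[Y\vecone\{\log Y>\eps n\}]=O(n)$, and Azuma alone blows up in the exponent), so the ``tuning of the cutoff'' you allude to really is where the work lies; but you flag this honestly.
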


By the first part of \Thm~\ref{thm_elogz} the lemma's hypothesis holds for $\b<\b^*(d)$.
To prove the second part of  \Thm~\ref{thm_elogz} we will 
show that the conclusion (and thus the hypothesis)
is violated if $\b>\b^*(d)$.
As a stepping stone we use a variational formula for $\Erw[\log Z_{\GG^*,\b}]$ from~\cite{Coja_2020}.
Let $\cP_*([-1,1])$ be the space of all probability measures on the interval $[-1,1]$ with mean zero.
Moreover, for a given such probability measure $\pi$ let $(\MU_{\pi,i})_{i\geq1}$ be a family of independent samples from $\pi$ and let $\Lambda(x)=x\log x$.
The expression
\begin{align} \label{eq_Bethe_exp}
    \cB_{\text{Ising}}(\pi, \beta, d) = \Erw \brk{\frac{\Lambda \bc{\sum_{\sigma \in \pmone} \prod_{i=1}^d 1 - (1-e^{-\beta})(1+\sigma \MU_{\pi,i})/2}}{2^{1-d}(1+\eul^{-\beta})^d} 
			- \frac{d \Lambda \bc{1-(1-\eul^{-\beta})(1+\MU_{\pi,1}\MU_{\pi,2})/2 }}{1+\eul^{-\beta}}}
\end{align}
is called the Bethe free energy.

\begin{lemma}[{\cite[\Thm~2.3]{Coja_2020}}]\label{lem_Bethe}
For any $\beta>0$ and any $d \geq 3$, we have
\begin{align*}
    \limn\frac{1}{n}\Erw[\log Z_{\GG^*,\b}]=\sup_{\pi \in \cP_*([-1,1])}\cB_{\text{Ising}}(\pi, \beta, d) .
\end{align*}
\end{lemma}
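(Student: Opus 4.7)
The plan is to establish the Bethe formula by sandwiching $\limn n^{-1}\Erw[\log Z_{\GG^*,\b}]$ between matching bounds, both expressed through the Bethe functional $\cB_{\text{Ising}}$. The central technical ingredient is the Nishimori identity: from \eqref{eqsbm} and Bayes' rule one checks that, conditional on $\GG^*$, the planted assignment $\vec\sigma^*$ is distributed (up to a negligible magnetisation correction that is absorbed by the mean-zero constraint in $\cP_*([-1,1])$) as a Boltzmann sample from $\mu_{\GG^*,\b}$. This identity will be invoked repeatedly to swap planted and Boltzmann spins, and to halve the number of ``replicas'' that must be tracked.

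For the first of two matching inequalities I would apply a Guerra-Toninelli-type interpolation. Fix $\pi\in\cP_*([-1,1])$, let $(\MU_{\pi,i})_{i\ge1}$ be i.i.d.\ samples from $\pi$, and for $t\in[0,1]$ construct a hybrid planted model $\GG^*(t)$ in which each of the $dn/2$ edges of $\GG^*$ is independently either retained (with probability $t$) or replaced by two ``cavity'' fields $\MU_{\pi,i},\MU_{\pi,j}$ pinned at its endpoints. At $t=1$ one recovers $\GG^*$, while at $t=0$ the partition function factorises across vertices and its normalised log is precisely $\cB_{\text{Ising}}(\pi,\b,d)$. A derivative-in-$t$ computation, leveraging the Nishimori identity to pair replicas and the convexity of $\Lambda(x)=x\log x$ to sign the resulting expression, shows that $\psi_n(t):=n^{-1}\Erw[\log Z_{\GG^*(t),\b}]$ is monotone in $t$ up to $o(1)$. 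Integrating and taking the supremum over $\pi$ yields $\limn n^{-1}\Erw[\log Z_{\GG^*,\b}]\ge \sup_{\pi}\cB_{\text{Ising}}(\pi,\b,d)$.

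For the matching upper bound I would use the Aizenman-Sims-Starr scheme. Setting $\phi_n=\Erw[\log Z_{\GG^*_n,\b}]$, the subadditivity of $\phi_n$ (a by-product of the preceding step) together with the Ces\`aro identity gives $\limn n^{-1}\phi_n=\limn(\phi_{n+1}-\phi_n)$. This incremental free energy may be evaluated via a local surgery: insert one new vertex with $d$ incident edges, compensating the regularity constraint by a matched edge removal, and compare $\log Z$ before and after. Benjamini-Schramm convergence of $\GG^*$ to the broadcast tree from Section~\ref{Sec_bnr} implies that the Boltzmann cavity marginals at the $d$ neighbours of the new vertex converge jointly in distribution to i.i.d.\ copies of a specific $\pi^{\star}\in\cP_*([-1,1])$, namely the fixed point of the broadcast recursion \eqref{eqbroadcast}. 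A direct expansion of the incremental log-partition function in these marginals reproduces exactly $\cB_{\text{Ising}}(\pi^{\star},\b,d)$, whence $\limn n^{-1}\phi_n=\cB_{\text{Ising}}(\pi^{\star},\b,d)\le\sup_{\pi}\cB_{\text{Ising}}(\pi,\b,d)$.

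The principal obstacle will be justifying the interpolation derivative in the sparse-graph setting: unlike the classical mean-field case, it must be implemented in discrete edge-by-edge steps, the $d$-regularity constraint has to be preserved along the entire interpolation (typically via auxiliary matched half-edges), and the $o(n)$ error terms arising from multi-edges and configuration-model normalisation must be controlled uniformly in $t$. A secondary difficulty is pushing the Benjamini-Schramm convergence of local Boltzmann marginals through the nonlinear $\Lambda$-terms in \eqref{eq_Bethe_exp}; this requires uniform integrability, which I would obtain from spatial-mixing estimates of the same flavour as those invoked in Section~\ref{Sec_bnr}, combined with the Nishimori identity to pass between planted and Boltzmann ensembles.
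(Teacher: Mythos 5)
You should first note that the paper does not prove this lemma at all: it is imported verbatim as \cite[Theorem~2.3]{Coja_2020}, so there is no internal argument to compare against, and your proposal is in effect an attempt to reprove that external result. Its broad architecture --- the Nishimori identity for the planted model, a Guerra--Toninelli interpolation for the lower bound, and an Aizenman--Sims--Starr (cavity) increment for the upper bound, with extra care for the $d$-regularity constraint --- is indeed the architecture of \cite{Coja_2020} and its predecessor \cite{CKPZ}, so the overall plan is sound.

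However, there is a genuine gap in your upper bound. You assert that Benjamini--Schramm convergence of $\GG^*$ to the broadcast tree implies that the Boltzmann cavity marginals at the $d$ neighbours of the inserted vertex converge jointly to i.i.d.\ samples from the broadcast fixed point $\pi^{\star}$. Local weak convergence controls the local geometry and the planted spins, but not the \emph{conditional Boltzmann marginals}: for $\b>\bstar(d)$ these depend on the configuration far away --- that is precisely what reconstruction above the Kesten--Stigum bound means, and it is the regime in which \Thm~\ref{thm_elogz}(ii) shows the naive (replica symmetric) value fails. Moreover, if the marginals did converge to the broadcast fixed point you would have identified the maximiser of the variational problem, which is strictly more than the lemma claims and is not established at this generality. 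The actual proof bounds the increment $\Erw[\log Z_{\GG^*_{n+1},\b}]-\Erw[\log Z_{\GG^*_n,\b}]$ by $\cB_{\text{Ising}}$ evaluated at the \emph{empirical, graph-dependent} distribution of cavity marginals and only then by the supremum over $\pi\in\cP_*([-1,1])$; to justify the factorisation of the joint marginals one needs an $\eps$-symmetry (vanishing average two-point correlation) statement obtained from a pinning/small-perturbation argument together with the Nishimori identity, not from local convergence. Two further points need attention: the $t=0$ endpoint of your interpolation does not by itself produce the edge-correction term $-\frac d2\Lambda(\cdots)$ in \eqref{eq_Bethe_exp} (it arises from the compensating fields along the interpolation), and the one-vertex surgery breaks $d$-regularity, so the increment must be realised by a coupling of configuration models with matched half-edges, as is done in \cite{Coja_2020}.
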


Combining \Lem s~\ref{Lemma_GG*} and~\ref{lem_Bethe}, we see that the second part of \Thm~\ref{thm_elogz} boils down to showing that 
\begin{align} \label{eq_Bethe}
    \sup_{\pi \in \cP_*(\pmone)}\cB_{\text{Ising}}(\pi, \beta, d) > \log 2+\frac{d}{2}\log\frac{1+\eul^{-\b}}2\qquad\mbox{for $\beta > \bstar(d)$}.
\end{align}
Luckily, the variational formula \eqref{eq_Bethe} asks to take a supremum over distributions $\pi$.
Therefore, it suffices to point to a specific distribution $\pi$ such that $\cB_{\text{Ising}}(\pi, \beta, d)$ exceeds the first moment bound.
Specifically, for a small $\eps>0$ let us introduce
\begin{align} \label{eq_distribution}
    \pi_\eps^* = \frac{1}{2} \bc{\delta_{ 2\eps} + \delta_{- 2\eps}}
\end{align}
where $\delta_{z}$ is the point mass on $z$.
It is easy to see that for $\eps=0$ we precisely obtain $\cB_{\text{Ising}}(\pi_0^*, \beta, d)=\log(2)+d\log((1+\eb)/2)/2$.
The following proposition shows that for $\b>\b^*(d)$ small $\eps>0$ yield a slightly but strictly larger value.

\begin{proposition} \label{prop_Taylor}
For any $\beta>\bstar(d)$ there exists $\eps>0$ such that 
\begin{align*}
    \cB_{\text{Ising}}(\pi_\eps^*, \beta, d) > \log 2+\frac{d}{2}\log\frac{1+\eul^{-\b}}2.
\end{align*}
\end{proposition}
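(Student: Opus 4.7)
\medskip\noindent\textbf{Proof plan for \Prop~\ref{prop_Taylor}.}
The plan is to substitute $\pi_\eps^*$ into the Bethe functional $\cB_{\text{Ising}}(\nix,\b,d)$ and perform a Taylor expansion around $\eps=0$, where by the calculation sketched in the paragraph before \eqref{eq_distribution} we have $\cB_{\text{Ising}}(\pi_0^*,\b,d)=\log 2+\tfrac{d}{2}\log\tfrac{1+\eul^{-\b}}{2}$. Since $\pi_\eps^*$ is invariant under $\eps\mapsto-\eps$ and each of its samples $\MU_{\pi_\eps^*,i}$ may be represented as $\MU_{\pi_\eps^*,i}=2\eps X_i$ with iid uniform $X_i\in\pmone$, only even powers of $\eps$ appear in the expansion. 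Writing $a=(1+\eul^{-\b})/2$ and $\theta=(1-\eul^{-\b})/2$ (so $\theta/a=\tanh(\b/2)$), the vertex and edge kernels inside the two $\Lambda(\cdot)$'s become
\begin{align*}
A&:=\prod_{i=1}^d(a-\theta\MU_{\pi_\eps^*,i})+\prod_{i=1}^d(a+\theta\MU_{\pi_\eps^*,i}),
& B&:=a-\theta\MU_{\pi_\eps^*,1}\MU_{\pi_\eps^*,2},
\end{align*}
with baseline values $\bar A=2a^d$, $\bar B=a$.

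The key observation is that the linear Taylor term vanishes \emph{exactly}, not just to leading order: because the $X_i$ are independent and mean zero, a direct computation using the binomial expansion of the two products shows $\Erw[A]=\bar A$ and $\Erw[B]=\bar B$. Consequently $\Erw[\Lambda(A)]-\Lambda(\bar A)=\tfrac12\Lambda''(\bar A)\Erw[(A-\bar A)^2]+O(\eps^6)$ and similarly for $B$. The leading fluctuation of $A-\bar A$ equals $8\theta^2 a^{d-2}\eps^2\sum_{i<j}X_iX_j+O(\eps^4)$ (all odd terms in $\MU_{\pi_\eps^*,i}$ cancel between the two products), so $\Erw[(A-\bar A)^2]=64\theta^4 a^{2d-4}\binom{d}{2}\eps^4+O(\eps^6)$. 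Similarly $B-\bar B=-4\theta\eps^2X_1X_2$ so $\Erw[(B-\bar B)^2]=16\theta^2\eps^4$.

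Dividing by the normalisers $2^{1-d}(1+\eul^{-\b})^d=2a^d$ and $1+\eul^{-\b}=2a$ and combining the vertex and edge contributions yields, after straightforward algebra,
\begin{align*}
\cB_{\text{Ising}}(\pi_\eps^*,\b,d)-\cB_{\text{Ising}}(\pi_0^*,\b,d)
&=4d\,\theta^2 a^{-2}\eps^4\Big[(d-1)\tanh^2(\b/2)-1\Big]+O(\eps^6).
\end{align*}
A short calculation shows $\tanh(\bstar(d)/2)=(\eul^{\bstar}-1)/(\eul^{\bstar}+1)=1/\sqrt{d-1}$, hence $(d-1)\tanh^2(\bstar/2)=1$ (which is precisely the Kesten--Stigum identity). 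Since $\b\mapsto\tanh^2(\b/2)$ is strictly increasing, the bracketed quantity is strictly positive for $\b>\bstar(d)$. Choosing $\eps>0$ small enough that the $O(\eps^6)$ error is dominated by the $\eps^4$ main term then completes the proof.

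The main obstacle I anticipate is the bookkeeping in Steps~4--6: one must recognise that the linear-in-$(A-\bar A)$ contribution to $\Erw[\Lambda(A)]$ is identically zero (not merely $o(\eps^2)$) so that the leading contribution from $\Lambda$'s convexity is genuinely of order $\eps^4$ rather than $\eps^2$, and one must then collect the many combinatorial factors correctly so that the coefficient factors as a positive prefactor times $(d-1)\tanh^2(\b/2)-1$. Once this identification is in hand, matching with the closed form \eqref{eqthm_elogz1} for $\bstar(d)$ is a one-line verification.
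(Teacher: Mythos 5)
Your proposal is correct and follows the same overall strategy as the paper: plug $\pi_\eps^*$ into the Bethe functional, Taylor expand around $\eps=0$ (where the value is exactly the first-moment expression), show that the first non-vanishing correction is of order $\eps^4$, and check that its coefficient becomes strictly positive exactly above the Kesten--Stigum point. The one genuine difference is how the $\eps^4$ coefficient is obtained. The paper first records the explicit formula for $\cB_{\text{Ising}}(\pi_\eps^*,\b,d)$ (\Lem~\ref{Lemma_BIsing}) and then computes the first four derivatives at $\eps=0$ symbolically with a computer algebra system (\Lem~\ref{Lemma_Btaylor}), whereas you exploit the probabilistic structure directly: since the $\MU_{\pi_\eps^*,i}=2\eps X_i$ are independent with mean zero, $\Erw[A]=2a^d$ and $\Erw[B]=a$ hold \emph{exactly}, so the linear term in the $\Lambda$-expansion drops out identically and the leading correction is $\tfrac12\Lambda''$ times the variance of the kernels, which you compute by hand. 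Your resulting coefficient $4d\theta^2a^{-2}\bc{(d-1)\theta^2a^{-2}-1}$ with $\theta/a=\tanh(\b/2)$ agrees with the paper's
\begin{align*}
\frac{4 d \eul^{-2\b}(\eul^{\b}-1)^2\bc{\eul^{2\b}(d-2)-2d\eul^{\b}+d-2}}{(1+\eul^{\b})^2(1+\eul^{-\b})^2},
\end{align*}
since $(d-1)(\eul^\b-1)^2-(\eul^\b+1)^2=\eul^{2\b}(d-2)-2d\eul^\b+d-2$, and your identity $\tanh(\bstar/2)=1/\sqrt{d-1}$ is exactly the paper's observation that this factor is positive for $\b>\bstar(d)$. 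The uniform control of the remainder is unproblematic because $|A-2a^d|=O(\eps^2)$ and $|B-a|=O(\eps^2)$ deterministically and both kernels stay bounded away from $0$, so the cubic Taylor remainders are $O(\eps^6)$. What your route buys is a fully human-verifiable derivation of the fourth-order coefficient (no symbolic computation needed); what the paper's route buys is brevity and the fact that it simultaneously certifies the vanishing of the first three derivatives without any combinatorial bookkeeping.
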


\begin{proof}[Proof of \Thm~\ref{thm_elogz}, part 2]
This follows directly from \Lem~\ref{lem_Bethe} and \Prop~\ref{prop_Taylor}.
\end{proof}

\begin{proof}[Proof of \Thm~\ref{Thm_sbm}]
The theorem follows from \Thm~\ref{thm_elogz}, Theorem 17.1 in \cite{Coja_2020} and the fact that the disassortative stochastic block model with two communities is the planted Ising antiferromagnet.
\end{proof}

\subsection{The interpolation method}
With \Thm~\ref{thm_elogz} in place, let us consider the case that $\beta \to \infty$ which eventually allows us to derive improved upper bounds on the expected maximum cut size of random regular graphs.
Unfortunately, the stochastic dependencies between vertex spins make it difficult to get a handle on a simple expression like we had for $\beta < \bstar(d)$ where we simply obtained the first moment bound.
Apart from the obvious short-range dependencies that, for example, induce adjacent vertices to prefer opposite spins, we expect long-range dependencies to occur above the Kesten-Stigum bound.
Thus, for $\b>\b^*(d)$ the spin of a vertex impacts those of distant vertices.

The `1-step replica symmetry breaking ansatz' from physics attempts to describe these long-range dependencies by means of an additional hidden variable~\cite{pnas,MP1}.
The basic hypothesis is that for $\b>\b^*(d)$ the phase space, i.e., the set $\{\pm1\}^{V_n}$ of all possible spin configurations, decomposes into a number $\vec S_1,\ldots,\vec S_\ell$ of `pure states' \whp\
Mathematically $\vec S_1,\ldots,\vec S_\ell$ are pairwise disjoint subsets of $\{\pm1\}^{V_n}$ such that $\mu_{\GG,\b}(\vec S_1)+\cdots+\mu_{\GG.\b}(\vec S_\ell)\sim 1$.
Thus, the sets cover nearly the entire support of the Boltzmann distribution.
Furthermore, once we condition on a pure state $\vec S_h$, long-range effects disappear; formally,
\begin{align*}
	\ex\abs{\sum_{h=1}^\ell\mu_{\GG,\b}(\vec S_h)\bc{\pr\brk{\vec\sigma_{\GG,v_1}=s,\vec\sigma_{\GG,v_2}=s'\mid\GG,\vec\sigma_{\GG}\in\vec S_h}-\pr\brk{\vec\sigma_{\GG,v_1}=s\mid\GG,\vec\sigma_{\GG}\in\vec S_h}\pr\brk{\vec\sigma_{\GG,v_2}=s'\mid\GG,\vec\sigma_{\GG}\in\vec S_h}}}&=o(1).
\end{align*}
Hence, long-range correlations of the unconditional measure $\mu_{\GG,\b}$ arise because the spin $\vec\sigma_{\GG,v_1}$ of a vertex hints at the pure state $\vec S_h$ to which $\vec\sigma_{\GG}$ belongs, which in turn skews our expectations as to the other spins. 
The existence of such a pure state decomposition has been established rigorously~\cite{Coja_2019}.

How does this picture help to estimate the partition function $Z_{\GG,\b}$?
The basic idea behind the interpolation method is to set up an synthetic model of a spin system that exhibits precisely the long-range dependencies predicted by the 1-step rsb ansatz, and no others.
Mathematically this model is represented by a factor graph (or a Markov random field); see the left panel of Figure~\ref{Fig_interpolation}.
The factor graph contains variable nodes (the white circles) that represent the vertices of our graph.
Each of these variable nodes is connected to an external field (the blue box) that is meant to represent the impact of the short-range dependencies imposed by one of the incident edges of the corresponding vertex of $\GG$.
But instead of the complicated direct interactions between the vertices through actual edges as in the original random graph $\GG$, the variable nodes only interact with each other through the yellow node.
This node represents the hidden variable postulated by the 1-step rsb ansatz, i.e., the index of the pure state.
Finally, the red boxes are `negative edges'.
They are necessary because the variable nodes do not interact directly.
In effect, the number of blue nodes is twice the number of edges of the actual graph $\GG$, and thus we have to compensate for the impact of $dn/2$ spare blue boxes.

The cunning idea behind the interpolation method is to build a family of factor graph models parametrised by time $t\in[0,1]$.
The interpolation scheme starts from the artificial factor graph model at time $t=0$.
At each intermediate time step $t\in(0,1)$ the model blends the synthetic $t=0$ case and the actual Ising model on $\GG$.
Ultimately at time $t=1$ all the synthetic ingredients (the blue and red boxes) disappeared and we are left with just the Ising antiferromagnet on $\GG$.
Remarkably, it is possible to prove that the partition function decreases monotonically in terms of $t$.
As a consequence, the partition function of the synthetic model upper bounds that of the Ising model on $\GG$.
Fortunately we do not need to carry out the interpolation method in full.
The result that we need follows from a more general version of the interpolation bound derived in~\cite{Sly_2016}.

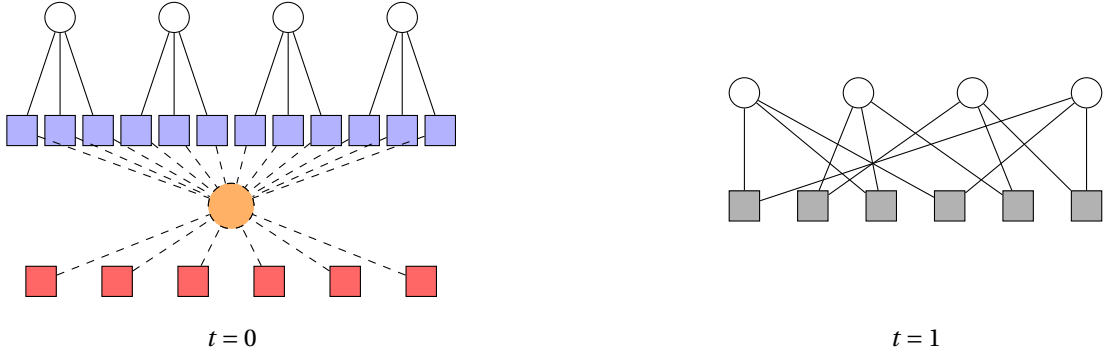
\begin{figure}
\begin{minipage}{0.99 \linewidth}
\begin{tikzpicture}[]
    \node[draw=black, shape=circle, fill=none, minimum width=0.4cm, minimum height = 0.4cm] (VV1) at (10, -0.5) {};
    \node[draw=black, shape=circle, fill=none, minimum width=0.4cm, minimum height = 0.4cm] (VV2) at (11.5, -0.5) {};
    \node[draw=black, shape=circle, fill=none, minimum width=0.4cm, minimum height = 0.4cm] (VV3) at (13, -0.5) {};
    \node[draw=black, shape=circle, fill=none, minimum width=0.4cm, minimum height = 0.4cm] (VV4) at (14.5, -0.5) {};
    
    \node[draw=black, fill=black!30, shape=rectangle, minimum width=0.4cm, minimum height = 0.4cm] (FF1) at (10, -2) {};
    \node[draw=black, fill=black!30, shape=rectangle, minimum width=0.4cm, minimum height = 0.4cm] (FF2) at (10.9, -2) {};
    \node[draw=black, fill=black!30, shape=rectangle, minimum width=0.4cm, minimum height = 0.4cm] (FF3) at (11.8, -2) {};
    \node[draw=black, fill=black!30, shape=rectangle, minimum width=0.4cm, minimum height = 0.4cm] (FF4) at (12.7, -2) {};
    \node[draw=black, fill=black!30, shape=rectangle, minimum width=0.4cm, minimum height = 0.4cm] (FF5) at (13.6, -2) {};
    \node[draw=black, fill=black!30, shape=rectangle, minimum width=0.4cm, minimum height = 0.4cm] (FF6) at (14.5, -2) {};
    
    \node[draw = none] at (12.25, -3.75)  {$t = 1$};
    
    \draw[-] (VV1)--(FF1);
    \draw[-] (VV1)--(FF3);
    \draw[-] (VV1)--(FF4);
    \draw[-] (VV2)--(FF5);
    \draw[-] (VV2)--(FF3);
    \draw[-] (VV2)--(FF2);
    \draw[-] (VV3)--(FF6);
    \draw[-] (VV3)--(FF2);
    \draw[-] (VV3)--(FF5);
    \draw[-] (VV4)--(FF1);
    \draw[-] (VV4)--(FF4);
    \draw[-] (VV4)--(FF6);
    
    \node[draw=black, fill=red!60, shape=rectangle, minimum width=0.4cm, minimum height = 0.4cm] (A1) at (0.75, -3) {};
    \node[draw=black, fill=red!60, shape=rectangle, minimum width=0.4cm, minimum height = 0.4cm] (A2) at (1.75, -3) {};
    \node[draw=black, fill=red!60, shape=rectangle, minimum width=0.4cm, minimum height = 0.4cm] (A3) at (2.75, -3) {};
    \node[draw=black, fill=red!60, shape=rectangle, minimum width=0.4cm, minimum height = 0.4cm] (A4) at (3.75, -3) {};
    \node[draw=black, fill=red!60, shape=rectangle, minimum width=0.4cm, minimum height = 0.4cm] (A5) at (4.75, -3) {};
    \node[draw=black, fill=red!60, shape=rectangle, minimum width=0.4cm, minimum height = 0.4cm] (A6) at (5.75, -3) {};
    
    \node[draw=black, dashed, shape=circle, fill=orange!60, minimum width=0.6cm, minimum height = 0.6cm] (S2) at (3.25, -2) {};
    
    \draw[dashed] (A1)--(S2);
    \draw[dashed] (A2)--(S2);
    \draw[dashed] (A3)--(S2);
    \draw[dashed] (A4)--(S2);
    \draw[dashed] (A5)--(S2);
    \draw[dashed] (A6)--(S2);

    \node[draw=black, shape=circle, fill=none, minimum width=0.3cm, minimum height = 0.4cm] (V1) at (1, 0.5) {};
    \node[draw=black, shape=circle, fill=none, minimum width=0.3cm, minimum height = 0.4cm] (V2) at (2.5, 0.5) {};
    \node[draw=black, shape=circle, fill=none, minimum width=0.3cm, minimum height = 0.4cm] (V3) at (4, 0.5) {};
    \node[draw=black, shape=circle, fill=none, minimum width=0.3cm, minimum height = 0.4cm] (V4) at (5.5, 0.5) {};
    
    \node[draw=black, fill=blue!30, shape=rectangle, minimum width=0.4cm, minimum height = 0.4cm] (F1) at (0.5, -1) {};
    \node[draw=black, fill=blue!30, shape=rectangle, minimum width=0.4cm, minimum height = 0.4cm] (F2) at (1, -1) {};
    \node[draw=black, fill=blue!30, shape=rectangle, minimum width=0.4cm, minimum height = 0.4cm] (F3) at (1.5, -1) {};
    \node[draw=black, fill=blue!30, shape=rectangle, minimum width=0.4cm, minimum height = 0.4cm] (F4) at (2, -1) {};
    \node[draw=black, fill=blue!30, shape=rectangle, minimum width=0.4cm, minimum height = 0.4cm] (F5) at (2.5, -1) {};
    \node[draw=black, fill=blue!30, shape=rectangle, minimum width=0.4cm, minimum height = 0.4cm] (F6) at (3, -1) {};
    \node[draw=black, fill=blue!30, shape=rectangle, minimum width=0.4cm, minimum height = 0.4cm] (F7) at (3.5, -1) {};
    \node[draw=black, fill=blue!30, shape=rectangle, minimum width=0.4cm, minimum height = 0.4cm] (F8) at (4, -1) {};
    \node[draw=black, fill=blue!30, shape=rectangle, minimum width=0.4cm, minimum height = 0.4cm] (F9) at (4.5, -1) {};
    \node[draw=black, fill=blue!30, shape=rectangle, minimum width=0.4cm, minimum height = 0.4cm] (F10) at (5, -1) {};
    \node[draw=black, fill=blue!30, shape=rectangle, minimum width=0.4cm, minimum height = 0.4cm] (F11) at (5.5, -1) {};
    \node[draw=black, fill=blue!30, shape=rectangle, minimum width=0.4cm, minimum height = 0.4cm] (F12) at (6, -1) {};
    
    \node[draw=black, dashed, shape=circle, fill=orange!60, minimum width=0.6cm, minimum height = 0.6cm] (S1) at (3.25, -2) {};
    
    \node[draw = none] at (3.25, -3.75)  {$t = 0$};
    
    \draw[-] (V1) -- (F1);
    \draw[-] (V1) -- (F2);
    \draw[-] (V1) -- (F3);
    \draw[-] (V2) -- (F4);
    \draw[-] (V2) -- (F5);
    \draw[-] (V2) -- (F6);
    \draw[-] (V3) -- (F7);
    \draw[-] (V3) -- (F8);
    \draw[-] (V3) -- (F9);
    \draw[-] (V4) -- (F10);
    \draw[-] (V4) -- (F11);
    \draw[-] (V4) -- (F12);
    
    \draw[dashed] (F1) -- (S1);
    \draw[dashed] (F2) -- (S1);
    \draw[dashed] (F3) -- (S1);
    \draw[dashed] (F4) -- (S1);
    \draw[dashed] (F5) -- (S1);
    \draw[dashed] (F6) -- (S1);
    \draw[dashed] (F7) -- (S1);
    \draw[dashed] (F8) -- (S1);
    \draw[dashed] (F9) -- (S1);
    \draw[dashed] (F10) -- (S1);
    \draw[dashed] (F11) -- (S1);
    \draw[dashed] (F12) -- (S1);
    \end{tikzpicture}
    
\end{minipage}
    \caption{The factor graphs $\GG_0$ and the original factor graph $\GG_1$ with $n=4, d=3$.
}
    \label{Fig_interpolation}
\end{figure}

To state the resulting upper bound precisely, fix any probability measure $\fr$ on $[-1,1]$.
Let $(\vec r_i)_{i\geq[d]}$ be a family of independent random variables with distribution $\fr$; thus, $\vec r_i\in[-1,1]$ for all $i$.
Further, define
\begin{align*}
	\RHO_i(\sigma)&=\frac{1+\sigma\vec r_i}{2}\qquad(\sigma=\pm1).
\end{align*}
The idea is that $\vec \rho_1,\ldots,\vec \rho_d$ represent the short-range influences that the neighbours of some vertex, say $v_1$, exercise on the spin of that vertex within a single pure state.
More specifically, think of $\vec\rho_i(s)$ as the probability that the $i$-th neighbour of $v_1$ would take spin $s\in\{\pm1\}$ if we removed $v_1$ from the random graph.
The following lemma is an immediate consequence of \cite[Theorem E.5]{Sly_2016}.

\begin{lemma}\label{Prop_PD}
	Let $d\geq3,\b>0$.
	Then for any $y>0$ and any $\fr\in\cP([-1,1])$ we have $\Phi_d(\b)\leq\phi_{\b,y}(\fr)$, where
	\begin{align}\label{eqProp_PD}
		\phi_{\b,y}(\fr)&=\frac{1}{y}\log \Erw[X_1^y] -\frac{d}{2y}\log \Erw[X_2^{y}],\\
		X_1&=\sum_{\tau\in\{\pm 1\}} \prod_{h=1}^{d}1-(1-\eul^{-\b})\RHO_{h}(\tau),\qquad X_2=1-(1-\eul^{-\b})\sum_{\tau\in\{\pm 1\}} \RHO_{1}(\tau)\RHO_{2}(\tau)\nonumber.
	\end{align}
\end{lemma}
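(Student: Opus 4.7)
The plan is to deduce Lemma~\ref{Prop_PD} as a direct specialisation of the 1RSB interpolation bound \cite[Theorem~E.5]{Sly_2016} applied to the antiferromagnetic Ising weight. That theorem provides, for an arbitrary symmetric pairwise factor weight $\psi:\{\pm1\}^2\to\RRpos$, any $y>0$, and any population $\fr\in\cP([-1,1])$, an upper bound on the limiting free energy of the random $d$-regular factor model in terms of $\psi$-tilted integrals against $\RHO_h(\sigma)=(1+\sigma\vec r_h)/2$ with $\vec r_h\sim\fr$ i.i.d. To derive the lemma, I would plug in the Ising weight $\psi(\sigma,\tau)=1-(1-\eb)\vecone\{\sigma=\tau\}$. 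The identities $\sum_s\psi(\tau,s)\RHO_h(s)=1-(1-\eb)\RHO_h(\tau)$ and $\sum_{\sigma,\tau}\psi(\sigma,\tau)\RHO_1(\sigma)\RHO_2(\tau)=1-(1-\eb)\sum_\tau\RHO_1(\tau)\RHO_2(\tau)$ show that the two $\psi$-dependent ``cavity'' random variables coming out of Theorem~E.5 coincide exactly with the $X_1,X_2$ of Lemma~\ref{Prop_PD}, so the abstract bound reads $\Phi_d(\b)\le\phi_{\b,y}(\fr)$ verbatim.

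Second, I would give a concise account of why Theorem~E.5 holds, via the Franz-Leone interpolation along the family $\{\GG_t\}_{t\in[0,1]}$ sketched in Figure~\ref{Fig_interpolation}. At $t=0$ every one of the $dn/2$ matched pairs is replaced by two independent boundary factors $\RHO_h$ together with the compensating weight inherent in the $X_2$-normalisation, while at $t=1$ one recovers the genuine random regular graph $\GG$. Setting $\Psi(t):=\tfrac1{yn}\log\Erw[Z_{\GG_t}^y]$, the key claim is that $\Psi$ is non-increasing on $[0,1]$. The value $\Psi(0)$ factorises completely across vertices and edges of the synthetic graph and equals $\phi_{\b,y}(\fr)$ by direct computation, whereas $\Psi(1)$ converges to $\Phi_d(\b)$ after a standard concentration step (exchanging $\tfrac1y\log\Erw[Z^y]$ with $\Erw[\log Z]$ in the limit via Azuma's inequality \eqref{eqthm_elogz_p_2} combined with Jensen, upon sending $y\to0$ at the genuine endpoint — or more simply, noting that at $t=1$ the interpolation reduces to the Ising model for which concentration holds uniformly in $y$).

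The principal obstacle, and the heart of Theorem~E.5, is verifying $\Psi'(t)\le 0$. The derivative compares, under the $y$-tilted replicated measure, the contribution of swapping one synthetic boundary pair for one genuine Ising edge; because the boundary side is i.i.d.\ from $\fr$, the swap decouples across half-edges and reduces to a single-edge Hölder-type inequality of Franz-Leone/Panchenko type for the rank-one kernel $(\RHO_1,\RHO_2)\mapsto\langle\psi,\RHO_1\otimes\RHO_2\rangle$. Crucially, this inequality holds for any nonnegative symmetric $\psi$ — the product structure on the synthetic side absorbs the sign subtleties that would otherwise plague the antiferromagnetic case, where $\psi$ fails log-supermodularity — so nothing in the argument is specific to the ferromagnetic regime. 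Once the derivative bound is in place, $\Psi(1)\le\Psi(0)$ gives the lemma.
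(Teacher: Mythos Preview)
Your approach is correct and matches the paper's: the paper simply states that the lemma ``is an immediate consequence of \cite[Theorem~E.5]{Sly_2016}'' without further argument, and your first paragraph carries out exactly that specialisation (the identities for $X_1,X_2$ are right). Your additional sketch of the Franz--Leone interpolation behind Theorem~E.5 goes beyond what the paper provides; the one minor wrinkle is that the detour through $y\to0$ is unnecessary, since Jensen applied to the convex exponential already gives $\tfrac{1}{yn}\log\Erw[Z_{\GG}^y]\ge\tfrac1n\Erw[\log Z_{\GG}]$ for every fixed $y>0$, which is all that is needed at the $t=1$ endpoint.
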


Clearly, \eqref{eqProp_PD} is not exactly what we had in mind when aiming for an explicit expression of the upper bound for $\Phi_d(\beta)$. 
However, a key feature of \Lem~\ref{Prop_PD} is that the inequality holds for \textit{any} $y,\fr$.
We are thus free to choose these parameters so that we obtain a reasonable expression and, hopefully, at the same time a good upper bound.

Following physics intuition~\cite{MP1,MP2} we define the measure $\fr$ as follows.
Let $\delta_x\in\cP([-1,1])$ be the atom on $x\in[-1,1]$.
Then for $\a\in[0,1/2]$ we let
\begin{align}\label{eqrhoalpha}
	\fr_\a&=\a \delta_{-1}+(1-2\a) \delta_{0}+\a \delta_1\in\cP([-1,1]).
\end{align}
Intuitively, we `freeze' a spin to $+1$ or $-1$ with probability $\a$.
Otherwise, if the spin does not freeze we leave it unbiased, i.e., it takes either spin $\pm1$ with equal probability.
The following proposition shows that for the distribution $\fr$ from \eqref{eqrhoalpha} the function $\phi_{\b,y}(\fr)$ boils down to a manageable expression.

\begin{proof}[Proof of \Thm~\ref{Thm_1rsb}]
The theorem is an immediate consequence of \Lem~\ref{Prop_PD} and \Prop~\ref{Prop_Explicit_Yprime}. 
\end{proof}

What does the bound look like in the trivial case $\a=0$?
For any $y>0$ we obtain
\begin{align}
	\Phi_d(\b)
	\leq \phi_{\b,y}(\fr_0)
	&\leq \frac{1}{y}\bc{\log \Erw[X_1^y] - \frac{d}{2}\log \Erw[X_2^{y}] }= \frac{1}{y} \log \bc{2 \bc{\frac{1+\eb}{2}}^d}^y - \frac{d}{2y} \log \bc{\frac{1+\eb}{2}}^y \\
	&= \log 2 + \frac{d}{2} \log \frac{1+\eb}{2} .\label{eq_elog_loge_int}
\end{align}
Hence, we simply recover the first moment bound~\eqref{eqJensen}.
However, for large $\b$ the strictly positive $\a$ render a better bound.
The following proposition simplifies the expression from  \Thm~\ref{Thm_1rsb} for large $\b$. 
Recall $F$ from \eqref{eq_F_def}.

\begin{proposition}\label{Prop_Explicit_Yprime}
Let $d\geq 3,\b>0,0<z<1,0<\a<1/2$.
Then with $y=y(\b)=-\log(z)/\b$ we have
\[
\limb \frac1{\b y}\bc{\log \Erw \brk{X_1^{y}} -\frac{d}{2}\log \Erw[X_2^{y}]}
= F_d(\alpha, z)
. \]
\end{proposition}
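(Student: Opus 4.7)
My plan is to compute the two expectations $\Erw[X_1^y]$ and $\Erw[X_2^y]$ in the $\beta\to\infty$ limit and match the resulting expression against $F_d(\a,z)$. The starting observation is that $\beta y = -\log z$ is independent of $\beta$, hence
\[
\frac{1}{\beta y}\bc{\log\Erw[X_1^y] - \frac{d}{2}\log\Erw[X_2^y]} = -\frac{\log\Erw[X_1^y]}{\log z} + \frac{d\log\Erw[X_2^y]}{2\log z},
\]
and inspection of $F_d(\a,z)$ reduces the proposition to verifying the two limits
\[
\lim_{\b\to\infty}\Erw[X_1^y] = \zeta\cAda^d\xi,\qquad \lim_{\b\to\infty}\Erw[X_2^y] = 1-2\a^2+2\a^2z.
\]
Since the $\vec r_h$ take only three values, both expectations are finite sums and limits may be pulled inside them.

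For $X_2$ I would first rewrite $X_2 = 1-(1-\eb)(1+\vec r_1\vec r_2)/2$; its distribution depends only on $\vec r_1\vec r_2\in\{-1,0,+1\}$, with values $1$, $(1+\eb)/2$ and $\eb$ attained with probabilities $2\a^2$, $1-4\a^2$ and $2\a^2$ respectively. Using $\eb^y=z$, $1^y=1$ and $((1+\eb)/2)^y\to 1$ (the base is bounded and $y\to 0$), term-by-term passage to the limit gives $1-2\a^2+2\a^2 z$ as claimed.

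For $X_1$, write $k_\pm = |\{h:\vec r_h = \pm 1\}|$. A short case analysis of each factor of $X_1$ on $\vec r_h\in\{-1,0,+1\}$ and $\tau\in\{\pm 1\}$ gives
\[
X_1 = \bc{\eul^{-\b k_+} + \eul^{-\b k_-}}\bc{\frac{1+\eb}{2}}^{d-k_+-k_-}.
\]
Raising to the power $y$, the second factor tends to $1$, while $(\eul^{-\b k_+}+\eul^{-\b k_-})^y = \eul^{-\b y\min(k_+,k_-)}(1+\eul^{-\b|k_+-k_-|})^y \to z^{\min(k_+,k_-)}$. Summing over the finite sample space of $(\vec r_1,\ldots,\vec r_d)$ yields $\Erw[X_1^y]\to\Erw[z^{\min(k_+,k_-)}]$.

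The main obstacle, and the heart of the proof, will be identifying $\Erw[z^{\min(k_+,k_-)}]$ with the matrix bilinear form $\zeta\cAda^d\xi$. Setting $S_m = \vec r_1+\cdots+\vec r_m$ and $k=k_++k_-$, the identity $\min(a,b)=(a+b-|a-b|)/2$ together with $|S_d|=|k_+-k_-|$ yields $\min(k_+,k_-)=(k-|S_d|)/2$, so $z^{\min(k_+,k_-)} = z^{k/2}\cdot z^{-|S_d|/2}$. The absolute-value walk $(|S_m|)_{0\le m\le d}$ on $\{0,1,\ldots,d\}$ has transitions: stay with probability $1-2\a$, or jump to a neighbour (reflecting at $0$) with total probability $2\a$. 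These are precisely the entries of $(1-2\a)\id + 2\a\cM$, so $\cAda$ adds an extra factor $\sqrt z$ on each move step; accumulated over all nonzero $\vec r_h$ this contributes $z^{k/2}$, while the vector $\xi$ weights the terminal state $|S_d|$ by $z^{-|S_d|/2}$. Expanding $\zeta\cAda^d\xi$ as a sum over length-$d$ paths starting at $0$ then reproduces $\Erw[z^{k/2}\cdot z^{-|S_d|/2}] = \Erw[z^{\min(k_+,k_-)}]$. The boundary row of $\cM$ at state $d$ is harmless since $|S_m|\le m$ prevents the walk from visiting $d$ before step $d$ itself. Recognising this $(d+1)$-state absolute-value random walk hidden inside $\cAda$ is the main conceptual step; once it is in place, the identification is a direct path-by-path check.
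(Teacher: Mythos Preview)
Your proposal is correct and follows essentially the same route as the paper: the paper splits the computation into Lemmas for $X_2$ and $X_1$ just as you do, with the same three-case analysis of $\vec r_1\vec r_2$ for $X_2$ and the same reflecting random-walk interpretation (via $\min(k_+,k_-)=(k-|S_d|)/2$ and the identification of $\cA$ with a move-weighted transition matrix) for matching $\Erw[z^{\min(k_+,k_-)}]$ to $\zeta\cA^d\xi$. Your explicit remark that the reflecting boundary of $\cM$ at state $d$ is harmless because $|S_m|\le m$ is a small clarification the paper leaves implicit.
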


\begin{proof}[Proof of Corollary~\ref{cor_max_cut}]
For any $d$-regular graph $G$ on $n$ vertices and any $\b>0$, we have
\begin{align*}
    \frac2{dn} \mbox{\sc MaxCut}(G)
    =1-\frac2{dn}\min_{\sigma\in\cbc{\pm1}^n}\cH_{G}(\sigma)
    \leq 1 + \frac {2}{\b dn}\log Z_\b(G).
\end{align*}

Thus by \Thm~\ref{Thm_1rsb}, we obtain
\begin{align}\label{eqcor_max_cut1}
    \limsup_{n\to\infty}\frac2{dn}\Erw[\ecutm]
    &\leq 1 + \frac {2}{d}\lim_{\b\to\infty}\Phi_d(\b)/\b \\
    &\leq 1 + \frac 2d  \inf_{\substack{0<\a\leq1/2\\0<z<1}} F_d(\a,z) .
\end{align}
The corollary follows.
\end{proof}

\subsection{Organisation}

Let us outline how the remainder of the paper is organized. \Sec~\ref{sec_thm_1} is devoted to proving \Prop~\ref{prop_recon_energy}, while \Sec~\ref{sec_thm_2} contains the proof of \Prop~\ref{prop_Taylor}. Jointly, the two sections provide the missing pieces for the proof of \Thm~\ref{thm_elogz}. Finally, in \Sec~\ref{sec_cor} we prove the two key \Lem s~\ref{Lem_Explicit_Yprime} and \ref{Lem_Explicit_Y} needed for the proof of \Cor~\ref{cor_max_cut}.

\subsection{Notation}
We will denote a random $d$-regular graph on $n$ vertices by $\GG(n,d)$.
When the context is clear, we will simply write $\GG = \GG(n,d)$.
We tacitly assume that $dn$ is even.
Throughout the paper we will use standard Landau notation with the usual symbols $o(\cdot), O(\cdot), \Theta(\cdot), \omega(\cdot)$ and $\Omega(\cdot)$.
These symbols refer to the limit $n\to\infty$ by default, but may refer to other limits where specified.

For a subset $I\subset\RR$ we denote by $\cP(I)$ the set of all Borel probability measures on $I$.
Moreover, for a finite set $\Omega\neq\emptyset$ let $\cP(\Omega)$ be the set of all probability distributions on $\Omega$.
We recall that the entropy $H(\mu)$ of such a probability distribution $\mu \in \cP(\Omega)$ is defined as
\begin{align*}
    H(\mu) = - \sum_{\omega \in \Omega} \mu(\omega) \log \mu(\omega).
\end{align*}
We will also need the Kullback-Leibler divergence of $\mu, \nu \in \cP(\Omega)$, defined as
\begin{align*}
	\KL{\mu}{\nu} = \sum_{\omega \in \Omega} \mu(\omega) \log \frac{\mu(\omega)}{\nu(\omega)}\in[0,\infty],
\end{align*}
with the conventions $0\log 0=0$, $0\log\frac{0}{0}=0$ and $-\log0=\infty$.

\section{Discussion} \label{sec_discussion}

\noindent
In this section we relate the contributions of the present paper to prior work.
We begin with the statistical physics perspective.

\subsection{Replica symmetry breaking}
The Ising model, proposed by Lenz in 1920 \cite{Lenz_1920}, has become a cornerstone of statistical physics generally~\cite{Friedli_2017,Huang_2009}.
Moreover, the Ising model on random graphs in particular has proved a testbed for the investigation of the idea of replica symmetry breaking that was proposed by \Mezard\ and Parisi on the basis of the non-rigorous `cavity method'~\cite{MP1,MP2}.
The corroboration of the cavity method's predictions for the ferromagnetic Ising model on \ER\ graphs by Dembo and Montanari~\cite{Dembo_2010} was a first success, although replica symmetry breaking does not occur in this model.
The proof was based on the analysis of the Belief Propagation recurrences on random trees.
These techniques have subsequently been extended to the Potts model, a generalisation of the Ising model with more than two possible spin values~\cite{Dembo_2013, Dembo_2014}.

The antiferromagnetic version of the Potts and Ising models is closely related to the stochastic block model.
The results of Mossel, Neeman and Sly~\cite{Mossel_2016} on the block model with two communities therefore imply the existence and location of a replica symmetry breaking phase transition in the Ising antiferromagnet on the \ER\ graph.
Thus, as we saw above a new contribution of the present paper is the extension to random regular graphs.
Moreover, results from Coja-Oghlan, Krzakala, Perkins and Zdeborov\'a~\cite{Coja_2018_cavity} imply the existence and location of a replica symmetry breaking phase transition for the Potts model on \ER\ graphs.
The recent work of Coja-Oghlan, Hahn-Klimroth, Loick, M\"uller, Panagiotou and Pasch~\cite{Coja_2020} extend these results to graphs with given degree sequences.
However, the results from~\cite{Coja_2020} determine the location of the replica symmetry breaking phase transition only implicitly as the solution to an infinite-dimensional variational problem.
Thus, the contribution of \Thm~\ref{thm_elogz} is the explicit analytic formula for the phase transition $\beta^*(d)$, which matches the combinatorially meaningful Kesten-Stigum bound~\cite{Kesten_1966}.

Apart from the Potts and Ising models, replica symmetry breaking phase transitions have been pinpointed in several other models.
Examples include random (hyper)graph colouring, several other random constraint satisfaction problems and further models from mathematical physics, such as the Viana-Bray spin glass model \cite{Guerra_2004}.
But usually the formula for the phase transition comes in as a complicated variational problem.
Indeed, the question whether the replica symmetry breaking transition equals the explicit Kesten-Stigum threshold has been linked to the order of the phase transition~\cite{Ricci_2019}, a question that merits further rigorous attention.

\subsection{The {\sc Max Cut} problem}
The semidefinite programming based {\sc Max Cut} algorithm of Goemans and Williamson \cite{Goemans_1995} has been one of the most important contributions to algorithms research.
The algorithm achieves an approximation ratio of $\min_{0 \leq \theta \leq \pi} \frac 2\pi \frac{\theta}{1-\cos(\theta)}\approx0.878$ on graphs with non-negative edge weights.
On regular graphs better approximation ratios can be achieved (also via semidefinite programming)~\cite{Feige_2002}.
The question whether the Goemans-Williamson approximation ratio is optimal has sparked an important line of research.
H\r{a}stad~\cite{Hastad_2001} derived from the PCP theorem that no approximation better than $0.941$ can be attained unless P$=$NP.
Moreover, Koth~\cite{Koth_2002} showed that the unique games conjecture implies the optimality of the Goemans-Williamson approximation ratio; see Barak \cite{Barak_2014} for a discussion.

Given the great interest in {\sc Max Cut} generally, it is hardly surprising that the problem has been studied intensively on random graphs, too.
In the classical combinatorics literature upper bounds have typically been based on the first moment method, while greedy algorithms were employed to derive lower bounds \cite{Bertoni_1997,Coja_2003, Coppersmith_2004,Csoka_2016,Diaz_2003,Diaz_2007,Kalapala_2002,Kardos_2012}.
A semidefinite programming approach was taken in \cite{Montanari_2016} on \Erdos-\Renyi\ graphs.
Table~\ref{Tab_mc} summarises the best explicit prior bounds for random regular graphs.
Naturally, arguments based on the method of moments or greedy algorithms suffer from the shortcoming of being inherently local, i.e., confined to short-range interactions. 
In effect, they remain oblivious to the long-range interactions that, according to physics prediction, shape the {\sc Max Cut} problem on random graphs.
Therefore, it is unsurprising that these techniques only carry so far.

The first complex model where the long-range interactions predicted by the theory of replica symmetry breaking were well understood is the Sherrington-Kirkpatrick spin glass.
The model can be viewed as a weighted {\sc Max Cut} problem on a complete graph.
Specifically, the weight of the edge between vertices $v,w$ is a Gaussian $\vJ_{v,w}$.
The random variables $(\vJ_{v,w})_{1\leq v<w\leq n}$ are mutually independent.
Hence, the model is described by the random Hamiltonian
\begin{align*}
    \cH_{\text{SK}}(\sigma) &= - \frac{1}{\sqrt{n}} \sum_{1\leq i<j \leq n} \vJ_{ij} \sigma_i \sigma_j \qquad(\sigma \in \cbc{\pm 1}^{n}),
\end{align*}
which induces a partition function and a Boltzmann distribution as in \eqref{eq_boltzmann}.
Clearly, the `ground state energy' $\min_{\sigma}\cH_{\text{SK}}(\sigma)$ corresponds to the maximum cut weight.
Parisi's seminal work~\cite{Parisi_1980} predicted formulas for the free energy and the ground state energy of the Sherrington-Kirkpatrick model.
After several decades Talagrand established the `Parisi formula' rigorously~\cite{Talagrand_2006}.
An important ingredient to this work was the interpolation method, which Guerra had proposed~\cite{Guerra_2003}.
Panchenko developed a different argument~\cite{Panchenko_2013_2}, which also led to a proof of Parisi's ultrametricity conjecture~\cite{Panchenko_2013}.

Franz and Leone~\cite{Franz_2003} extended the interpolation method to sparse random graphs; see also~\cite{Panchenko_2004}.
The version of the interpolation method quoted in \Lem~\ref{Prop_PD} is an adaptation to random regular graphs. 
Furthermore, Dembo, Montanari and Sen~\cite{Dembo_2017} used interpolation techniques to strike a chord between the sparse \ER\ graph and the Sherrington-Kirkpatrick model.
Specifically, they proved that
\begin{align}\label{eqDembo}
    \lim_{d\to\infty}\lim_{n \to \infty} \frac2{\sqrt dn}\brk{ \mbox{\sc MaxCut}(\GG(n,d/n))-\frac{dn}{4} }= p_\star\approx 0.7632,
\end{align}
where $p_\star$ derives from the ground state energy of the Sherrington-Kirkpatrick model.
Conceptually it seems natural to expect that the Sherrington-Kirkpatrick model occurs as the limit of sparse random graphs as the average degree gets large, basically due to central limit theorem-like effects.
Yet this result says nothing about any finite $d$ and, indeed, sparse random graphs for fixed finite values of $d$ appear to exhibit a more diverse and potentially even more intricate behavior.
As a result, we are only just beginning to understand the genuine behaviour of sparse models in the replica symmetry breaking phase; see, e.g., \cite{Bartha_2019}.

Finally, Panchenko~\cite{Panchenko_2013_3} obtained a variational formula for the free energy of the Ising antiferromagnet on \ER\ graphs.
The formula involves an optimisation over exchangeable distributions on $\{\pm1\}^{\NN\times\NN}$ subject to certain invariance conditions.
Coja-Oghlan and Perkins~\cite{Coja_2019} extended this result to random regular graphs, also pointing out that a corresponding variational formula can be derived for the {\sc Max Cut} of $\GG(n,d)$ for any fixed $d$.
However, the formula is not explicit, and it appears difficult (to put it mildly) to extract any numerical estimates.
Thus, the contribution of \Cor~\ref{cor_max_cut} is that we obtain a (relatively) simple explicit formula that incorporates at least the first level of the physicists' replica symmetry breaking formalism.

\subsection{The stochastic block model}
The Ising antiferromagnet is intimately related to the stochastic block model which has gained significant attention in recent years~\cite{Abbe_2017}.
The model provides a benchmark for both Bayesian inference and graph clustering, the basic idea being to create a random graph with a community structure.
In the simplest version the vertex set is partitioned into $q$ communities and edges between vertices in the same community are either more likely (assortative) or less (disassortative).
The question is for what discrepancy of edge densities it is possible to at least partially recover the community structure or, less ambitiously, to at least discriminate a random graph drawn from the block model from a null model.
The modern study of the stochastic block model originated with conjectures that Decelle, Krzakala, Moore and Zdeborov\'a~\cite{Decelle_2011} derived via the cavity method.
Specifically, they predicted a phase diagram that splits the model parameters into regions where recovering the community structure is information-theoretically and/or algorithmically feasible.

Mathematically the most complete picture exists for graphs with independent edges in the case of $q=2$ communities.
In this case the information-theoretic and algorithmic thresholds were established in a series of papers by Mossel, Neeman and Sly~\cite{Mossel_2015, Mossel_2015_2, Mossel_2016, Mossel_2018} and Massoulie~\cite{Massoulie_2014}.
For $q>2$ communities algorithms that match the conjecture from~\cite{Decelle_2011} have been proposed by Abbe and Sandon~\cite{Abbe_2018} and Bordenave, Lelarge and Massoulie~\cite{Bordenave_2015}.
As explained above, the contribution of \Thm~\ref{Thm_sbm} is to show that for the disassortative regular case with two communities the information-theoretic threshold equals the explicit Kesten-Stigum bound $\b^*(d)$.
Finally, as an interesting direction for future research we point to the question of developing an efficient algorithm that (partially) recovers the community structure $\vec\sigma^*$ for $\b>\b^*(d)$.

\section{Proof of \Prop~\ref{prop_recon_energy}} \label{sec_thm_1}

\noindent
The proof of \Prop~\ref{prop_recon_energy} requires several steps.
First we perform some preparatory calculations; in particular, we compute the first moment of the partition function of a random multi-graph drawn from the pairing model.
Subsequently we establish a relationship between the stochastic block model $\GG^*$ and the `null model' $\GG$.
Then we construct a coupling of the spin configuration around a typical vertex of $\GG^*$ with the broadcasting process from \Lem~\ref{lem_reconstruction} to estimate the probability that $\GG^*\in\cO$.
Finally, we perform a truncated moment computation to obtain \eqref{eqO}.

\subsection{The pairing model}\label{sec_firstmmt}
In order to calculate the first moment, as well as for some of the manoeuvres to follow, it will be convenient to replace the simple random $d$-regular graph $\GG$ by a random graph chosen from the pairing model.
Hence, think of the elements of $V_n\times[d]$ as vertex clones.
Moreover, let $\vec\Gamma$ be a random perfect matching of the complete graph on $V_n\times[d]$.
Finally, let $\G$ be the $d$-regular multigraph on $V_n$ obtained by contracting the clones $V_n\times[d]$.
With $\cS$ the set of all simple graphs, it is well known that
\begin{align}\label{eqsimple}
	\pr\brk{\G\in\cS}&=\Omega(1)&\mbox{and}&&\pr\brk{\GG\in\cE}&=\pr\brk{\G\in\cE\mid\cS}\qquad\mbox{for any event }\cE.
\end{align}
In order to compute the first moment $\ex[Z_{\GG,\b}]$ we will compute $\ex[Z_{\G,\b}]$ and then investigate the impact of conditioning on $\cS$.

To calculate $\ex[Z_{\G,\b}]$ we proceed as follows.
For $\sigma\in\{\pm1\}^{V_n}$ let $\rho(\sigma)=(\rho_{1}(\sigma),\rho_{-1}(\sigma))$ be the distribution on $\pm1$ defined by
\begin{align*}
	\rho_1(\sigma)&=\frac{1}{n}\sum_{i=1}^n\vecone\{\sigma_{v_i}=1\},&
	\rho_{-1}(\sigma)&=\frac{1}{n}\sum_{i=1}^n\vecone\{\sigma_{v_i}=-1\}.
\end{align*}
Thanks to the linearity of expectation we can write the first moment as
\begin{align*}
	\ex[Z_{\G,\b}]&=\sum_{\sigma\in\{\pm1\}^{V_n}}\ex[\psi_{\G,\b}(\sigma)].
\end{align*}
Naturally, $\psi_{\G,\b}(\sigma)$ depends on the number of edges that join vertices with the same spin.
Hence, to calculate $\ex[\psi_{\G,\b}(\sigma)]$ we need to know the number of graphs with a given number of such edges.

The following lemma solves this problem.
Let $\cM(\sigma)$ be the set of all probability distributions 
\begin{align}\label{eqmuconstr}
	\mu_{11}+\mu_{1-1}&=\rho_1,&
	\mu_{-1-1}+\mu_{1-1}&=\rho_{-1},&
	\mu_{1-1}&=\mu_{-11}
\end{align}
and such that $\mu_{11}dn,\mu_{-1-1}dn$ are even integers and $\mu_{1-1}dn$ is an integer.
Moreover, let $\cG(\sigma,\mu)$ be the event that $\G$ has $\mu_{11}dn/2$ edges that join vertices $v,w$ with $\sigma_v,\sigma_w=1$.
Then due to regularity there are $\mu_{-1-1}dn/2$ edges joining vertices that both carry a $-1$ spin and $\mu_{1-1}dn$ edges that connect vertices with opposite spins.

\begin{lemma}\label{Lemma_mu}
	For $\sigma\in\{\pm1\}^{V_n}$ and $\mu\in\cM(\sigma)$ we have 
	\begin{align*}
		\pr\brk{\G\in\cG(\sigma,\mu)}&=\binom{dn\rho_1(\sigma)}{dn\mu_{11}}\binom{dn\rho_{-1}(\sigma)}{dn\mu_{-1-1}}\frac{(dn\mu_{11}-1)!!(dn\mu_{-1-1}-1)!!(dn\mu_{1-1})!}{(dn-1)!!}.
	\end{align*}
\end{lemma}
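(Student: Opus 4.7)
The strategy is a direct enumeration in the pairing model. Because $\vec\Gamma$ is a uniformly random perfect matching of the $dn$ clones in $V_n\times[d]$, and the spin type $\{\sigma_v,\sigma_w\}$ of an edge $\{v,w\}$ of $\G$ is determined by which clones are paired, it suffices to count the number of perfect matchings $\vec\Gamma$ producing a multigraph in $\cG(\sigma,\mu)$ and divide by the total $(dn-1)!!$. Partition the clones into $V_+\times[d]$ and $V_-\times[d]$ of sizes $dn\rho_1(\sigma)$ and $dn\rho_{-1}(\sigma)$ respectively, according to the sign of the spin of the associated vertex; these sizes are integers since $\rho_{\pm 1}(\sigma)n\in\ZZ$.

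Any perfect matching $\vec\Gamma$ realising the event $\cG(\sigma,\mu)$ decomposes uniquely into three parts: (i) a matching of $dn\mu_{11}$ positive clones amongst themselves, accounting for the $dn\mu_{11}/2$ monochromatic $+1$ edges; (ii) a matching of $dn\mu_{-1-1}$ negative clones amongst themselves, accounting for the $dn\mu_{-1-1}/2$ monochromatic $-1$ edges; (iii) a bijection between the remaining $dn\rho_1(\sigma)-dn\mu_{11}=dn\mu_{1-1}$ positive clones and the remaining $dn\rho_{-1}(\sigma)-dn\mu_{-1-1}=dn\mu_{1-1}$ negative clones, which accounts for the $dn\mu_{1-1}$ bichromatic edges; here we used the marginal relations \eqref{eqmuconstr} and the symmetry $\mu_{1-1}=\mu_{-11}$.

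Now I enumerate: there are $\binom{dn\rho_1(\sigma)}{dn\mu_{11}}$ ways to choose which positive clones go into part~(i), and given that choice, $(dn\mu_{11}-1)!!$ ways to perfectly match them (well-defined since $dn\mu_{11}$ is assumed even). Analogously, parts~(i) of the negative side contribute the factor $\binom{dn\rho_{-1}(\sigma)}{dn\mu_{-1-1}}(dn\mu_{-1-1}-1)!!$. Finally, once the clones entering part~(iii) are determined as the complements, the bijection in~(iii) is any one of $(dn\mu_{1-1})!$ orderings. Multiplying these factors, dividing by the total $(dn-1)!!$, and invoking the uniformity of $\vec\Gamma$ yields the claimed formula.

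The proof involves no substantive obstacle; the only points requiring a moment of attention are that the parity conditions in the definition of $\cM(\sigma)$ are exactly what is needed for the two double factorials $(dn\mu_{\pm1\pm1}-1)!!$ to count perfect matchings on even sets, and that the equality of the two leftover set sizes in part~(iii) (so that a bijection exists) is precisely the content of the constraints $\mu_{11}+\mu_{1-1}=\rho_1(\sigma)$ and $\mu_{-1-1}+\mu_{1-1}=\rho_{-1}(\sigma)$ from \eqref{eqmuconstr}.
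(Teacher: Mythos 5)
Your proposal is correct and is essentially the same argument as the paper's proof: the paper likewise counts matchings realising $\cG(\sigma,\mu)$ by choosing the clones of each spin that form monochromatic edges (the two binomial coefficients), matching them internally (the double factorials), matching the leftover clones across the two spin classes (the factor $(dn\mu_{1-1})!$), and dividing by the total number $(dn-1)!!$ of perfect matchings. Your write-up merely makes explicit the parity and cardinality checks that the paper leaves implicit.
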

\begin{proof}
	The denominator $(dn-1)!!$ simply counts the total number of possible perfect matchings $\vec\Gamma$.
	Moreover, the two binomial coefficients account for the number of ways of selecting clones of vertices with spin $\pm1$ to constitute edges of the four possible types.
	Finally, the numerator equals the number of possible ways to match the clones up according to these designated types.
\end{proof}

As it stands the formula from \Lem~\ref{Lemma_mu} does not yet lend itself to asymptotical calculations.
But Stirling's formula yields the following approximation.
\begin{corollary}\label{Cor_mu}
	For $\sigma\in\{\pm1\}^{V_n}$ and $\mu\in\cM(\sigma)$ we have 
	$ \pr\brk{\G\in\cG(\sigma,\mu)}=\exp\bc{-\frac{dn}{2}\KL{\mu}{\rho\tensor\rho}+O(\log n)}.  $
\end{corollary}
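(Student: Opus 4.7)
The plan is to apply Stirling's formula to every factorial and double factorial in the expression from Lemma~\ref{Lemma_mu} and then identify the resulting exponent with $-\tfrac{dn}{2}\KL{\mu}{\rho\tensor\rho}$. Since each quantity in the formula has magnitude $\Theta(n)$, the additive Stirling error of $O(\log n)$ per factorial accumulates to only $O(\log n)$ overall, which is absorbed into the stated error term.

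The first step is to eliminate the double factorials using the identity $(2k-1)!! = (2k)!/(2^k\, k!)$, valid for every even argument $2k$ that appears (note that $dn\mu_{11}$, $dn\mu_{-1-1}$ and $dn$ are all even by the definition of $\cM(\sigma)$ and the assumption that $dn$ is even). After substitution, the $(dn\mu_{11})!$ and $(dn\mu_{-1-1})!$ arising from the double factorials cancel with the denominators of the two binomial coefficients, and after elementary simplification one is left with
\begin{align*}
\pr\brk{\G\in\cG(\sigma,\mu)} = \frac{(dn\rho_1)!\,(dn\rho_{-1})!\,(dn/2)!\,2^{dn\mu_{1-1}}}{(dn)!\,(dn\mu_{11}/2)!\,(dn\mu_{-1-1}/2)!\,(dn\mu_{1-1})!}.
\end{align*}
Here the exponent $dn\mu_{1-1}$ on the factor of $2$ arises from $dn/2 - dn\mu_{11}/2 - dn\mu_{-1-1}/2 = dn\mu_{1-1}$, which follows from the normalisation $\mu_{11}+2\mu_{1-1}+\mu_{-1-1}=1$.

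The second step is to take logarithms and apply $\log k! = k\log k - k + O(\log n)$ term by term. A short bookkeeping verifies that the coefficients of $\log(dn)$ in the numerator and denominator both equal $3dn/2$ and thus cancel; the linear $-k$ contributions similarly cancel; and using once more $\mu_{11}+\mu_{-1-1} = 1-2\mu_{1-1}$ one checks that the $\log 2$ contributions (from $(dn/2)!$, from the two $(dn\mu_{\pm1\pm1}/2)!$ and from the explicit $2^{dn\mu_{1-1}}$) also cancel exactly. What remains is
\begin{align*}
\log \pr\brk{\G\in\cG(\sigma,\mu)} = dn\rho_1\log\rho_1 + dn\rho_{-1}\log\rho_{-1} - \tfrac{dn}{2}\mu_{11}\log\mu_{11} - \tfrac{dn}{2}\mu_{-1-1}\log\mu_{-1-1} - dn\mu_{1-1}\log\mu_{1-1} + O(\log n).
\end{align*}

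The third and final step is to match this with the claimed $-\tfrac{dn}{2}\KL{\mu}{\rho\tensor\rho}$. Writing out
\begin{align*}
\KL{\mu}{\rho\tensor\rho} = \mu_{11}\log\tfrac{\mu_{11}}{\rho_1^2} + \mu_{-1-1}\log\tfrac{\mu_{-1-1}}{\rho_{-1}^2} + 2\mu_{1-1}\log\tfrac{\mu_{1-1}}{\rho_1\rho_{-1}},
\end{align*}
expanding, and using the marginal constraints $\mu_{11}+\mu_{1-1}=\rho_1$ and $\mu_{-1-1}+\mu_{1-1}=\rho_{-1}$ to collect the $\log\rho_{\pm1}$ terms into $dn\rho_1\log\rho_1 + dn\rho_{-1}\log\rho_{-1}$, one obtains exactly the expression above. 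This completes the proof. The computation is entirely routine; the only place one must be careful is the bookkeeping of the $\log 2$ terms contributed by the double-factorial conversion, and this is precisely where the condition $\mu_{1-1}=\mu_{-11}$ built into $\cM(\sigma)$ ensures that the combinatorics balances.
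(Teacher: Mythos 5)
Your proposal is correct and follows essentially the same route as the paper: both convert the double factorials via $(2k-1)!!=(2k)!/(2^k\,k!)$, apply Stirling's formula, and use the marginal constraints of $\cM(\sigma)$ to identify the exponent with $-\tfrac{dn}{2}\KL{\mu}{\rho\tensor\rho}$; the only difference is that the paper groups the intermediate quantities into entropy terms $H(\mu)$ and $H(\rho)$ while you expand the single factorial ratio directly, which is a purely cosmetic distinction.
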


\noindent
\Cor~\ref{Cor_mu} follows from a more general lemma about partitions of random regular graphs from~\cite{Coja_2016}.
But since we will encounter similar calculations again in due course and because the proof is quite short, we include it here.
We need Stirling's formula
\begin{align}\label{eqStirling}
	k!&=\sqrt{2\pi k}\bcfr k\eul^k\exp(O(1/k))
\end{align}
and the elementary formula
\begin{align}\label{eqdouble}
	(2k-1)!!&=\frac{(2k)!}{k!2^k}.
\end{align}

\begin{proof}[Proof of \Cor~\ref{Cor_mu}]
	Applying \eqref{eqdouble}, we obtain
\begin{align*}
	(dn\mu_{11}-1)!!&=\frac{(dn\mu_{11})!}{2^{dn\mu_{11}/2}(dn\mu_{11}/2)!}&
	(dn\mu_{-1-1}-1)!!&=\frac{(dn\mu_{-1-1})!}{2^{dn\mu_{-1-1}/2}(dn\mu_{-1-1}/2)!},&
	(dn-1)!!&=\frac{(dn)!}{2^{dn/2}(dn/2)!}.
\end{align*}
Hence,
	\begin{align}\nonumber
		\frac{(dn\mu_{11}-1)!!(dn\mu_{-1-1}-1)!!(dn\mu_{1-1})!}{(dn-1)!!}&=2^{dn(1-\mu_{11}-\mu_{-1-1})/2}\binom{dn\mu_{1-1}}{dn\mu_{1-1}/2}^{-1}\binom{dn/2}{dn\mu/2}\binom{dn}{dn\mu}^{-1}\\
																		 &=2^{dn\mu_{1-1}}\binom{dn\mu_{1-1}}{dn\mu_{1-1}/2}^{-1}\binom{dn/2}{dn\mu/2}\binom{dn}{dn\mu}^{-1}.
																		 \label{eqCor_mu1}
	\end{align}
Thus, Stirling's formula \eqref{eqStirling} gives
\begin{align}
	\frac{(dn\mu_{11}-1)!!(dn\mu_{-1-1}-1)!!(dn\mu_{1-1})!}{(dn-1)!!}&=\exp\bc{-dnH(\mu)/2+O(\log n)}.
																		 \label{eqCor_mu2}
	\end{align}
	Further, combining \eqref{eqmuconstr} and \eqref{eqStirling}, we obtain
	\begin{align}\label{eqCor_mu3}
		\binom{dn\rho_1(\sigma)}{dn\mu_{11}}\binom{dn\rho_{-1}(\sigma)}{dn\mu_{-1-1}}&=\exp\bc{dn(H(\mu)-H(\rho(\sigma))+O(\log n)}.
	\end{align}
	Finally, combining \Lem~\ref{Lemma_mu} with \eqref{eqCor_mu2} and \eqref{eqCor_mu3}, we obtain
	\begin{align*}
		\pr\brk{\G\in\cG(\sigma,\mu)}&=\exp\bc{dn(H(\mu)-2H(\rho(\sigma)))/2+O(\log n)}
		=\exp\bc{dn(H(\mu)-H(\rho(\sigma)\tensor\rho(\sigma)))/2+O(\log n)}\\
									 &=
									 \exp\bc{-dn\KL\mu{\rho(\sigma)\tensor\rho(\sigma)}/2+O(\log n)},
	\end{align*}
	as claimed.
\end{proof}

Let $\cM_n=\bigcup_{\sigma\in\{\pm1\}^{V_n}}\cM(\sigma)$ be the set of all conceivable distributions $\mu$.
Moreover, for $\mu\in\cM_n$ set $\rho_1(\mu)=\mu_{11}+\mu_{1-1}$ and $\rho_{-1}(\mu)=1-\rho_1(\mu)$.
Additionally, let $\mu^*=\mu^*_{\b}$ be the distribution
\begin{align}\label{eqvarphimax}
	\mu^*_{11}&=\mu^*_{-1-1}=\frac{1}{2(1+\eul^{\b})},&
	\mu^*_{1-1}&=\mu^*_{-11}=\frac{\eul^{\b}}{2(1+\eul^{\b})}.
\end{align}
Furthermore, let $\cM_n^*$ be the set of all $\mu\in\cM_n$ such that $\dTV(\mu,\mu^*)<n^{-0.49}$.
Finally, let $\cG(\mu)$ be the set of all pairs $(G,\sigma)$ such that $\sigma\in\{\pm1\}^{V_n}$ satisfies $\rho_1(\sigma)=\rho_1(\mu)$ and $G\in\cG(\sigma,\mu)$.
The following lemma supplies the promised formula for the first moment of $Z_{\G,\b}$.

\begin{lemma}\label{Lemma_mean}
For all $d\geq3,\b>0$ we have 
\begin{align}\label{eqLemma_mean}
	\ex[Z_{\G,\b}]&=(1+\exp(-n^{\Omega(1)}))\sum_{\mu\in\cM_n^*}|\cG(\mu)|\exp\bc{-\frac{dn}{2}\bc{\mu_{11}+\mu_{-1-1}}}
	=\Theta\bc{2^n\bcfr{1+\eul^{-\b}}{2}^{dn/2}}.
\end{align}
\end{lemma}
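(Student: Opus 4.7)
The plan is to compute the first moment directly in the pairing model, decompose the sum over $\sigma \in \{\pm 1\}^{V_n}$ by the induced edge-type $\mu$, and then localise the sum at the unique optimiser $\mu = \mu^*$ from \eqref{eqvarphimax} via Laplace's method. The starting point is the observation that on the event $\cG(\sigma,\mu)$ the Hamiltonian is identically $\cH_{\G}(\sigma) = \tfrac{dn}{2}(\mu_{11}+\mu_{-1-1})$ by the definition of $\mu$. Therefore
\[
\ex[Z_{\G,\b}] = \sum_{\sigma \in \{\pm 1\}^{V_n}} \sum_{\mu \in \cM(\sigma)} \pr\brk{\G \in \cG(\sigma,\mu)}\exp\bc{-\tfrac{\b dn}{2}(\mu_{11}+\mu_{-1-1})}.
\]
Swapping the order of summation and using the fact that $\pr[\G \in \cG(\sigma,\mu)]$ depends on $\sigma$ only through $\rho(\sigma)=\rho(\mu)$ (by the permutation symmetry of the pairing model), I bundle the inner sum into $|\cG(\mu)|$ as defined in the statement, yielding the leftmost equality in \eqref{eqLemma_mean} up to the truncation to $\cM_n^*$.

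The next step is to identify where the sum concentrates. Applying \Cor~\ref{Cor_mu} together with Stirling's formula \eqref{eqStirling} for $\binom{n}{n\rho_1(\mu)}$, the logarithm of each summand equals, up to $O(\log n)$,
\[
\Psi(\mu) \;=\; nH(\rho(\mu)) \;-\; \tfrac{dn}{2}\KL{\mu}{\rho(\mu)\tensor\rho(\mu)} \;-\; \tfrac{\b dn}{2}(\mu_{11}+\mu_{-1-1}).
\]
After eliminating the constraints $\mu_{11}+\mu_{1-1}=\rho_1$, $\mu_{-1-1}+\mu_{1-1}=\rho_{-1}$, $\mu_{1-1}=\mu_{-11}$, only the two free parameters $\rho_1$ and $\mu_{1-1}$ remain. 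A straightforward Lagrange multiplier calculation shows that the unique interior critical point sits at $\rho_1 = 1/2$ and $\mu = \mu^*$. Evaluating $\Psi$ at this point,
\[
\Psi(\mu^*) \;=\; n\log 2 \;+\; \tfrac{dn}{2}\log\tfrac{1+\eul^{-\b}}{2} \;+\; O(\log n),
\]
which is precisely the target asymptotic on the right of \eqref{eqLemma_mean}.

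The remaining step, which I expect to be the main obstacle, is the Laplace concentration that truncates the sum to $\cM_n^*$. For this I would verify that the Hessian of $\Psi$ at $\mu^*$ is strictly negative definite, which follows from strict convexity of $\pi \mapsto \KL{\pi}{\rho \tensor \rho}$ in its first argument combined with smoothness of the affine Boltzmann term and of the marginal entropy. A uniform curvature bound in a neighbourhood of $\mu^*$ then gives
\[
\Psi(\mu^*) - \Psi(\mu) \;\geq\; \Omega\bc{n\cdot \dTV(\mu,\mu^*)^2} \;\geq\; \Omega(n^{0.02})
\]
for every $\mu \in \cM_n \setminus \cM_n^*$ lying in this neighbourhood. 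Because $\mu^*$ sits strictly in the interior (all entries positive), $\Psi$ extends continuously to the compact domain and has $\mu^*$ as its unique global maximiser, so $\mu$ bounded away from $\mu^*$ contributes an even larger gap. Since $|\cM_n|$ is polynomial in $n$, the aggregate contribution of $\cM_n \setminus \cM_n^*$ is at most $\exp(-n^{\Omega(1)})$ times that of $\mu^*$ alone, establishing the first equality in \eqref{eqLemma_mean}. The subtle point is that the truncation radius $n^{-0.49}$ is just below $n^{-1/2}$, so the quadratic gap $n\cdot n^{-0.98}=n^{0.02}$ beats the $O(\log n)$ Stirling error; the second equality then follows by plugging $\mu^*$ into the leading-order expression and absorbing the polynomial prefactors.
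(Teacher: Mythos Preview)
Your approach matches the paper's: decompose $\ex[Z_{\G,\b}]$ by the edge-type distribution $\mu$, read off the exponent $\varphi_{d,\b}(\mu)=\Psi(\mu)/n$ via \Cor~\ref{Cor_mu} and Stirling, show it is strictly concave with unique maximiser $\mu^*$, and then localise to $\cM_n^*$ by a quadratic (Laplace) estimate. The parametrisation you use ($\rho_1,\mu_{1,-1}$ versus the paper's $\mu_{11},\mu_{-1,-1}$) and the way you certify the maximum (Lagrange/Hessian versus explicit partial derivatives plus global concavity) are cosmetic differences.

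One point is underspecified. Your last sentence, ``plugging $\mu^*$ into the leading-order expression and absorbing the polynomial prefactors'', does not yet yield the sharp $\Theta(\cdot)$: \Cor~\ref{Cor_mu} carries an $O(\log n)$ additive error, so each summand is only known up to a polynomial factor, and a single term at $\mu^*$ is in fact $\Theta(n^{-1})$ times the target. The paper closes this gap by returning to the \emph{exact} expression of \Lem~\ref{Lemma_mu} (not the approximation of \Cor~\ref{Cor_mu}) and applying Stirling with constants to obtain a uniform asymptotic of the form $|\cG(\mu)|\exp(-\tfrac{\b dn}{2}(\mu_{11}+\mu_{-1-1}))=\Theta(n^{-1})\cdot(dn-1)!!\cdot\exp(n\varphi_{d,\b}(\mu))$ for all $\mu\in\cM_n^*$; the strictly concave profile summed over the $\Theta(n)$ lattice points of $\cM_n^*$ then produces the compensating factor of $n$ via the standard Laplace/Gaussian sum. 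You should make this step explicit rather than hiding it behind ``absorbing polynomial prefactors''.
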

\begin{proof}
	For a given $\mu\in\cM_n$ the total number of $\sigma\in\{\pm1\}^{V_n}$ with $\mu\in\cM(\sigma)$ equals $\binom n{\rho_1(\mu)n}$.
	Therefore, \Cor~\ref{Cor_mu} and \eqref{eqStirling} yield
	\begin{align}\label{eqLemma_mean0}
		\ex[Z_{\G,\b}]&=\sum_{\mu\in\cM_n}|\cG(\mu)|\exp\bc{-\frac{dn}{2}\bc{\mu_{11}+\mu_{-1-1}}}\nonumber\\
					&=\sum_{\mu\in\cM_n}\binom{n}{\rho_1(\sigma)n}\exp\bc{-\frac{dn}{2}\brk{\KL{\mu}{\rho(\mu)\tensor\rho(\mu)}+\b\bc{\mu_{11}+\mu_{-1-1}}}+O(\log n)}\\
					  &=\max_{\mu\in\cM_n}\exp\bc{n\brk{H(\rho(\mu))-\frac{d}{2}\KL{\mu}{\rho(\mu)\tensor\rho(\mu)}-\frac{d\b}{2}\bc{\mu_{11}+\mu_{-1-1}}}+O(\log n)}.
		\label{eqLemma_mean1}
	\end{align}
	Due to the linear relations \eqref{eqmuconstr} we can view the expression inside the square brackets, i.e.,
	\begin{align}\label{eqvarphi}
		\varphi_{d,\b}(\mu)&=H(\rho(\mu))-\frac{d}{2}\KL{\mu}{\rho(\mu)\tensor\rho(\mu)}-\frac{d\b}2\bc{\mu_{11}+\mu_{-1-1}},
	\end{align}
	 as a function of the two variables $\mu_{11}$ and $\mu_{-1-1}$.
	 The function is strictly concave because the entropy function is strictly concave and the Kullback-Leibler divergence is convex.
	 Hence, the unique stationary point of $\varphi_{d,\b}$ is its maximiser.
	 Since the derivatives of $\varphi_{d,\b}$ work out to be
	 \begin{align*}
		 \frac{\partial\varphi_{d,\b}}{\partial\mu_{11}}&=\frac{d-1}{2}\log\frac{\rho_1(\mu)}{\rho_{-1}(\mu)}+\frac{d}{2}\log\frac{\mu_{1-1}}{\mu_{11}}-\frac{d\b}{2},&
		 \frac{\partial\varphi_{d,\b}}{\partial\mu_{-1-1}}&=\frac{1-d}{2}\log\frac{\rho_1(\mu)}{\rho_{-1}(\mu)}+\frac{d}{2}\log\frac{\mu_{1-1}}{\mu_{-1-1}}-\frac{d\b}{2},
	 \end{align*}
	 the stationary point occurs at $\mu^*$.
	 Substituting the solution \eqref{eqvarphimax} into \eqref{eqLemma_mean1}, we obtain
	 \begin{align}\label{eqLemma_mean2}
		 \ex[Z_{\G,\b}]&=\exp\bc{n\brk{\log 2+\frac{d}{2}\log\frac{1+\eul^{-\b}}{2}}+O(\log n)}.
	 \end{align}
	 as well as the first equality sign in~\eqref{eqLemma_mean}.
	 To obtain the second part of~\eqref{eqLemma_mean} we take another look at \Lem~\ref{Lemma_mu}, which shows together with Stirling's formula that there exists $c=c(d,\b)$ such that
	 \begin{align}\label{eqLemma_mean7}
		 |\cG(\mu)|\frac{\exp\bc{-\frac{\b dn}{2}(\mu_{11}+\mu_{-1-1})}}{(dn-1)!!}&=\frac{c}{n}\exp\bc{-n\varphi_{d,\b}(\mu)}\qquad\mbox{uniformly for all }\mu\in\cM_n^*.
	 \end{align}
	 Since the function $\varphi_{d,\b}$ is strictly concave, \eqref{eqLemma_mean7} shows together with the first part of \eqref{eqLemma_mean} and the Laplace method that
	 \begin{align*}
		 \ex[Z_{\G,\b}]&=\Theta(\exp(-n\varphi_{d,\b}(\mu^*)))=\Theta\bc{2^n\bcfr{1+\eul^{-\b}}{2}^{dn/2}},
	 \end{align*}
	 which completes the proof.
\end{proof}

Having calculated $\ex[Z_{\G,\b}]$ sufficiently accurately, we proceed to extend this formula to the simple random graph $\GG$ and to the truncated first moment $\ex[Z_{\GG,\b}\vecone\cbc{\cO}]$.
Fortunately we can kill these two birds with one stone.

\subsection{The truncated first moment}\label{sec_trunc1}
We need to calculate truncated first moments of the form $\ex[Z_{\G,\b}\vecone\cA]$ for some event $\cA$.
To this end we define a pairing model variant of the stochastic block model.
In analogy to \eqref{eqsbm} we draw $\vec\sigma^*\in\{\pm1\}^{V_n}$ uniformly at random.
Further, given $\vec\sigma^*$ for any possible outcome $G$ of $\G$ we let
\begin{align}\label{eqG*}
	\pr\brk{\G^*=G\mid\vec\sigma^*}&\propto\exp(-\b\cH_G(\vec\sigma^*)).
\end{align}
The following lemma will enable us to reduce the task of computing $\ex[Z_{\G,\b}\vecone\cA]$ for an event $\cA$ to estimating the probability of $\G^*\in\cA$.
Similar lemmas have been known for other random problems since the work of Achlioptas and Coja-Oghlan~\cite{Barriers}.

\begin{lemma}\label{Lemma_null}
	Let $d\geq3,\b>0$ and let $\cE$ be a set of graph/spin configuration pairs. Then
	$$\frac{1}{\ex[Z_{\G,\b}]}\sum_{\sigma\in\{\pm1\}^{V_n}}\ex[\psi_{\G,\b}(\sigma)\vecone\{(\G,\sigma)\in\cE\}]=\Theta(\pr[(\G^*,\vec\sigma^*)\in\cE])+o(1).$$
\end{lemma}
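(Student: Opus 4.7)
The plan is to compare the joint distribution underlying the left-hand side of the lemma with that of $(\G^*,\vec\sigma^*)$. From~\eqref{eqG*} we have
\begin{align*}
\pr\brk{\G^*=G,\vec\sigma^*=\sigma}=\frac1{2^n}\cdot\frac{\pr\brk{\G=G}\psi_{G,\b}(\sigma)}{\ex\brk{\psi_{\G,\b}(\sigma)}},
\end{align*}
and summing the indicator of $\cE$ yields
\begin{align*}
\pr\brk{(\G^*,\vec\sigma^*)\in\cE}=\frac1{2^n}\sum_{\sigma\in\{\pm1\}^{V_n}}\frac{\ex\brk{\psi_{\G,\b}(\sigma)\vecone\cbc{(\G,\sigma)\in\cE}}}{\ex\brk{\psi_{\G,\b}(\sigma)}}.
\end{align*}
Compared with the left-hand side of the lemma, the two expressions share the common integrand $\ex[\psi_{\G,\b}(\sigma)\vecone\{(\G,\sigma)\in\cE\}]$ but weight it differently by the factor $2^{-n}\ex[Z_{\G,\b}]/\ex[\psi_{\G,\b}(\sigma)]$, which by the symmetry of the pairing model depends on $\sigma$ only through the magnetisation $\rho(\sigma)$.

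The next step is to estimate $\ex[\psi_{\G,\b}(\sigma)]$ by rerunning the first-moment calculation from the proof of Lemma~\ref{Lemma_mean} at fixed $\sigma$ rather than summed. Starting from
\begin{align*}
\ex\brk{\psi_{\G,\b}(\sigma)}=\sum_{\mu\in\cM(\sigma)}\pr\brk{\G\in\cG(\sigma,\mu)}\exp\bc{-\b dn(\mu_{11}+\mu_{-1-1})/2},
\end{align*}
Corollary~\ref{Cor_mu} combined with the strict concavity of $\varphi_{d,\b}$ yields, on a balanced window $\cT_n=\cbc{\sigma:|\rho_1(\sigma)-1/2|\leq K/\sqrt n}$ of fixed radius $K>0$, the estimate $\ex[\psi_{\G,\b}(\sigma)]=\Theta(((1+\eul^{-\b})/2)^{dn/2})$ uniformly in $\sigma\in\cT_n$ with implied constants depending on $K$ alone; moreover the same quantity never exceeds this order of magnitude for any $\sigma$. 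Combined with Lemma~\ref{Lemma_mean}'s estimate $\ex[Z_{\G,\b}]=\Theta(2^n((1+\eul^{-\b})/2)^{dn/2})$, this delivers the uniform pointwise upper bound $\ex[\psi_{\G,\b}(\sigma)]\leq C\cdot 2^{-n}\ex[Z_{\G,\b}]$ for every $\sigma$, and a matching lower bound $\ex[\psi_{\G,\b}(\sigma)]\geq c\cdot 2^{-n}\ex[Z_{\G,\b}]$ on $\cT_n$.

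The uniform upper bound immediately gives the upper half of the assertion, namely $\sum_\sigma\ex[\psi_{\G,\b}(\sigma)\vecone\cbc{(\G,\sigma)\in\cE}]/\ex[Z_{\G,\b}]\leq C\cdot\pr[(\G^*,\vec\sigma^*)\in\cE]$. For the reverse direction I would decompose the sum over $\sigma$ into contributions from $\sigma\in\cT_n$ and $\sigma\not\in\cT_n$. On $\cT_n$ the weight $2^{-n}\ex[Z_{\G,\b}]/\ex[\psi_{\G,\b}(\sigma)]$ is bounded above and below by positive constants, so the restrictions of the two sides of the lemma to $\cT_n$ agree up to a multiplicative $\Theta(1)$. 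For the tail $\sigma\not\in\cT_n$, Hoeffding's inequality applied to the uniform random $\vec\sigma^*$ bounds $\pr[\vec\sigma^*\notin\cT_n]$ by $\epsilon(K)$, which can be made arbitrarily small by enlarging $K$, while an analogous Laplace/concavity estimate yields $\sum_{\sigma\notin\cT_n}\ex[\psi_{\G,\b}(\sigma)]/\ex[Z_{\G,\b}]\leq\epsilon(K)$. Absorbing both tail contributions into the $o(1)$ error term, one obtains $A=\Theta(B)+o(1)$.

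The chief technical obstacle is to verify that no spurious polynomial factor in $n$ survives the Laplace estimate of $\ex[\psi_{\G,\b}(\sigma)]$, so that its ratio to $2^{-n}\ex[Z_{\G,\b}]$ is truly $\Theta(1)$ on the balanced window rather than slowly varying. The required cancellation stems from the fact that the two-dimensional Laplace integration over $\mu\in\cM(\sigma)$ contributes a $\Theta(1)$ prefactor, and the subsequent one-dimensional Gaussian integration over the magnetisation $\rho_1$ in the identity $\ex[Z_{\G,\b}]=\sum_\sigma\ex[\psi_{\G,\b}(\sigma)]$ produces a $\Theta(\sqrt n)$ factor which is exactly matched by the $\Theta(1/\sqrt n)$ arising from Stirling's formula for $\binom{n}{\rho_1 n}$; both of these facts can be extracted directly from the Laplace-method step in the proof of Lemma~\ref{Lemma_mean}.
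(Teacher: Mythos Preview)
Your proposal is correct and follows essentially the same route as the paper: both arguments hinge on the identity \eqref{eqLemma_null1} (which you write in the equivalent form $\pr[(\G^*,\vec\sigma^*)\in\cE]=2^{-n}\sum_\sigma\ex[\psi_{\G,\b}(\sigma)\vecone\{(\G,\sigma)\in\cE\}]/\ex[\psi_{\G,\b}(\sigma)]$), on the uniform bounds $\ex[\psi_{\G,\b}(\sigma)]=O(2^{-n}\ex[Z_{\G,\b}])$ everywhere and $\Theta(2^{-n}\ex[Z_{\G,\b}])$ on a balanced window, and on splitting $\sigma$ into balanced and unbalanced parts with the tails absorbed into the $o(1)$. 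The only cosmetic difference is that the paper uses a three-way split $S\cup S'\cup S''$ (to separate the Laplace-window estimate from the crude exponential bound far from balance), whereas you use a single two-way split; your version is adequate because the strict concavity of $\varphi_{d,\b}$ already delivers both tail estimates at once.
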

\begin{proof}
	The definition \eqref{eqG*} of $\G^*$ ensures that
\begin{align}\label{eqLemma_null1}
	\sum_{\sigma\in\{\pm1\}^{V_n}}\ex[\psi_{\G,\b}(\sigma)\vecone\{(\G,\sigma)\in\cE\}]&=
	\sum_{\sigma\in\{\pm1\}^{V_n}}{\pr[(\G^*,\vec\sigma^*)\in\cE\mid\vec\sigma^*=\sigma]}{\ex[\psi_{\G,\b}(\sigma)]}.
\end{align}
We now split the above sums up into three parts: for a small $\eps>0$ pick $C>0$ large and let
\begin{align*}
	S&=\cbc{\sigma\in\{\pm1\}^{V_n}:|\rho_1(\sigma)-1/2|\leq Cn^{-1/2}},&
	S'&=\cbc{\sigma\in\{\pm1\}^{V_n}\setminus S:|\rho_1(\sigma)-1/2|\leq n^{-0.49}},&
	S''&=\{\pm1\}^{V_n}\setminus(S\cup S').
\end{align*}
Then \Lem~\ref{Lemma_mean} implies that
\begin{align}\label{eqLemma_null2}
	\sum_{\sigma\in S''}\ex[\psi_{\G,\b}(\sigma)]&=o(\Erw[Z_{\G,\b}]).
\end{align}
In fact, \eqref{eqLemma_mean7} implies that for large enough $C$,
\begin{align}\label{eqLemma_null5}
	\sum_{\sigma\in S'}\ex[\psi_{\G,\b}(\sigma)]&\leq\eps\Erw[Z_{\G,\b}].
\end{align}
In addition, \eqref{eqLemma_mean7} implies together with the fact that $\mu^*$ is the unique stationary point of the concave function $\varphi_{d,\b}$ that
\begin{align}\label{eqLemma_null4}
	\ex[\psi_{\G,\b}(\sigma)]&=O(2^{-n}\ex[Z_{\G,\b}])\qquad\mbox{uniformly for all }\sigma\in\{\pm1\}^{V_n},\\
	\ex[\psi_{\G,\b}(\sigma)]&=\Theta(2^{-n}\ex[Z_{\G,\b}])\qquad\mbox{uniformly for all }\sigma\in S.\label{eqLemma_null6}
\end{align}
Furthermore, because $\vec\sigma^*$ is uniformly random we can choose $C$ so large that
\begin{align}\label{eqLemma_null3}
	\sum_{\sigma\in\{\pm1\}^{V_n}}{\pr[(\G^*,\vec\sigma^*)\in\cE\mid\vec\sigma^*=\sigma]}\leq\eps+\sum_{\sigma\in S}{\pr[(\G^*,\vec\sigma^*)\in\cE\mid\vec\sigma^*=\sigma]}.
\end{align}
Combining \eqref{eqLemma_null1}--\eqref{eqLemma_null3} and taking $\eps\to0$ slowly, we obtain the assertion.
\end{proof}

\noindent
As an immediate consequence of \Lem~\ref{Lemma_null} we obtain the following.

\begin{corollary}\label{Cor_null}
	For all $d\geq3,\b>0$ and for any event $\cA$ the following two statements are true.
	\begin{enumerate}[(i)]
		\item If $\pr\brk{\G^*\in\cA}=\Omega(1)$, then $\ex[Z_{\G,\b}\vecone\cA]=\Theta(\ex[Z_{\G,\b}])$.
		\item We have $\pr\brk{\G^*\in\cA}=1-o(1)$ iff $\ex[Z_{\G,\b}\vecone\cA]\sim\ex[Z_{\G,\b}]$.
	\end{enumerate}
\end{corollary}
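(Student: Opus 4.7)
The corollary is essentially a repackaging of \Lem~\ref{Lemma_null}. My plan is to apply that lemma to the event
\[
\cE_\cA = \{(G,\sigma)\in\cG\times\{\pm1\}^{V_n}:G\in\cA\},
\]
which depends only on the graph coordinate. Because the indicator $\vecone\{(\G,\sigma)\in\cE_\cA\}=\vecone\{\G\in\cA\}$ is $\sigma$-free, the inner sum in \Lem~\ref{Lemma_null} collapses to
\[
\sum_{\sigma\in\{\pm1\}^{V_n}}\ex[\psi_{\G,\b}(\sigma)\vecone\{(\G,\sigma)\in\cE_\cA\}]=\ex\brk{\vecone\{\G\in\cA\}\sum_\sigma\psi_{\G,\b}(\sigma)}=\ex[Z_{\G,\b}\vecone\cA],
\]
while $\pr[(\G^*,\vec\sigma^*)\in\cE_\cA]=\pr[\G^*\in\cA]$. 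Thus \Lem~\ref{Lemma_null} specialises to
\begin{align}\label{eqCorNullKey}
\frac{\ex[Z_{\G,\b}\vecone\cA]}{\ex[Z_{\G,\b}]}=\Theta(\pr[\G^*\in\cA])+o(1).
\end{align}

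For part (i), assume $\pr[\G^*\in\cA]=\Omega(1)$. The $\Theta$ in \eqref{eqCorNullKey} hides two-sided positive constants, so the right-hand side is bounded below by a positive constant for large $n$; it is trivially bounded above by $1$. Hence $\ex[Z_{\G,\b}\vecone\cA]=\Theta(\ex[Z_{\G,\b}])$, as claimed.

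For part (ii) I would argue via the complement $\cA^c$, which also depends only on the graph. Applying \eqref{eqCorNullKey} to $\cA^c$ yields
\[
\frac{\ex[Z_{\G,\b}\vecone\cA^c]}{\ex[Z_{\G,\b}]}=\Theta(\pr[\G^*\in\cA^c])+o(1).
\]
If $\pr[\G^*\in\cA]=1-o(1)$, then $\pr[\G^*\in\cA^c]=o(1)$, so $\ex[Z_{\G,\b}\vecone\cA^c]=o(\ex[Z_{\G,\b}])$, which combined with $\ex[Z_{\G,\b}\vecone\cA]+\ex[Z_{\G,\b}\vecone\cA^c]=\ex[Z_{\G,\b}]$ gives $\ex[Z_{\G,\b}\vecone\cA]\sim\ex[Z_{\G,\b}]$. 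Conversely, if $\ex[Z_{\G,\b}\vecone\cA]\sim\ex[Z_{\G,\b}]$, then $\ex[Z_{\G,\b}\vecone\cA^c]=o(\ex[Z_{\G,\b}])$, and the displayed identity forces $\Theta(\pr[\G^*\in\cA^c])=o(1)$, hence $\pr[\G^*\in\cA]\to 1$.

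Since all the substantive work is already carried out in \Lem~\ref{Lemma_null} (the splitting of $\{\pm1\}^{V_n}$ into the near-balanced window $S$, the buffer $S'$, and the tail $S''$, together with the uniform bounds \eqref{eqLemma_null4}--\eqref{eqLemma_null6}), there is no real obstacle here; the only subtlety is being careful that the $\Theta$ in \eqref{eqCorNullKey} has implicit constants independent of $\cA$, which is what allows one to deduce $\pr[\G^*\in\cA^c]=o(1)$ from $\ex[Z_{\G,\b}\vecone\cA^c]=o(\ex[Z_{\G,\b}])$ in the reverse direction of (ii).
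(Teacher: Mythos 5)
Your proposal is correct and is essentially the paper's own argument: the paper simply declares \Cor~\ref{Cor_null} an immediate consequence of \Lem~\ref{Lemma_null}, and your specialisation to a $\sigma$-free event (so the sum collapses to $\ex[Z_{\G,\b}\vecone\cA]$), together with applying the resulting relation to $\cA$ and to its complement, is exactly the intended deduction. Your remark that the implicit constants and the $o(1)$ term in \Lem~\ref{Lemma_null} are uniform in the event is the right point to check, and it holds since they depend only on $d,\b$.
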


As an application of \Cor~\ref{Cor_null} we will compute $\ex[Z_{\GG,\b}]=\ex[Z_{\G,\b}\vecone\cS]$.
To this end we need bound the probability of the event $\G^*\in\cS$ away from zero.

\begin{lemma}\label{Lemma_simple*}
	For all $d\geq3,\b>0$ we have $\pr\brk{\G^*\in\cS}=\Omega(1)$.
\end{lemma}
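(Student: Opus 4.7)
The plan is to reduce the lemma, via \Lem~\ref{Lemma_null}, to a lower bound on the truncated first moment $\ex[Z_{\G,\b}\vecone\{\cS\}]$. Applying \Lem~\ref{Lemma_null} with the event $\cE = \cbc{(G,\sigma) : G \in \cS}$, which depends only on $G$, yields
\begin{align*}
	\frac{\ex[Z_{\G,\b}\vecone\{\cS\}]}{\ex[Z_{\G,\b}]} = \Theta\bc{\pr[\G^* \in \cS]} + o(1),
\end{align*}
so it is enough to show that $\ex[Z_{\G,\b}\vecone\{\cS\}] = \Omega(\ex[Z_{\G,\b}])$.

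To establish this lower bound I would exploit the standard pairing-model identity~\eqref{eqsimple}, which asserts that $\GG$ is distributed as $\G$ conditioned on $\cS$. Hence
\begin{align*}
	\ex[Z_{\G,\b}\vecone\{\cS\}] = \pr[\G \in \cS] \cdot \ex[Z_{\GG,\b}].
\end{align*}
The classical configuration-model estimate $\pr[\G \in \cS] = \Omega(1)$ (with limit $\eul^{-(d^2-1)/4}$) handles the first factor, while combining \Lem~\ref{Cor_simple*} with \Lem~\ref{Lemma_mean} shows that $\ex[Z_{\GG,\b}] = \Theta(\ex[Z_{\G,\b}])$; that is, the contribution of non-simple multigraphs to the mean partition function is only a constant factor. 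Stitching these three pieces together delivers the required lower bound.

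I do not anticipate a serious obstacle: the proof is essentially a bookkeeping exercise gluing together the first-moment asymptotics for $\G$ and $\GG$ with \Lem~\ref{Lemma_null}. The only subtlety worth checking is that the $\Theta$-constant appearing in \Lem~\ref{Lemma_null} is genuinely bounded away from $0$, but this is immediate from the proof of that lemma, where the constant arises only from the behaviour of $\ex[\psi_{\G,\b}(\sigma)]$ on the dominating nearly-balanced magnetisation slab $S$ identified in \eqref{eqLemma_null6}.
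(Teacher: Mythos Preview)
Your reduction via \Lem~\ref{Lemma_null} is perfectly sound, but the argument is circular as written. You invoke \Lem~\ref{Cor_simple*} to conclude that $\ex[Z_{\GG,\b}]=\Theta(\ex[Z_{\G,\b}])$; however, in the paper's logical structure the proof of \Lem~\ref{Cor_simple*} reads verbatim ``follows from \Lem~\ref{Lemma_mean}, \Cor~\ref{Cor_null} and \Lem~\ref{Lemma_simple*}''. In other words, the only way the paper knows that conditioning on simplicity does not shrink the first moment is precisely through the very statement you are trying to prove. Unwinding your chain, the non-trivial step $\ex[Z_{\G,\b}\vecone\{\cS\}]=\Omega(\ex[Z_{\G,\b}])$ is, by \Cor~\ref{Cor_null}(i), \emph{equivalent} to $\pr[\G^*\in\cS]=\Omega(1)$; you have rephrased the lemma rather than proved it.

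To break the circle you would need an independent computation of $\ex[Z_{\GG,\b}]$ up to constants, i.e.\ a direct first-moment calculation on simple $d$-regular graphs rather than on the pairing model. This is doable (for instance via asymptotic enumeration of simple regular graphs with a prescribed cut profile) but it is at least as much work as the paper's own proof, which instead conditions on a typical $\mu\in\cM_n^*$ and runs the method of moments on the loop and double-edge counts of $\G^*$ to show they are asymptotically Poisson with bounded means, giving $\pr[\G^*\in\cS]\sim\exp(-\kappa-\lambda)$ directly. That direct Poisson computation is what is genuinely missing from your plan.
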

\begin{proof}
	Following the well known proof that $\pr[\G\in\cS]=\Omega(1)$, we will use the method of moments.
	Thus, fix $\mu\in\cM_n^*$ and $\sigma\in\{\pm1\}^{V_n}$ with $\rho_1(\sigma)=\rho_1(\mu)$.
	Let $X$ be the number of self-loops of $\G$ and let $Y$ be the number of double-edges.
	We will show that for any fixed integers $k,\ell\geq1$,
	\begin{align}\label{eqLemma_simple*1}
		\ex\brk{\prod_{j=1}^k (X-j+1)\prod_{j=1}^\ell (Y-j+1)}&\sim\kappa^k\lambda^\ell\qquad\mbox{with }\kappa=\frac{d-1}{\eul^{\b}+1},\ \lambda=\frac{(d-1)^2(1+\eul^{2\b})}{2(1+\eul^{\b})^2}.
	\end{align}
	Clearly \eqref{eqLemma_simple*1} implies that $\pr[\G^*\in\cS]=\pr[X=Y=0]\sim\exp(-\kappa-\lambda)=\Omega(1)$.

To verify \eqref{eqLemma_simple*1} we start by computing the means of $X,Y$.
To be more precise, let $X_1$ be the number of self-loops at vertices $v_i$ with $\sigma_{v_i}=1$.
In order to construct a self-loop we need to pick a vertex and two of its clones and calculate the probability that these clones get matched.
Thus, the number of choices equals $\binom d2\rho_1(\sigma)n$.
Therefore, \Lem~\ref{Lemma_mu} and \eqref{eqvarphimax} yield
\begin{align}\label{eqLemma_simple*5}
	\ex[X_1\mid\cG(\sigma,\mu)]&=\frac{\binom d2\rho_1(\sigma)n\binom{dn\rho_1(\sigma)-2}{dn\mu_{11}-2}\binom{dn\rho_{-1}(\sigma)}{dn\mu_{-1-1}}(dn\mu_{11}-3)!!(dn\mu_{-1-1}-1)!!(dn\mu_{1-1})!}{\binom{dn\rho_1(\sigma)}{dn\mu_{11}}\binom{dn\rho_{-1}(\sigma)}{dn\mu_{-1-1}}(dn\mu_{11}-1)!!(dn\mu_{-1-1}-1)!!(dn\mu_{1-1})!}\sim\frac{\kappa}{2}.
	\end{align}
	Because \eqref{eqvarphimax} ensures that $\rho_1(\sigma)\sim1/2$, \eqref{eqLemma_simple*5} implies that
\begin{align}\label{eqLemma_simple*6}
	\ex[X\mid\cG(\sigma,\mu)]&\sim\kappa.
	\end{align}

	Similar considerations yield the mean of $Y$.
	Specifically, we decompose $Y$ into $Y_{11}$, $Y_{-1-1}$ and $Y_{1-1}$, which, respectively, count double-edges among vertices assigned spin $1$, among vertices with spin $-1$, and between vertices with different spins.
	To work out $Y_{11}$ we need to select two vertices with spin $1$, two clones of each and a perfect matching.
	Thus, the number of choices comes to $2\binom{\rho_1(\sigma)n}2\binom{d}2^2$.
Hence, \Lem~\ref{Lemma_mu} and \eqref{eqvarphimax} yield
\begin{align}
	\ex[Y_{11}\mid\cG(\sigma,\mu)]&=\frac{2\binom{\rho_1(\sigma)n}2\binom{d}2^2\binom{dn\rho_1(\sigma)-4}{dn\mu_{11}-4}\binom{dn\rho_{-1}(\sigma)}{dn\mu_{-1-1}}(dn\mu_{11}-5)!!(dn\mu_{-1-1}-1)!!(dn\mu_{1-1})!}{\binom{dn\rho_1(\sigma)}{dn\mu_{11}}\binom{dn\rho_{-1}(\sigma)}{dn\mu_{-1-1}}(dn\mu_{11}-1)!!(dn\mu_{-1-1}-1)!!(dn\mu_{1-1})!}\nonumber\\
								  &	\sim \frac{(d-1)^2\mu_{11}^2}{4\rho_1(\sigma)^2}\sim\frac{(d-1)^2}{4(\eul^{\b}+1)^2}.
			\label{eqLemma_simple*7}
	\end{align}
	The same calculation applies to the mean of $Y_{-1-1}$.
	Moreover, analogously we obtain
\begin{align}
	\ex[Y_{1-1}\mid\cG(\sigma,\mu)]&=\frac{2\rho_1(\sigma)\rho_{-1}(\sigma)n^2\binom{d}2^2\binom{dn\rho_1(\sigma)-2}{dn\mu_{11}}\binom{dn\rho_{-1}(\sigma)-2}{dn\mu_{-1-1}}(dn\mu_{11}-1)!!(dn\mu_{-1-1}-1)!!(dn\mu_{1-1}-2)!}{\binom{dn\rho_1(\sigma)}{dn\mu_{11}}\binom{dn\rho_{-1}(\sigma)}{dn\mu_{-1-1}}(dn\mu_{11}-1)!!(dn\mu_{-1-1}-1)!!(dn\mu_{1-1})!}\nonumber\\
								   &\sim\frac{(d-1)^2\mu_{1-1}^2}{2\rho_1\rho_{-1}}=\frac{(d-1)^2\eul^{2\b}}{2(1+\eul^{\b})^2}.
								   \label{eqLemma_simple*8}
	\end{align}
	Combining \eqref{eqLemma_simple*7} and  \eqref{eqLemma_simple*8}, we obtain
	\begin{align}\label{eqLemma_simple*9}
			\ex[Y\mid\cG(\sigma,\mu)]&\sim\lambda.
				\end{align}
	
				The calculations that we performed towards \eqref{eqLemma_simple*6} and \eqref{eqLemma_simple*9} easily extend to a proof of \eqref{eqLemma_simple*1}.
				Indeed, instead of just accounting for the choice of placing a single double-edge or loop, we need to place fixed numbers $k,\ell$.
				Since $k,\ell$ remain bounded as $n\to\infty$, the probability that any choices overlap is $O(1/n)$.
				Therefore, the joint factorial moment of $X,Y$ works out to be $\kappa^k\lambda^\ell$, which is  \eqref{eqLemma_simple*1}.
\end{proof}

\begin{proof}[Proof of \Lem~\ref{Cor_simple*}]
The lemma follows from \Lem~\ref{Lemma_mean}, \Cor~\ref{Cor_null} and \Lem~\ref{Lemma_simple*}.
\end{proof}

\begin{proof}[Proof of \Lem~\ref{Lemma_hunch}]
This is an immediate consequence of \Cor~\ref{Cor_null} and \Lem~\ref{Lemma_simple*}.
\end{proof}

\subsection{Coupling with the broadcasting process}\label{Sec_couple}
In this section we are going to establish a coupling of the local structure of $\G^*$ around a given vertex $v_i$ with the broadcasting process from \Lem~\ref{lem_reconstruction}.
Specifically, we are going to prove the following statement.

\begin{lemma}\label{Cor_O}
For any $d\geq3,\b>0$ there exists $\eps_n=o(1)$ such that the event $\cO$ from \eqref{eqOdef} satisfies $ \ex[Z_{\G,\b}\vecone\cbc{\cO}]\sim\ex[Z_{\G,\b}].  $
\end{lemma}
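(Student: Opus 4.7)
The plan is to apply Corollary~\ref{Cor_null}(ii), which reduces the desired identity $\ex[Z_{\G,\b}\vecone\{\cO\}] \sim \ex[Z_{\G,\b}]$ to the statement $\pr[\G^* \in \cO] = 1-o(1)$ for some $\eps_n = o(1)$. By Markov's inequality applied to the quenched quantity $\ex[|\vec\sigma \cdot \vec\sigma'| \mid \G^*]$, it suffices to prove $\ex|\vec\sigma \cdot \vec\sigma'| = o(n)$ under the joint law of $\G^*$ and two independent Boltzmann samples $\vec\sigma, \vec\sigma' \sim \mu_{\G^*,\b}$; $\eps_n$ can then be chosen as any sequence dominating this bound. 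A Nishimori-style identity converts this to $\ex|\vec\sigma^* \cdot \vec\sigma| = o(n)$: conditional on $\G^*$, the planted configuration $\vec\sigma^*$ is distributed, up to an $o(1)$ total variation error coming from the restriction to approximately balanced configurations (handled exactly as in the $S, S', S''$ splitting in the proof of \Lem~\ref{Lemma_null}), as a sample from $\mu_{\G^*,\b}$. Hence the joint law of $(\vec\sigma^*, \vec\sigma)$ agrees with that of two independent Boltzmann samples up to this error.

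The core ingredient is a local coupling between $(\G^*, \vec\sigma^*)$ restricted to the depth-$\ell$ neighbourhood $B_\ell(v)$ of a fixed vertex $v$ and the broadcasting process \eqref{eqbroadcast} on $\TT_{d-1}$. Because the pairing-model construction uses only $O_\ell(1)$ clones near $v$, $B_\ell(v)$ is a $d$-regular tree with probability $1 - O_\ell(1/n)$. On this event, the edge-factored tilt $\exp(-\b\cH_{\G^*}(\vec\sigma^*))$ reproduces exactly the transition probabilities \eqref{eqbroadcast} for $\vec\sigma^*$ along the tree. Drawing $\vec\sigma_v$ from $\mu_{\G^*,\b}$ conditional on $\vec\sigma$ outside $B_\ell(v)$ is, by the spatial Markov property of the Boltzmann measure, a single-site update at the root given the boundary spins at depth $\ell$; for $\b < \b^*(d)$ \Lem~\ref{lem_reconstruction} forces this law to converge to the uniform measure on $\{\pm 1\}$ as $\ell \to \infty$, regardless of $\vec\sigma^*_v$. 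Thus $\ex[\vec\sigma^*_v \vec\sigma_v] = o_\ell(1)$ uniformly over typical $v$.

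To upgrade this one-point decorrelation to $\ex|\vec\sigma^* \cdot \vec\sigma| = o(n)$, I would use Cauchy--Schwarz to bound
\[
(\ex|\vec\sigma^* \cdot \vec\sigma|)^2 \leq \ex|\vec\sigma^* \cdot \vec\sigma|^2 = \sum_{v,w} \ex\bigl[\vec\sigma^*_v \vec\sigma^*_w\, \ex[\vec\sigma_v \vec\sigma_w \mid \G^*]\bigr],
\]
and run the same local coupling simultaneously at two vertices $v, w$ at graph distance $\gg \ell$; for the $(1-o(1))$-fraction of such pairs whose $\ell$-balls are disjoint and separately tree-like, the covariance factorises into two independent one-vertex quantities, each vanishing by the argument above. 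Combined with the $O(n)$ diagonal contribution this gives $\ex|\vec\sigma^* \cdot \vec\sigma|^2 = o(n^2)$, and choosing $\ell = \ell(n) \to \infty$ slowly enough produces the required $\eps_n \to 0$. The main obstacle I anticipate is justifying the local broadcasting coupling rigorously: although the edge-factored Boltzmann tilt gives the marginal law of $\vec\sigma^*$ on a fixed tree exactly as in \eqref{eqbroadcast}, the planted tilt couples the graph structure with the spins globally, so one must verify that the depth-$\ell$ joint law of $(B_\ell(v), \vec\sigma^*|_{B_\ell(v)})$ agrees with the broadcasting process up to $o_\ell(1)$ total variation, and similarly that the Gibbs measure on $\G^*$ conditional on the boundary is close enough to the tree Gibbs measure for \Lem~\ref{lem_reconstruction} to apply.
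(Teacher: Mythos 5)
Your overall architecture is close to the paper's (reduce via \Cor~\ref{Cor_null} to a statement about the planted model, couple the planted local structure with the broadcast process, invoke non-reconstruction, and upgrade to small two-replica overlap by resampling at far-apart vertices), but the justification of the crucial two-point step has a genuine gap. Writing the inner factor as a resampled quantity, for $v,w$ with disjoint tree-like $\ell$-balls one gets
\begin{align*}
\ex\brk{\vec\sigma_v\vec\sigma_w\mid\G^*}
=\ex\brk{\bc{2\mu_{\G^*,\b,v,\ell}(1\mid\vec\sigma)-1}\bc{2\mu_{\G^*,\b,w,\ell}(1\mid\vec\sigma)-1}\,\Big|\,\G^*},
\end{align*}
where the boundary spins come from the Gibbs sample $\vec\sigma$ itself. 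This does \emph{not} ``factorise into two independent one-vertex quantities'': the two boundary conditions are correlated under $\mu_{\G^*,\b}$, and that correlation is exactly the long-range order in question (note also that the one-point quantity you quote, $\ex[\vec\sigma^*_v\vec\sigma_v]$, is identically $0$ by global spin-flip symmetry, so it carries no information). What is true is conditional independence of the two resampled root spins given the two boundaries, after which you must bound the product of the conditional biases, and for that you need $\ex\abs{\mu_{\G^*,\b,v,\ell}(1\mid\vec\sigma)-\tfrac12}=o(1)$ with the boundary drawn from a \emph{Gibbs sample} on $\G^*$. \Lem~\ref{lem_reconstruction} together with the local coupling only gives this when the boundary is the planted/broadcast configuration $\vec\sigma^*$ (this is \Cor~\ref{Cor_broadcast_coupling}); substituting the Gibbs boundary requires one more pass through the planted--null correspondence, which your write-up does not flag. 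The paper's proof is engineered precisely to avoid this: it transfers the one-configuration functional $X(G,\sigma)=\sum_i\vecone\{|\mu_{G,\b,v_i,\ell}(1|\sigma)-\frac12|>\eps'\}$ through \Lem~\ref{Lemma_null}, after which the boundary is a Gibbs sample by construction, and then proves the deterministic implication \eqref{eqCor_O2} for an arbitrary $d$-regular graph by resampling at one and two random vertices and applying Chebyshev.

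A secondary inaccuracy: the Nishimori step is not an ``$o(1)$ total variation'' statement. Conditional on $\G^*$, the law of $\vec\sigma^*$ is $\mu_{\G^*,\b}$ reweighted by a magnetisation-dependent factor $1/Z_\sigma$ that varies by $\Theta(1)$ across the central $O(\sqrt n)$ window, which is why \Lem~\ref{Lemma_null} and \Cor~\ref{Cor_null}(i) only assert $\Theta(\,\cdot\,)$ comparability. For transferring an $o(n)$ bound on a quantity bounded by $n$ this two-sided constant-factor comparison (plus the $S,S',S''$ tail control you cite) does suffice, but the argument has to be phrased in those terms rather than as TV closeness. So the ingredients for your route exist in the paper, but as written the factorisation claim and the silent swap of the broadcast boundary for the Gibbs boundary are gaps that need the repair just described.
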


We begin the proof of of \Lem~\ref{Cor_O} by showing that the bounded-depth neighbourhoods in $\G^*$ are typically acyclic.

\begin{lemma}\label{Lemma_acyclic}
Let $d\geq3$, $\b>0$.
Moreover, for an integer $\ell\geq1$ let $C_\ell$ be the number of cycles of length $\ell$ in $\G^*$.
Then for any fixed integer $L$ we have $\sum_{\ell\leq L}C_\ell=O(\log n)$ \whp
\end{lemma}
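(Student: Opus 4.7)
My plan is to reduce the statement to a first-moment bound, namely $\Erw[\sum_{\ell\leq L}C_\ell(\G^*)]=O(1)$, from which Markov's inequality immediately yields $\sum_{\ell\leq L}C_\ell(\G^*)\leq\log^2 n$ (say) w.h.p. Since $L$ is a fixed integer, it suffices to show $\ex[C_\ell(\G^*)]=O(1)$ for each $\ell\in\{1,\ldots,L\}$ (the lengths $\ell=1,2$ being already handled in \Lem~\ref{Lemma_simple*}).

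To compute $\Erw[C_\ell(\G^*)]$ I unfold the definition \eqref{eqG*} of the planted model:
\begin{align*}
\Erw[C_\ell(\G^*)]
&=\frac{\sum_\sigma\Erw[\psi_{\G,\b}(\sigma)\,C_\ell(\G)]}{\sum_\sigma\Erw[\psi_{\G,\b}(\sigma)]}
=\frac{\Erw[Z_{\G,\b}\,C_\ell(\G)]}{\Erw[Z_{\G,\b}]},
\end{align*}
so the task becomes $\Erw[Z_{\G,\b}\,C_\ell(\G)]=O(\Erw[Z_{\G,\b}])$. Swapping sums, $\Erw[Z_{\G,\b}\,C_\ell(\G)]=\sum_\gamma\Erw[Z_{\G,\b}\vecone\{\gamma\subseteq\G\}]$, where $\gamma$ ranges over potential placements of an $\ell$-cycle (choices of $\ell$ distinct vertices, one clone-pair per consecutive pair, each unoriented cycle counted $2\ell$ times). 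For each such $\gamma$, I would decompose the expectation on the right-hand side further over $(\sigma,\mu)\in\cG(\sigma,\mu)$ exactly as in \Lem~\ref{Lemma_mean} and \Lem~\ref{Lemma_simple*}: the bulk contribution comes from balanced $\sigma$ with $\mu\in\cM_n^*$, for which \eqref{eqLemma_mean7} provides a clean asymptotic formula.

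The core calculation then mimics the moment computations \eqref{eqLemma_simple*5}--\eqref{eqLemma_simple*9} in the proof of \Lem~\ref{Lemma_simple*}, but for an $\ell$-edge subgraph rather than a self-loop or a double edge. For each fixed spin-pattern $(\sigma_{v_{i_1}},\ldots,\sigma_{v_{i_\ell}})$ along the cycle, using \Lem~\ref{Lemma_mu} to condition on $\cG(\sigma,\mu)$, the number of available clone placements is $\Theta(n^\ell)$ while the probability that the designated clones get paired correctly is $\Theta(n^{-\ell})$; the Boltzmann weight of the $\ell$ cycle-edges contributes a bounded factor, since each edge contributes either $1$ or $\eb$, lying in $[\eb,1]$. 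Summing over the $2^\ell$ spin-patterns and over the $\Theta(n^\ell)$ vertex tuples therefore yields $\Erw[C_\ell(\G)\mid\cG(\sigma,\mu)]=O(1)$ uniformly over $\sigma\in S$ and $\mu\in\cM_n^*$, with the hidden constant depending on $d,\b,\ell$ only. Integrating this over $(\sigma,\mu)$ against the weight \eqref{eqLemma_mean7} and invoking \Lem~\ref{Lemma_mean} produces $\Erw[Z_{\G,\b}\,C_\ell(\G)]=O(\Erw[Z_{\G,\b}])$, as required.

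The main obstacle is essentially bookkeeping rather than a conceptual difficulty: one has to verify that the $O(1)$ bound on the conditional expectation is indeed uniform in $(\sigma,\mu)$ over the region $\cM_n^*$ contributing non-negligibly to $\Erw[Z_{\G,\b}]$, and that the outlier contribution from unbalanced $\sigma$ or $\mu\notin\cM_n^*$ is absorbed into the $O(1)$ bound via \eqref{eqLemma_null2}--\eqref{eqLemma_null5}. Since the planted reweighting only rescales individual edges by a factor in $[\eb,1]$, fixed-length cycles accrue a reweighting of at most $\eb^{-\ell}=O(1)$, so no genuine new estimate beyond the machinery already established in Sections~\ref{sec_firstmmt}--\ref{sec_trunc1} is needed.
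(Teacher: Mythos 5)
Your proposal is correct and follows essentially the same route as the paper: the paper likewise reduces to a first-moment bound on cycle counts in the planted model conditioned on typical $(\sigma,\mu)$ (justified via \Lem~\ref{Lemma_mean} and \Cor~\ref{Cor_null}), computes the conditional probability of a prescribed cycle through \Lem~\ref{Lemma_mu}-type ratios of binomials and double factorials, obtaining a $\Theta((dn)^{-\ell})$ per-placement probability with bounded Boltzmann tilting and hence $\ex[C_\ell\mid\cG(\mu),\vec\sigma^*]=O(1)$, and finishes with Markov's inequality. Two cosmetic points only: your identity $\ex[C_\ell(\G^*)]=\ex[Z_{\G,\b}C_\ell(\G)]/\ex[Z_{\G,\b}]$ holds only up to $\Theta(1)$ factors and negligible error terms, because the planted normalisation depends on the magnetisation of $\sigma$ (this is exactly what \Lem~\ref{Lemma_null} supplies, and an upper bound is all you need), and Markov should be applied at threshold $\log n$ rather than $\log^2 n$ to yield the stated $O(\log n)$ bound.
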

\begin{proof}
By \Lem~\ref{Lemma_mean} and \Cor~\ref{Cor_null} we may condition on the event $\cG(\mu)$ for some $\mu\in\cM_n^*$ and on the event $|\rho_1(\vec\sigma^*)|\sim 1/2$.
A cycle of length $\ell$ passes through (not necessarily distinct) vertices $\vec u=(u_1,\ldots,u_\ell)$.
For each step of the cycle we select a clone $i_t$ where the cycle enters and one $j_t\neq i_t$ where it leaves.
Set $\vec i=(\vec i_1,\ldots,\vec i_t)$ and $\vec j=(\vec j_1,\ldots,\vec j_t)$.
However, we overcounted by a factor of $2\ell$ (for the direction and the choice of the starting point).
Given these choices let $e_{11}$ be the number of edges of the cycle that connect two vertices of spin $1$ under $\vec\sigma^*$ and define $e_{-1-1}$ similarly.
Moreover, let $e_{1-1}$ be the number of cycle edges that join vertices with different spins.
Following \Lem~\ref{Lemma_mu} we estimate the probability of the event $\cC(\vec u,\vec i,\vec j)$ that the specified cycle actually appears in $\G^*$ by
\begin{align}
\pr\brk{\cC(\vec u,\vec i,\vec j)\mid\cG(\mu),\vec\sigma^*}&
\sim{\binom{dn\rho_1(\sigma)-2e_{11}-e_{1-1}}{dn\mu_{11}-2e_{11}}\binom{dn\rho_{-1}(\sigma)-2e_{-1-1}-e_{1-1}}{dn\mu_{-1-1}-2e_{-1-1}}}{\binom{dn\rho_1(\sigma)}{dn\mu_{11}}^{-1}\binom{dn\rho_{-1}(\sigma)}{dn\mu_{-1-1}}^{-1}}\nonumber\\
	&\qquad\cdot\frac{(dn\mu_{11}-2e_{11}-1)!!(dn\mu_{-1-1}-2e_{-1-1}-1)!!(dn\mu_{1-1}-e_{1-1})!}{(dn\mu_{11}-1)!!(dn\mu_{-1-1}-1)!!dn\mu_{1-1}!}\nonumber\\
&\sim (dn)^{-\ell}\bcfr{\mu_{11}}{\rho_1(\vec\sigma^*)^2}^{e_{11}}\bcfr{\mu_{-1-1}}{\rho_{-1}(\vec\sigma^*)^2}^{e_{-1-1}}\bcfr{\mu_{1-1}}{\rho_1(\vec\sigma^*)\rho_{-1}(\vec\sigma^*)}^{e_{1-1}}\nonumber\\
&\sim \bcfr{2}{dn(\eul^{\b}+1)}^\ell\eul^{\b e_{1-1}}\qquad[\mbox{due to \eqref{eqvarphimax}}].
\label{eqLemma_acyclic1}
\end{align}
Since the total number of choices for $\vec u,\vec i,\vec j$ is bounded by $n^\ell\binom d2^\ell$, \eqref{eqLemma_acyclic1} implies that $\ex[C_\ell\mid\cG(\mu),\vec\sigma^*]=O(1)$.
Therefore, the assertion follows from Markov's inequality.
\end{proof}

For a vertex $v$ of $\G^*$ and an integer $\ell\geq0$ let $\vec\sigma^*_{v,\ell}$ be the spin configuration that $\vec\sigma^*$ induces on the vertices at distance at most $\ell$ from $v$.
Furthermore, let $\vec\tau_\ell,\vec\tau_\ell'$ be two independent copies of the spin configuration that the broadcasting process from \Sec~\ref{Sec_bnr} induces on the vertices of the infinite $d$-regular tree $\TT_d$ at distance at most $\ell$ from its root.

\begin{lemma}\label{Lemma_broadcast_coupling}
For any $d\geq3,\b>0,\ell\geq0$ the spin configurations $\vec\sigma^*_{v_1,\ell}$ and $\vec\tau_\ell$ have total variation distance $o(1)$.
\end{lemma}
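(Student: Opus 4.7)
The plan is to build the $\ell$-neighbourhood of $v_1$ in $\G^*$ by a BFS exploration in the biased pairing model and to show that, step by step, the revealed spin configuration follows the broadcasting rule~\eqref{eqbroadcast} up to vanishing error. First reveal $\vec\sigma^*$; standard concentration gives $|\rho_1(\vec\sigma^*)-1/2|=O(n^{-1/2})$ whp, so the numbers $N_{+1},N_{-1}$ of $\pm 1$-clones satisfy $N_{+1}=N_{-1}=(dn/2)(1+o(1))$. Given $\vec\sigma^*$, the conditional law of the remaining matching after any partial exploration is itself a biased pairing model, and therefore is invariant under permutations of clones sharing a spin---this exchangeability drives the argument.

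Now expand $\G^*$ clone-by-clone from $v_1$. At each step we pick an unmatched clone $c$ attached to an already-revealed vertex $u$ with spin $s_u=\vec\sigma^*_u$ and reveal its partner. By exchangeability, the conditional probability that $c$ pairs to a clone of a specific unrevealed vertex $w$ of spin $s$ is proportional to $|\cbc{\mbox{unmatched clones of }w}|\cdot\eul^{-\b\vecone\{s=s_u\}}\cdot R(s)$, where $R(s)$ is a ratio of partition functions of the remaining biased pairing model and depends only on $s$. Only $O(d^\ell)=O(1)$ clones are touched during the entire exploration, so the unmatched-clone profile differs from $(dn/2,dn/2)(1+o(1))$ by only $O(1)$; by the Laplace-method analysis of the strictly concave $\varphi_{d,\b}$ from~\eqref{eqvarphi} in the proof of \Lem~\ref{Lemma_mean}, the ratio $R(s)$ equals $1+o(1)$ uniformly in $s$. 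Summing over unrevealed vertices of each spin, the conditional probability that the spin $s$ of the newly discovered vertex equals $\pm s_u$ therefore equals $\eul^{-\b}/(1+\eul^{-\b})+o(1)$ for $s=s_u$ and $1/(1+\eul^{-\b})+o(1)$ for $s=-s_u$---precisely the broadcasting transitions in~\eqref{eqbroadcast}.

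Two routine points complete the proof. First, at each exploration step the probability that $c$ matches a clone of an already revealed vertex is $O(d^\ell/n)$, so the cumulative probability of creating a cycle anywhere in the exploration is $O(d^{2\ell}/n)=o(1)$; on the complement the exploration produces a rooted tree isomorphic to $\TT_d$ truncated at depth $\ell$. Second, a telescoping of the per-step $o(1)$ errors across the $O(1)$ steps yields $\dTV(\vec\sigma^*_{v_1,\ell},\vec\tau_\ell)=o(1)$. The main technical obstacle is the uniform bound $R(s)=1+o(1)$: one must show that removing $O(1)$ clones from the biased pairing model perturbs $\log Z$ only by $o(1)$, independent of the spins of the removed clones. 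This in turn rests on the concentration of the biased pairing model around the unique maximiser $\mu^*$ of $\varphi_{d,\b}$, which was already established in the proof of \Lem~\ref{Lemma_mean}.
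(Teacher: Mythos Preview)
Your argument is correct and arrives at the same conclusion, but by a genuinely different route from the paper. The paper first conditions on the global edge-type profile $\mu\in\cM_n^*$ (which is legitimate by \Lem~\ref{Lemma_mean} and \Cor~\ref{Cor_null}); once $\mu$ is fixed, the remaining randomness is a \emph{uniform} matching with prescribed numbers of $++$, $--$ and $+-$ edges, and the probability of any fixed spin-decorated tree $T$ rooted at $v_1$ is computed in one shot as a product of factorial ratios (via \Lem~\ref{Lemma_mu}). This product factors exactly into $(2/(dn))^{2e}$ times $\prod_{\text{edges}}(\eul^{-\b}/(1+\eul^{-\b}))$ or $(1/(1+\eul^{-\b}))$ according to whether the edge is monochromatic, which is precisely the broadcasting kernel. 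Your sequential BFS approach instead stays in the biased model throughout and controls the step-by-step transition probabilities via ratios $R(s)$ of residual partition functions. Both are valid; the paper's conditioning trick is shorter because it eliminates the bias up front and so avoids any analysis of $R(s)$, whereas your approach is more hands-on and makes the Markovian structure of the exploration explicit.

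One point to tighten: your stated technical obstacle, that ``removing $O(1)$ clones from the biased pairing model perturbs $\log Z$ only by $o(1)$'', is false as written --- removing a pair of clones changes $\log Z$ by $\Theta(1)$. What you actually need, and what the symmetry at the maximiser $\mu^*$ together with $|N_{+1}-N_{-1}|=O(\sqrt n)$ gives, is that this $\Theta(1)$ change is the \emph{same} up to $o(1)$ regardless of the spins of the removed clones; equivalently $R(+1)/R(-1)=1+o(1)$. With that correction your telescoping argument goes through.
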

\begin{proof}
Thanks to \Lem~\ref{Lemma_mean} and \Cor~\ref{Cor_null} we may condition on the event $\cG(\mu)$ for a $\mu\in\cM_n^*$ and on $|\rho_1(\vec\sigma^*)|\sim 1/2$.
Moreover, due to \Lem~\ref{Lemma_acyclic} we may confine ourselves to the case that the depth-$\ell$ neighbourhood of $v_1$ is acyclic.
Let $T$ be a possible outcome of the depth-$\ell$ neighbourhood of $v_1$ under these assumptions.
Moreover, let $e_{11},e_{-1-1},e_{1-1}$ be the numbers of edges of $T$ that join vertices both assigned spin $1$ under $\vec\sigma^*$, or both assigned spin $-1$, or assigned different spins, respectively.
Further, set $e=e_{11}+e_{1-1}+e_{-1-1}$.
Finally, let $\cE(T)$ be the event that $T$ occurs in $\G^*$.
Then \Lem~\ref{Lemma_mu} and \eqref{eqvarphimax} show that
\begin{align}
		\pr\brk{\cE(T)\mid\cG(\mu),\vec\sigma^*}
			&=\binom{dn\rho_1(\vec\sigma^*)-2e_{11}-e_{1-1}}{dn\mu_{11}-2e_{11}}\binom{dn\rho_1(\vec\sigma^*)}{dn\mu_{11}}^{-1}\nonumber\\
					&\quad\cdot\binom{dn\rho_{-1}(\vec\sigma^*)-2e_{-1-1}-e_{1-1}}{dn\mu_{-1-1}-2e_{-1-1}}
					\binom{dn\rho_{-1}(\vec\sigma^*)}{dn\mu_{-1-1}}^{-1}\nonumber\\
			&\quad\cdot\frac{(dn\mu_{11}-2e_{11}-1)!!(dn\mu_{-1-1}-2e_{-1-1}-1)!!(dn\mu_{1-1}-e_{1-1})!}{(dn\mu_{11}-1)!!(dn\mu_{-1-1}-1)!!(dn\mu_{1-1})!}\nonumber\\
	&\sim\bcfr{2}{dn}^{2e}\bcfr{\eul^{\b}}{1+\eul^{\b}}^{e_{1-1}}\bcfr{1}{1+\eul^{\b}}^{e_{11}+e_{-1-1}}.
\label{eqLemma_broadcast_coupling1}
	\end{align}
Hence, \eqref{eqLemma_broadcast_coupling1} shows that the probability of observing
a given 
spin assignment $\vec\sigma_{v_1,\ell}^*$ 
depends only on the number of edges
joining vertices with the same spin, and that this dependence is precisely the same as in the case \eqref{eqbroadcast} of the broadcasting process.
Thus, $\vec\sigma_{v_1,\ell}^*$ and $\vec\tau_\ell$ have total variation distance $o(1)$.
\end{proof}

For a graph $G$, a vertex $v$ of $G$ and an integer $\ell>0$ let $\partial^\ell(G,v)$ be the set of vertices at distance precisely $\ell$ from $v$.
Further, for a spin configuration $\chi\in\{\pm1\}^{V(G)}$ let
\begin{align*}
\mu_{G,\b,v,\ell}(s\mid\chi)&=\frac{\sum_{\sigma\in\{\pm1\}^{V(G)}}\vecone\{\sigma_v=s,\,\forall u\in\partial^\ell(G,v):\sigma_u=\chi_u\}\exp(-\b\cH_{G}(\sigma))}{\sum_{\sigma\in\{\pm1\}^{V(G)}}\vecone\{\forall u\in\partial^\ell(G,v):\sigma_u=\chi_u\}\exp(-\b\cH_{G}(\sigma))}\qquad(s=\pm1);
\end{align*}
in words, this is the conditional Boltzmann marginal of $v$ given the `boundary condition' $\chi$ at the vertices at distance precisely $\ell$ from $v$.

\begin{corollary}\label{Cor_broadcast_coupling}
For any $d\geq3,\b>0,\eps>0$ there exists $\ell>0$ such that $\ex\sum_{i=1}^n\abs{\mu_{\G^*,\b,v_i,\ell}(1|\vec\sigma^*)-\frac{1}{2}}<\eps n$.
\end{corollary}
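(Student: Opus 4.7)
The plan reduces the corollary to a single vertex by symmetry, applies the spatial Markov property of the Ising model together with local tree-approximation, and then invokes non-reconstruction. As stated, this implicitly requires $\b<\b^*(d)$: Lemma~\ref{lem_reconstruction} is needed, and since $\b^*(d)$ is the sharp non-reconstruction threshold, the conclusion itself must fail for $\b>\b^*(d)$.

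By the exchangeability of vertex labels in the joint law of $(\G^*,\vec\sigma^*)$, every summand $\ex|\mu_{\G^*,\b,v_i,\ell}(1|\vec\sigma^*)-1/2|$ takes the same value, so it suffices to bound $\ex|\mu_{\G^*,\b,v_1,\ell}(1|\vec\sigma^*)-1/2|<\eps$ for $\ell=\ell(d,\b,\eps)$ large. The spatial Markov property of the Ising measure says that once the boundary $\vec\sigma^*|_{\partial^\ell(\G^*,v_1)}$ is fixed, the Boltzmann conditional distribution of the spin at $v_1$ depends only on the induced subgraph on the depth-$\ell$ ball $N_\ell(v_1)$. By Lemma~\ref{Lemma_acyclic}, $N_\ell(v_1)$ is a tree \whp; on this event the Boltzmann marginal of $v_1$ equals the Ising marginal at the root of that tree with boundary $\vec\sigma^*|_{\partial^\ell}$. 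Using the standard identification of the Ising measure on a finite tree with uniform root with the broadcast measure of Section~\ref{Sec_bnr}, this tree-marginal is nothing but the Bayesian posterior of the root in the broadcast given the broadcast spins at depth $\ell$.

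Equipped with this identification, Lemma~\ref{Lemma_broadcast_coupling} supplies a coupling under which $(N_\ell(v_1),\vec\sigma^*|_{N_\ell(v_1)})$ agrees with the depth-$\ell$ truncation of the broadcast $\vec\tau$ on $\TT_d$ with probability $1-o(1)$; on this good event $\mu_{\G^*,\b,v_1,\ell}(1|\vec\sigma^*)=\pr[\vec\tau_{u_0}=1\mid\text{depth-}\ell\text{ spins of }\vec\tau]$ exactly, and on the failure event both quantities lie in $[0,1]$, so the contribution to the $L^1$ error is $o(1)$. Finally, by Lemma~\ref{lem_reconstruction} together with the tree Markov property (which makes conditioning on depth-$\ell$ spins equivalent, after a one-step index shift, to the $\cT_\ell$-conditioning stated there), the expected $L^1$ distance of the broadcast posterior from $1/2$ tends to $0$ as $\ell\to\infty$. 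Choosing $\ell$ so that the non-reconstruction term is below $\eps/2$ and then $n$ so that the coupling and acyclicity errors together contribute $<\eps/2$ concludes the proof.

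The main obstacle is making clean the identification ``Boltzmann conditional marginal in $\G^*$ equals broadcast posterior on $\TT_d$''. Three ingredients combine: spatial Markov localises the computation to the ball $N_\ell(v_1)$; acyclicity turns this into a tree computation; the Ising/broadcast equivalence on trees identifies the tree-Ising marginal with the broadcast posterior. Because the resulting marginal is a $[0,1]$-valued functional of a finite tree decorated with $\pm1$ leaf labels, the total-variation estimate of Lemma~\ref{Lemma_broadcast_coupling} upgrades automatically into an $L^1$ bound on the marginals themselves, which is exactly what the corollary asks for.
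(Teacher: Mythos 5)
Your proposal is correct and follows essentially the same route as the paper's proof, which reduces to a single vertex by exchangeability of $(\G^*,\vec\sigma^*)$ and then combines \Lem~\ref{Lemma_broadcast_coupling} with \Lem~\ref{lem_reconstruction}; you merely spell out the spatial-Markov/acyclicity/tree-broadcast identification that the paper leaves implicit. Your side remark that the statement tacitly requires $\b<\b^*(d)$ is also apt, since the paper's own proof invokes \Lem~\ref{lem_reconstruction} and the corollary is only used in the regime $\b\le\b^*(d)$.
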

\begin{proof}
Because the random pair $(\G^*,\vec\sigma^*)$ is invariant under vertex permutations, we have
\begin{align*}
\ex\sum_{i=1}^n\abs{\mu_{\G^*,\b,v_i,\ell}(1|\vec\sigma^*)-\frac{1}{2}}=n\ex\abs{\mu_{\G^*,\b,v_i,\ell}(1|\vec\sigma^*)-\frac{1}{2}} 
\end{align*}
and \Lem~\ref{lem_reconstruction} and \Lem~\ref{Lemma_broadcast_coupling} show that the r.h.s.\ gets small in the limit of large $\ell$.
\end{proof}

\begin{proof}[Proof of \Lem~\ref{Cor_O}]
We apply \Cor~\ref{Cor_broadcast_coupling} to a function $\eps'_n=o(1)$ that tends to zero slowly.
Specifically, let $X(\G^*,\vec\sigma^*)=\sum_{i=1}^n\vecone\{|\mu_{\G^*,\b,v_i,\ell}(1|\vec\sigma^*)-\frac{1}{2}|>\eps'\}$.
\Cor~\ref{Cor_broadcast_coupling} implies together with Markov's inequality that $\pr[X(\G^*,\vec\sigma^*)>\eps''n]\leq\eps''$ for a suitable $1\ll\ell=o(\log n)$, provided that $\eps',\eps''=o(1)$ tend to zero slowly enough.
Hence, \Lem~\ref{Lemma_null} shows that
\begin{align}\label{eqCor_O0}
\ex[Z_{\GG,\b}\pr[X(\GG,\vec\sigma_{\GG})\leq\eps''n\mid\GG]]\sim\ex[Z_{\GG,\b}].
\end{align}
We claim that \eqref{eqCor_O0} implies that there exists $n^{-1/4}\ll\delta=o(1)$ such that
\begin{align}\label{eqCor_O1}
\ex[Z_{\GG,\b}\vecone\{\pr[X(\GG,\vec\sigma_{\GG})>\delta n\mid\GG]<\delta \}]\sim\ex[Z_{\GG,\b}].
\end{align}
Indeed, \eqref{eqCor_O0} shows that for a suitable $\delta $,
\begin{align*}
\ex[Z_{\GG,\b}\vecone\{\pr[X(\GG,\vec\sigma_{\GG})>\delta n\mid\GG]\geq\delta \}] &\leq\delta^{-1} \ex[Z_{\GG,\b}\pr[X(\GG,\vec\sigma_{\GG})>\eps''n\mid\GG]]=o(\ex[Z_{\GG,\b}]).
\end{align*}

Due to \eqref{eqCor_O1} it suffices to prove that there exists $\eps=o(1)$ such that for any $d$-regular graph $G$ of sufficiently large order $n$ the following is true:
\begin{align}\label{eqCor_O2}
\mbox{if $\pr[X(G,\vec\sigma_{G})>\delta n]<\delta$ then }
\ex\abs{\vec\sigma_{G}\cdot\vec\sigma_{G}'}\leq\eps n.
\end{align}
Indeed, since $\ex|\vec\sigma_{G}\cdot\vec\sigma_{G}'|=\ex[\ex[|\vec\sigma_{G}\cdot\vec\sigma_{G}'|\mid\vec\sigma_G']]$, we may condition on $\vec\sigma_G'$.
Hence, let $V_1'$ contain all vertices $v\in V(G)$ such that $\vec\sigma'_{G,v}=1$ and let $V_{-1}'=V(G)\setminus V_1'$.
Further, let
\begin{align*}
Y_s&=\sum_{v\in V_s'}\vecone\{\vec\sigma_{G,v}=1\}\qquad(s=\pm1).
\end{align*}
Then to establish \eqref{eqCor_O2} we just need to prove that
\begin{align}\label{eqCor_O3}
|Y_s-|V_s'|/2|&=o(n)\qquad\mbox{ \whp\ for $s=\pm1$.}
\end{align}
 
To deduce \eqref{eqCor_O3} fix $s=\pm1$.
If $|V_s'|<\delta^{1/3}n$, say, then \eqref{eqCor_O3} is immediate.
Hence, we may assume that $|V_s'|\geq\delta^{1/3}n$.
Draw a vertex $\vec v\in V_s'$ uniformly at random, independently of $\vec\sigma_{G}$.
Then the assumption $\pr[X(G,\vec\sigma_{G})\leq\delta n\mid G]$ implies that
\begin{align}\label{eqCor_O4}
\pr\brk{|\mu_{ G,\b,\vec v,\ell}(1|\vec\sigma_{ G})-1/2|>\eps'\mid \vec\sigma'_{ G}}&<\delta^{2/3}.
\end{align}
Now, consider a spin configuration $\vec\sigma_{ G}''$ drawn from the Boltzmann distribution given the event
\begin{align*}
\cA(\vec v,\vec\sigma_{ G})&=\cbc{\sigma\in\{\pm1\}^{V(G)}:\sigma_w=\vec\sigma_{ G,w}\mbox{ for all $w$ at distance $\ell$ or more from $\vec v$}}.
\end{align*}
In other words, $\vec\sigma_{ G}''$ is obtained by re-sampling the spins of the vertices at distance less than $\ell$ from $\vec v$ from the Boltzmann distribution with the boundary condition that $\vec\sigma_{ G}$ induces on the vertices at distance precisely $\ell$ from $\vec v$.
Since $\vec\sigma_{ G}$ is a sample from $\mu_{ G,\b}$, so is $\vec\sigma_{ G}''$.
Moreover, $\vec\sigma_{ G}''$ is independent of $\vec\sigma_{ G}'$.
Therefore, \eqref{eqCor_O4} yields
\begin{align}\label{eqCor_O5}
\ex[Y_s\mid \vec\sigma_{ G}']&=|V_s'|\pr\brk{\vec\sigma_{ G,\vec v}''=1\mid \vec\sigma_{G}'}=|V_s'|\ex[\mu_{G,\b,\vec v,\ell}(1|\vec\sigma_{G})\mid G,\vec\sigma'_{G}]\sim|V_s'|/2.
\end{align}

To complete the proof we apply similarly reasoning to estimate $\ex[Y_s^2\mid G,\vec\sigma_{ G}']$.
Specifically, let $\vec v'$ be a second vertex drawn uniformly from $V_s'$, independently of $\vec v$.
Then in analogy to \eqref{eqCor_O4} we obtain
\begin{align}\label{eqCor_O6}
\pr\brk{|\mu_{ G,\b,\vec v,\ell}(1|\vec\sigma_{ G})-1/2|>\eps'\vee |\mu_{ G,\b,\vec v',\ell}(1|\vec\sigma_{ G})-1/2|>\eps' \mid \vec\sigma'_{ G}}&<2\delta^{2/3}.
\end{align}
Further, draw $\vec\sigma_{ G}'''$ from the Boltzmann distribution given
\begin{align*}
\cA(\vec v,\vec v',\vec\sigma_{ G})&=\cbc{\sigma\in\{\pm1\}^{V(G)}:\sigma_w=\vec\sigma_{ G,w}\mbox{ for all $w$ at distance $\ell$ or more from both $\vec v,\vec v'$}}.
\end{align*}
Since $G$ is $d$-regular, $\ell$ is fixed and $|V_s'|\geq\delta^{1/3}n\geq\sqrt n$, the vertices $\vec v,\vec v'$ are distance more than $2\ell$ apart \whp\
In this case the spins $\vec\sigma_{\vec v}''',\vec\sigma_{\vec v'}'''$ are conditionally independent given $\vec\sigma_G'$.
Consequently, \eqref{eqCor_O6} implies that
\begin{align}\nonumber
\ex[Y_s^2\mid G,\vec\sigma_{ G}']&=|V_s'|^2\pr\brk{\vec\sigma_{ G,\vec v}'''=1,\vec\sigma_{ G,\vec v'}'''=1\mid G,\vec\sigma_{ G}'}\\&
		=|V_s'|^2\ex[\mu_{ G,\b,\vec v,\ell}(1|\vec\sigma_{ G})\mid G,\vec\sigma'_{ G}]\ex[\mu_{ G,\b,\vec v',\ell}(1|\vec\sigma_{ G})\mid G,\vec\sigma'_{ G}]+o(2^2)=|V_s'|^2/4+o(n^2).
\label{eqCor_O7}
\end{align}
Finally, combining \eqref{eqCor_O5}, \eqref{eqCor_O7} and Chebyshev's inequality, we obtain \eqref{eqCor_O3}, completing the proof.
\end{proof}

\begin{proof}[Proof of \Lem~\ref{cor_reconstruction}]
The lemma follows directly from \Cor~\ref{Cor_null}, \Lem~\ref{Lemma_simple*} and \Lem~\ref{Cor_O}.
\end{proof}

\subsection{The truncated second moment}\label{Sec_smm}
The aim in this section is to show the following.

\begin{lemma}\label{Lemma_O2}
For any $d\geq3,\b>0$ there exists $\eps_n=o(1)$ such that the event $\cO$ from \eqref{eqOdef} satisfies $$\ex\brk{Z_{\GG(n,d),\b}^2\vecone\cbc{\cO}}\leq \ex\brk{Z_{\GG(n,d),\b}}^2\exp(o(n)).$$
\end{lemma}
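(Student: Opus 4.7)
\medskip

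\noindent\textbf{Proof plan for \Lem~\ref{Lemma_O2}.}
The plan is to combine a ``conditional Markov'' argument on $\cO$ with the known second-moment bound~\eqref{eqTech9}, but restricted to pairs of spin assignments with small inner product. Specifically, for a parameter $\delta>0$ introduce the truncated second moment
\begin{align*}
	Z_{\GG,\b}^{(2,\delta)} \;=\; \sum_{\sigma,\sigma'\in\{\pm1\}^{V_n}:\ |\sigma\cdot\sigma'|\leq\delta n}\psi_{\GG,\b}(\sigma)\psi_{\GG,\b}(\sigma').
\end{align*}
The first step is to show that on the event $\cO$, almost all of $Z_{\GG,\b}^2$ already comes from $Z_{\GG,\b}^{(2,\delta)}$ provided $\delta$ is somewhat larger than $\eps_n$. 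Indeed, the defining inequality $\ex[|\vec\sigma_\GG\cdot\vec\sigma_\GG'|\mid\GG]<\eps_n n$ of $\cO$ translates (after multiplying by $Z_{\GG,\b}^2$) to
\begin{align*}
	\sum_{\sigma,\sigma'}\psi_{\GG,\b}(\sigma)\psi_{\GG,\b}(\sigma')|\sigma\cdot\sigma'| \;<\; \eps_n n\, Z_{\GG,\b}^2,
\end{align*}
so Markov's inequality applied to the Boltzmann product measure yields $Z_{\GG,\b}^2-Z_{\GG,\b}^{(2,\delta)}<(\eps_n/\delta)Z_{\GG,\b}^2$. Choosing, say, $\delta=\delta_n=\sqrt{\eps_n}\to 0$ then gives $Z_{\GG,\b}^2\vecone\cO\leq 2\,Z_{\GG,\b}^{(2,\delta_n)}$ for large $n$. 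In particular,
\begin{align*}
	\ex\brk{Z_{\GG,\b}^2\vecone\cO}\;\leq\; 2\,\ex\brk{Z_{\GG,\b}^{(2,\delta_n)}}.
\end{align*}

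Next, the task is to bound $\ex[Z_{\GG,\b}^{(2,\delta_n)}]$. This is precisely the quantity bounded in \eqref{eqTech9}, except with the maximisation restricted to $|\alpha|\leq\delta_n$. Revisiting the derivation that leads to \eqref{eqTech9} (which rests on the convexity-of-KL trick of~\cite{Achlioptas_2004,Coja_2016} to reduce from random regular to \ER\ contributions of pairs with a given inner product $\alpha n$), the sum decomposes dyadically over values of $\alpha=\sigma\cdot\sigma'/n$, each term bounded by $\exp(n f_d(\alpha,\b)+o(n))$. Restricting the sum to $|\alpha|\leq\delta_n$ therefore gives
\begin{align*}
	\ex\brk{Z_{\GG,\b}^{(2,\delta_n)}} \;\leq\; \exp\bc{n\max_{|\alpha|\leq\delta_n} f_d(\alpha,\b)+o(n)}.
\end{align*}
Finally, since $f_d(\cdot,\b)$ is continuous at $0$, $\max_{|\alpha|\leq\delta_n}f_d(\alpha,\b)\to f_d(0,\b)$ as $n\to\infty$. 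Combining this with the identity $f_d(0,\b)=2\log 2+d\log((1+\eul^{-\b})/2)$ from \eqref{eqTech5_2} and with the first-moment asymptotics of \Lem~\ref{Cor_simple*}, one obtains
\begin{align*}
	\ex\brk{Z_{\GG,\b}^{(2,\delta_n)}}\;\leq\;\exp\bc{n f_d(0,\b)+o(n)}\;=\;\ex[Z_{\GG,\b}]^2\exp(o(n)),
\end{align*}
which yields the claim (after perhaps replacing $\eps_n$ by a slightly slower $o(1)$ sequence so as to accommodate both \Lem~\ref{Cor_O} and the above).

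The one subtle point — and the step I expect to require the most care — is the restricted second-moment bound in the middle paragraph. The uniform bound~\eqref{eqTech9} is stated only for the whole sum; what is needed is the stronger, $\alpha$-resolved version obtained by applying the Achlioptas–Coja-Oghlan argument to each fixed value of $\sigma\cdot\sigma'=\alpha n$ separately. Once this finer bound is in hand, the rest is a soft continuity/Markov argument; selecting $\delta_n=\sqrt{\eps_n}$ (or any intermediate sequence) and $\eps_n\to 0$ sufficiently slowly ensures both the loss $\eps_n/\delta_n=o(1)$ in the truncation and the loss $\max_{|\alpha|\leq\delta_n}f_d(\alpha,\b)-f_d(0,\b)=o(1)$ from continuity, both of which disappear in the $\exp(o(n))$ factor.
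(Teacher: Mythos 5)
Your overall architecture coincides with the paper's: use the defining inequality of $\cO$ together with a Markov argument to discard pairs with $|\sigma\cdot\sigma'|>\delta_n n$ (this is the paper's \Cor~\ref{Lemma_sm_trunc}), bound the remaining truncated second moment by an entropy--energy exponent that is continuous at overlap $0$, and compare with $\ex[Z_{\GG,\b}]^2$ via \Lem~\ref{Cor_simple*}. The truncation step and the final continuity step are fine. The gap sits exactly at the step you flag: the bound $\exp(nf_d(\alpha,\b)+o(n))$ for the contribution of pairs with $\sigma\cdot\sigma'=\alpha n$ follows neither from \eqref{eqTech9} nor from ``applying the Achlioptas--Coja-Oghlan argument at each fixed $\alpha$'', because fixing the overlap does not fix the empirical distribution of spin pairs: the slice still contains heavily magnetised $\sigma,\sigma'$, and the convexity-of-KL reduction yields an exponent depending on the full distribution $\rho\in\cP(\{\pm1\}^2)$ of $(\sigma_v,\sigma'_v)$, of which $f_d(\alpha,\b)$ from \eqref{eqTech5a} is only the value at the balanced point $\rho_{++}=\rho_{--}=(1+\alpha)/4$, $\rho_{+-}=\rho_{-+}=(1-\alpha)/4$. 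To complete your route you would still have to maximise over the magnetisations $m_1,m_2$ within each overlap slice and show that the balanced point wins; this is true for the antiferromagnet, since both $H(\rho)$ and the energy factor $\ex_{\rho\otimes\rho}\brk{\eul^{-\b(\vecone\{s=t\}+\vecone\{s'=t'\})}}=\bc{\frac{1+\eb}2}^2-\frac{(1-\eb)(1+\eb)}4\bc{m_1^2+m_2^2}+\bc{\frac{1-\eb}2}^2\alpha^2$ are maximised at $m_1=m_2=0$, but it is an additional lemma to prove, not a citation.

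The paper closes this hole differently: it also truncates the magnetisations, which comes for free from $\cO$. Fact~\ref{Fact_perp} (essentially $\ex[|\vec\sigma_\GG\cdot\vecone|\mid\GG]\le\sqrt{n\,\ex[|\vec\sigma_\GG\cdot\vec\sigma_\GG'|\mid\GG]}$, obtained by testing the overlap against $\mathrm{sign}(\sigma\cdot\vecone)\,\sigma$) shows that on $\cO$ typical samples are also nearly balanced, so \Cor~\ref{Lemma_sm_trunc} restricts to $|\sigma\cdot\vecone|,|\sigma'\cdot\vecone|,|\sigma\cdot\sigma'|\le\delta n$ simultaneously. With the vertex marginals thus pinned near uniform, \Lem~\ref{Lemma_Philipp} becomes a concave optimisation over the edge statistics $\mu$ alone, carried out directly in the pairing model (no \ER\ comparison is needed) and solved exactly, giving $f_d(0,\b)+O(\delta)$. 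So either add the magnetisation truncation via Fact~\ref{Fact_perp}, or supply the within-slice domination of balanced pairs sketched above; as written, the key estimate is asserted rather than proved, and the justification you propose for it is not sufficient on its own.
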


Toward the proof of \Lem~\ref{Lemma_O2} we require the following observation.

\begin{fact}\label{Fact_perp}
Suppose that $(\mu_n)_{n\geq1}$ is a sequence of probability measures $\mu_n\in\cP(\{\pm1\}^n)$ such that
\begin{align*}
\lim_{n\to\infty}\sum_{\sigma,\sigma'\in\{\pm1\}^n}\frac{|\sigma\cdot\sigma'|}n\mu_n(\sigma)\mu_n(\sigma')=0.
\end{align*}
Then $\lim_{n\to\infty}n^{-1}\sum_{\sigma\in\{\pm1\}^n}|\sigma\cdot\vecone|\mu_n(\sigma)\mu_n(\sigma')=0.$
\end{fact}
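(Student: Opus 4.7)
\medskip

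\noindent\emph{Proof plan for Fact~\ref{Fact_perp}.}
The plan is to relate the quantity to be controlled, namely $\Erw_{\mu_n}|\sigma\cdot\vecone|$, to the hypothesis by passing through second moments and Cauchy--Schwarz.  In one direction a pairwise Cauchy--Schwarz on $n^2$ index pairs will convert the sum $\sum_{i,j}\Erw[\sigma_i\sigma_j]$ (which is exactly $\Erw[(\sigma\cdot\vecone)^2]$) into $\sum_{i,j}(\Erw[\sigma_i\sigma_j])^2$, and independence of $\sigma,\sigma'\sim\mu_n$ identifies the latter with $\Erw_{\mu_n\tensor\mu_n}[(\sigma\cdot\sigma')^2]$.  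In the other direction, the trivial pointwise bound $|\sigma\cdot\sigma'|\le n$ upgrades the hypothesized $L^1$ smallness of $|\sigma\cdot\sigma'|/n$ to $L^2$ smallness of $|\sigma\cdot\sigma'|/n$, after which Jensen closes the loop.

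Concretely, I would first write
\begin{align*}
\Erw_{\mu_n}\brk{(\sigma\cdot\vecone)^2}=\sum_{i,j=1}^n\Erw_{\mu_n}[\sigma_i\sigma_j],
\end{align*}
which is manifestly non-negative, and then apply Cauchy--Schwarz to the $n^2$ numbers $a_{ij}=\Erw_{\mu_n}[\sigma_i\sigma_j]$ to get $\bc{\sum_{i,j}a_{ij}}^2\le n^2\sum_{i,j}a_{ij}^2$.  By the independence of two samples,
\begin{align*}
\sum_{i,j}(\Erw_{\mu_n}[\sigma_i\sigma_j])^2=\sum_{i,j}\Erw_{\mu_n\tensor\mu_n}[\sigma_i\sigma_j\sigma_i'\sigma_j']=\Erw_{\mu_n\tensor\mu_n}\brk{(\sigma\cdot\sigma')^2},
\end{align*}
so altogether $\Erw_{\mu_n}[(\sigma\cdot\vecone)^2]\le n\sqrt{\Erw_{\mu_n\tensor\mu_n}[(\sigma\cdot\sigma')^2]}$.

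To finish, because $|\sigma\cdot\sigma'|\le n$ pointwise, the hypothesis gives
\begin{align*}
\Erw_{\mu_n\tensor\mu_n}\brk{(\sigma\cdot\sigma')^2}\le n\cdot\Erw_{\mu_n\tensor\mu_n}|\sigma\cdot\sigma'|=o(n^2),
\end{align*}
hence $\Erw_{\mu_n}[(\sigma\cdot\vecone)^2]\le n\sqrt{o(n^2)}=o(n^2)$, and another application of Jensen yields $\Erw_{\mu_n}|\sigma\cdot\vecone|\le\sqrt{\Erw_{\mu_n}[(\sigma\cdot\vecone)^2]}=o(n)$, which is the claim.  I do not anticipate any genuine obstacle here: the argument is purely elementary and does not require any symmetry assumption on $\mu_n$ (nor any use of the explicit $\{\pm1\}$-structure beyond the pointwise bound $|\sigma\cdot\sigma'|\le n$).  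The only point deserving brief mention is that the sums being taken through square roots are non-negative, which is immediate since $\sum_{i,j}\Erw_{\mu_n}[\sigma_i\sigma_j]=\Erw_{\mu_n}[(\sigma\cdot\vecone)^2]\ge 0$.
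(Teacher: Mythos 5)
Your argument is correct and complete. Note that the paper itself states Fact~\ref{Fact_perp} without proof (it is invoked directly in the proof of \Cor~\ref{Lemma_sm_trunc}), so there is no in-paper argument to compare against; your write-up supplies the missing elementary proof. The chain of inequalities checks out: $\Erw_{\mu_n}[(\sigma\cdot\vecone)^2]=\sum_{i,j}\Erw_{\mu_n}[\sigma_i\sigma_j]$, Cauchy--Schwarz over the $n^2$ pairs together with $\sum_{i,j}(\Erw_{\mu_n}[\sigma_i\sigma_j])^2=\Erw_{\mu_n\tensor\mu_n}[(\sigma\cdot\sigma')^2]$ gives $\Erw_{\mu_n}[(\sigma\cdot\vecone)^2]\leq n\sqrt{\Erw_{\mu_n\tensor\mu_n}[(\sigma\cdot\sigma')^2]}$, the pointwise bound $|\sigma\cdot\sigma'|\leq n$ converts the $L^1$ hypothesis into $\Erw_{\mu_n\tensor\mu_n}[(\sigma\cdot\sigma')^2]=o(n^2)$, and Jensen closes the argument. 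It is also the right route: bounding $|\Erw_{\mu_n}[\sigma\cdot\vecone]|$ via the mean vector alone would not suffice, since the conclusion requires the absolute value inside the expectation, and your second-moment detour handles exactly that (quantitatively, $\Erw_{\mu_n\tensor\mu_n}|\sigma\cdot\sigma'|\leq\eps n$ yields $\Erw_{\mu_n}|\sigma\cdot\vecone|\leq\eps^{1/4}n$, which is all the application needs). One cosmetic remark: the displayed conclusion in the paper contains a stray factor $\mu_n(\sigma')$; you correctly read it as $n^{-1}\Erw_{\mu_n}|\sigma\cdot\vecone|\to0$, which is the intended statement.
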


\begin{corollary}\label{Lemma_sm_trunc}
For any $d\geq3,\b>0$ there exists $\delta=\delta_n=o(1)$ such that 
\begin{align*}
\ex\brk{Z_{\GG(n,d),\b}^2\vecone\cbc{\cO}}&\leq(1+o(1))\sum_{\sigma,\sigma'\in\{\pm1\}^{V_n}}\vecone\cbc{|\sigma\cdot\vecone|,|\sigma'\cdot\vecone|,|\sigma\cdot\sigma'|\leq\delta n}\ex[\exp(-\b\cH_{\GG}(\sigma)-\b\cH_{\GG}(\sigma'))].
\end{align*}
\end{corollary}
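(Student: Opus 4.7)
The plan is to expand the second moment and show that, on the event $\cO$, the partition function squared is essentially supported on pairs $(\sigma,\sigma')$ with both small mutual overlap and small individual magnetisation; the corollary then follows by taking expectations and dropping the indicator. Rewriting the defining inequality of $\cO$ as
\begin{align*}
\sum_{\sigma,\sigma'\in\{\pm1\}^{V_n}}|\sigma\cdot\sigma'|\exp\bc{-\b\cH_\GG(\sigma)-\b\cH_\GG(\sigma')} = Z_{\GG,\b}^2\,\ex\brk{|\vec\sigma_\GG\cdot\vec\sigma_\GG'|\mid\GG} < \eps_n n\, Z_{\GG,\b}^2
\end{align*}
and applying Markov's inequality, we obtain pointwise on $\cO$ that
\begin{align*}
\sum_{(\sigma,\sigma'):|\sigma\cdot\sigma'|>\delta n}\exp\bc{-\b\cH_\GG(\sigma)-\b\cH_\GG(\sigma')} \leq \frac{\eps_n}{\delta}\, Z_{\GG,\b}^2.
\end{align*}

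For the magnetisation I would invoke Fact~\ref{Fact_perp}. A standard contradiction argument upgrades its asymptotic conclusion to a uniform rate: there is a sequence $\eps_n'=o(1)$, depending only on $\eps_n$, such that every $G$ satisfying $\ex[|\vec\sigma_G\cdot\vec\sigma_G'|\mid G]<\eps_n n$ also satisfies $\ex[|\vec\sigma_G\cdot\vecone|\mid G]<\eps_n' n$. Markov now bounds the weight of pairs with $|\sigma\cdot\vecone|>\delta n$ or $|\sigma'\cdot\vecone|>\delta n$ by $(2\eps_n'/\delta)\, Z_{\GG,\b}^2$. Choosing $\delta=\delta_n\to 0$ slowly enough that $\delta_n\gg\max(\eps_n,\eps_n')$, for instance $\delta_n=\sqrt{\eps_n+\eps_n'}$, the total weight of the `bad' pairs is $o(Z_{\GG,\b}^2)$, giving, on $\cO$,
\begin{align*}
Z_{\GG,\b}^2 \leq (1+o(1)) \sum_{\sigma,\sigma'} \vecone\cbc{|\sigma\cdot\vecone|,\,|\sigma'\cdot\vecone|,\,|\sigma\cdot\sigma'|\leq\delta_n n}\exp\bc{-\b\cH_\GG(\sigma)-\b\cH_\GG(\sigma')}.
\end{align*}
Multiplying by $\vecone\cbc{\cO}$, taking expectations, and dropping the indicator $\vecone\cbc{\cO}$ on the right-hand side as an upper bound delivers the stated inequality.

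The main obstacle is the uniform-rate upgrade of Fact~\ref{Fact_perp}. As stated the fact is only an asymptotic assertion about sequences of measures, yet the argument above must apply it for each finite $n$ to the random Boltzmann measure $\mu_{\GG,\b}$ on the event $\cO$. The upgrade is routine—were there no such rate, one would extract a sequence of measures violating the fact—but has to be articulated so that $\eps_n'$ depends only on $\eps_n$ and not on the realisation of $\GG$. Past this point the proof is purely a double application of Markov's inequality and an exchange of sum and expectation.
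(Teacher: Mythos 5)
Your proof is correct and follows essentially the same route as the paper, which simply declares the corollary an immediate consequence of Fact~\ref{Fact_perp} and the definition \eqref{eqOdef} of $\cO$: you spell out exactly the implicit steps, namely Markov's inequality under the Boltzmann measure for the overlap and magnetisation constraints and the compactness/contradiction upgrade of Fact~\ref{Fact_perp} to a rate $\eps_n'$ depending only on $\eps_n$ and not on the realisation of $\GG$. Your handling of the quantifiers (choosing $\delta_n\gg\eps_n+\eps_n'$, multiplying by $\vecone\cbc{\cO}$, and dropping the indicator on the right) is exactly what the paper's terse proof presupposes.
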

\begin{proof}
This is an immediate consequence of Fact~\ref{Fact_perp} and the definition \eqref{eqOdef} of the event $\cO$.
\end{proof}

\begin{lemma}\label{Lemma_Philipp}
For any $d\geq3,\b>0$ we have
\begin{align}\label{eqLemma_Philipp}
\sum_{\sigma,\sigma'\in\{\pm1\}^{V_n}}\vecone\cbc{|\sigma\cdot\vecone|,|\sigma'\cdot\vecone|,|\sigma\cdot\sigma'|\leq\delta n}\ex[\exp(-\b\cH_{\G}(\sigma)-\b\cH_{\G}(\sigma'))]
&\leq \exp\bc{nf_d(0,\beta)+O(\delta n)}.
\end{align}
\end{lemma}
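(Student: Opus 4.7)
The plan is to imitate the first-moment calculation of Section \ref{sec_firstmmt}, now applied to pairs of spin configurations. For $\sigma,\sigma'\in\{\pm 1\}^{V_n}$, define the joint vertex-type distribution $\nu=\nu(\sigma,\sigma')\in\cP(\{\pm 1\}^2)$ by $\nu_{ab}=n^{-1}|\{i\in[n]:\sigma_{v_i}=a,\sigma'_{v_i}=b\}|$. The three inner-product constraints in \eqref{eqLemma_Philipp}, together with $\sum_{a,b}\nu_{ab}=1$, form an invertible (Hadamard-type) linear system in $\nu$ whose unperturbed solution is $\tilde\nu_{ab}\equiv 1/4$; hence every admissible $\nu$ satisfies $\nu_{ab}=1/4+O(\delta)$ for each $(a,b)\in\{\pm 1\}^2$, so $\nu$ lies in a $O(\delta)$-neighbourhood of $\tilde\nu$ in the interior of the simplex.

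Fix such a $\nu$. Following the template of Lemma \ref{Lemma_mu} but with four vertex classes instead of two, I decompose $\G$ by its edge-type profile: let $\omega$ be a symmetric probability distribution on $\{\pm 1\}^2\times\{\pm 1\}^2$ with $\sum_{(c,d)}\omega_{(a,b),(c,d)}=\nu_{ab}$, representing the fraction of ordered edge-endpoints of each joint type. Stirling estimation gives
\[
\pr\brk{\G\mbox{ has edge profile }\omega\mid\nu(\sigma,\sigma')=\nu}=\exp\bc{-\tfrac{dn}{2}\KL{\omega}{\nu\tensor\nu}+O(\log n)},
\]
while on any such $\G$ one has $\cH_\G(\sigma)+\cH_\G(\sigma')=\tfrac{dn}{2}\sum_{(a,b),(c,d)}\omega_{(a,b),(c,d)}E(a,b,c,d)$ with $E(a,b,c,d)=\vecone\{a=c\}+\vecone\{b=d\}$. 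Since there are only polynomially many feasible $\omega$, Gibbs' inequality
\[
\KL{\omega}{\nu\tensor\nu}+\b\,\omega\cdot E\geq -\log Z(\nu),\qquad Z(\nu):=\sum_{(a,b),(c,d)}\nu_{ab}\nu_{cd}\eul^{-\b E(a,b,c,d)},
\]
combined with the multinomial estimate $\binom{n}{n\nu}=\exp(nH(\nu)+O(\log n))$, shows that the total contribution of a single $\nu$ to the left-hand side of \eqref{eqLemma_Philipp} is at most $\exp(nH(\nu)+\tfrac{dn}{2}\log Z(\nu)+O(\log n))$.

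A direct computation at $\tilde\nu$ gives $H(\tilde\nu)=2\log 2$ and $Z(\tilde\nu)=(1+\eul^{-\b})^2/4$, so this exponent equals exactly $n[(2-d)\log 2+d\log(1+\eul^{-\b})]=nf_d(0,\b)$. Because $\tilde\nu$ lies in the interior of the simplex, the map $\nu\mapsto H(\nu)+\tfrac{d}{2}\log Z(\nu)$ is smooth and hence Lipschitz on a neighbourhood of $\tilde\nu$, so every admissible $\nu$ contributes at most $\exp(nf_d(0,\b)+O(\delta n))$. Summing over the $\mathrm{poly}(n)$ admissible profiles $\nu$ yields \eqref{eqLemma_Philipp}.

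The only genuinely delicate step is the tracking of the parity, marginal, and symmetry constraints on $\omega$ needed so that the Stirling estimate condenses into the clean $\exp(-\tfrac{dn}{2}\KL{\omega}{\nu\tensor\nu})$ form. This is an accounting exercise that parallels Lemma \ref{Lemma_mu} with four vertex classes rather than two, and requires no new ideas; the Gibbs/Lipschitz argument that controls the resulting double sum is soft and relies only on $\tilde\nu$ sitting in the interior of the simplex.
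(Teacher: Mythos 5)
Your proposal is correct, and it shares the paper's overall skeleton -- classify pairs $(\sigma,\sigma')$ by the four-type vertex profile and the sixteen-type edge profile in the pairing model, estimate the number of pairs and the probability of an edge profile by Stirling (your $\exp(nH(\nu))$ and $\exp(-\tfrac{dn}{2}\KL{\omega}{\nu\tensor\nu}+O(\log n))$ match the paper's $\cX_\mu$, $\cY_\mu$, $\cZ_\mu/(dn-1)!!$ computation), note that the three overlap constraints pin $\nu$ to an $O(\delta)$-neighbourhood of the uniform profile, and finish with polynomially many profiles plus a smoothness argument. Where you genuinely diverge is the inner optimisation over the edge profile: the paper fixes the overlap $\alpha$, observes that $\mu\mapsto H(\rho(\mu))-\tfrac d2\KL{\mu}{\rho(\mu)\tensor\rho(\mu)}-\tfrac{d\b}2\cH(\mu)$ is concave, and explicitly exhibits the stationary point $\mu_\alpha$ (equations \eqref{eqLemma_Philipp6}--\eqref{eqLemma_Philipp8}), which after evaluation yields the bound $f_d(\alpha,\b)$ for every $\alpha$ before a Lipschitz step at $\alpha=0$; you instead invoke the Gibbs variational inequality $\KL{\omega}{\nu\tensor\nu}+\b\,\omega\cdot E\geq-\log Z(\nu)$, which removes the need to solve for the maximiser at all, and only evaluate $H(\nu)+\tfrac d2\log Z(\nu)$ at the uniform $\tilde\nu$ (correctly recovering $f_d(0,\b)$) before applying Lipschitz continuity in $\nu$. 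Your route is shorter and softer, and it suffices because the lemma only requires the value near zero overlap; the paper's explicit computation buys the full function $f_d(\alpha,\b)$, tying the truncated second moment back to the second-moment landscape of Section 2, which is expository value rather than logical necessity here. The one step you defer -- the parity/symmetry bookkeeping that turns the matching counts into the clean $\exp(-\tfrac{dn}{2}\KL{\omega}{\nu\tensor\nu})$ form -- is exactly the accounting the paper carries out in \eqref{eqLemma_Philipp1} and is harmless at the required $\exp(O(\delta n))$ precision, so I see no gap.
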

\begin{proof}
Given $\sigma,\sigma' \in \pmone^{V_n}$ let $\rho = \rho(\sigma, \sigma') = (\rho_{s,t}(\sigma, \sigma))_{s,t \in \pmone}$ and $\mu = \mu(\sigma, \sigma', \G) = (\mu_{r,s,t,u}(\sigma, \sigma', \G))_{r,s,t,u \in \pmone}$ be the vectors with entries
\begin{align*}
\rho_{s,t}(\sigma,\sigma')&=\frac{1}{n}\sum_{v=1}^n\vecone\cbc{\sigma_v=s,\sigma_w'=t}\qquad(s,t\in\pmone),\\
\vec\mu_{r,s,t,u}(\sigma,\sigma')&=\frac{2}{dn}\sum_{\vwe \in E(\G)}
		\vecone\cbc{\sigma_v=r,\sigma'_v=s,\sigma_w=t,\sigma'_w=u}
		\qquad(r,s,t,u\in\pmone).
\end{align*}
Thus, $\rho(\sigma,\sigma')$ is the empirical distribution of the spin combinations that $\sigma,\sigma'$ assign to the vertices.
Similarly, $\vec\mu$ comprises the statistics of the edges of $\G$ joining vertices with different spin combinations.
Let 
\begin{align}\label{eqLemma_Philipp00}
	\Sigma^\tensor&=\cbc{\sigma,\sigma' \in \pmone^{V_n}:|\sigma\cdot\vecone|,\,|\sigma'\cdot\vecone|,\,|\sigma\cdot\sigma'|\leq\delta n},&
	\cR^\tensor&=\{\rho(\sigma,\sigma'):\sigma,\sigma'\in\Sigma^\tensor\},
\end{align}
and let $\cM^\tensor$ be the set of all possible outcomes of the random vector $\vec\mu(\sigma,\sigma')$ for any $\sigma,\sigma'\in\Sigma^\tensor$.
For $\rho\in\cR^\tensor,\mu\in\cM^\tensor$ we use the shorthands $\rho_{++} = \rho_{+1, +1}$ and $\mu_{++++} = \mu_{+1, +1, +1, +1},$ and similarly for the other possible sign patterns.
Further, let
\begin{align}\label{eqLemma_Philipp0}
\cH(\mu)&=2\bc{\mu_{++++}+\mu_{----}+\mu_{+-+-}+\mu_{-+-+}}+\\
&\qquad\mu_{+++-}+\mu_{++-+}+\mu_{+-++}+\mu_{-+++}+\mu_{---+}+\mu_{--+-}+\mu_{-+--}+\mu_{+---}.
\end{align}
Finally, for $\mu\in\cM^\tensor$ we define $\rho(\mu)\in\cR^\tensor$ by
\begin{align*}
	\rho_{ij}(\mu)&=\sum_{k,l \in \cbc{\pm 1}} \mu_{ijkl}\qquad\mbox{for }i,j\in\cbc{\pm 1}.
\end{align*}

We now claim that for any $\mu\in\cM^\tensor$,
\begin{align}
	\sum_{\sigma,\sigma'\in\{\pm1\}^{V_n}}\pr\brk{\vec\mu(\sigma,\sigma')=\mu}&=\frac{\cX_{\mu}\cY_{\mu}\cZ_{\mu}}{(dn-1)!!}\qquad\mbox{where}\label{eqLemma_Philipp1}\\
	\cX_{\mu} &= \binom{n}{\rho_{++}(\mu)n, \rho_{+-}(\mu)n, \rho_{-+}(\mu)n, \rho_{--}(\mu)n},\nonumber\\
	\cY_{\mu} &= \prod_{i,j \in \cbc{\pm}} \binom{dn\rho_{ij}(\mu)}{dn\mu_{ij++}, dn\mu_{ij+-}, dn\mu_{ij-+}, dn\mu_{ij--}} ,\nonumber\\
 \cZ_{ \mu} &= \bc{dn\mu_{++--}}! \bc{dn\mu_{-++-}}! \prod_{i \in \cbc{\pm}} \bc{ \bc{dn\mu_{+i-i}}! \bc{dn\mu_{i+i-}}!} \prod_{i,j \in \cbc{\pm}} \bc{dn \mu_{ijij}-1}!!.\nonumber
\end{align}
Indeed, the first factor $\cX_\mu$ counts all pairs $(\sigma,\sigma')$ with $\rho(\sigma,\sigma')=\rho(\mu)$.
Moreover, $\cY_\mu$ accounts for the number of ways of selecting among clones of vertices with a given spin combination those that will be matched to vertices with another specific sign combination.
Finally, $\cZ_{\mu}$ equals the number of possible matchings of clones in accordance with their designations.

Combining \eqref{eqLemma_Philipp0} and \eqref{eqLemma_Philipp1}, we obtain
\begin{align}
	\sum_{\sigma,\sigma'\in\{\pm1\}^{V_n}}&\vecone\cbc{|\sigma\cdot\vecone|,|\sigma'\cdot\vecone|,|\sigma\cdot\sigma'|\leq\delta n}\ex[\exp(-\b\cH_{\G}(\sigma)-\b\cH_{\G}(\sigma'))]\nonumber\\
										  &=\sum_{\mu\in\cM_n}\sum_{\sigma,\sigma'\in\{\pm1\}^{V_n}}\pr\brk{\vec\mu(\sigma,\sigma')=\mu}\exp(-\b\cH(\mu))\nonumber\\
										  &\leq\abs{\cM_n}\max_{\mu\in\cM_n}\sum_{\sigma,\sigma'\in\{\pm1\}^{V_n}}\pr\brk{\vec\mu(\sigma,\sigma')=\mu}\exp(-\b\cH(\mu))\nonumber\\
										  &\leq\exp(O(\log n))\max_{\mu\in\cM_n}\frac{\cX_{\mu}\cY_{\mu}\cZ_{\mu}}{(dn-1)!!}\exp(-\b\cH(\mu)),\label{eqLemma_Philipp2}
\end{align}
because, naturally, $|\cM_n|\leq n^{16}$.
Further, Stirling's formula \eqref{eqStirling} and the explicit formula \eqref{eqdouble} for the double factorial yield the approximations
\begin{align*}
	\log \cX_{\rho} &= n H(\rho(\mu)) + O \bc{\log n},& \log \cY_{\rho, \mu} &= dn (H(\mu)-H(\rho))+ O \bc{\log n},& \log \frac{\cZ_{\rho, \mu}}{(dn-1)!!}&=- \frac{dn}{2} H(\mu)+ O(\log n).
\end{align*}
Combining these formulas and recalling the the Kullback-Leibler divergence, we obtain
\begin{align}\label{eqLemma_Philipp3}
	\max_{\mu\in\cM^\tensor}	\frac{\cX_{\mu}\cY_{\mu}\cZ_{\mu}}{(dn-1)!!}\eul^{-\b\cH(\mu)}&=
	\exp\bc{n\max_{\mu\in\cM^\tensor}\cbc{H(\rho(\mu))-d\KL{\mu}{\rho(\mu)\tensor\rho(\mu)}/2-d\b\cH(\mu)/2}+O(\log n)}.
\end{align}

To simplify the last optimisation problem we reparametrise the exponent in terms of $\alpha\in[-1,1]$.
Combinatorially the optimal choice of $\alpha$ will correspond to the overlap value $\sigma\cdot\sigma'/n$ that renders the largest contribution to the l.h.s.\ of \eqref{eqLemma_Philipp}.
Hence, let $\cM^\tensor(\alpha)$ be the set of all $\mu\in\cM^\tensor$ such that 
\begin{align}\label{eqLemma_Philipp5}
	\rho_{++}(\mu)=\rho_{--}(\mu)&=\frac{1+\alpha}4,&\rho_{+-}(\mu)=\rho_{-+}(\mu)&=\frac{1-\alpha}4.
\end{align}
Then we claim that for any $\alpha\in(-1,1)$,
\begin{align}\label{eqLemma_Philipp4}
	\max_{\mu\in\cM^\tensor(\alpha)}H(\rho(\mu))-d\KL{\mu}{\rho(\mu)\tensor\rho(\mu)}/2-d\b\cH(\mu)/2&\leq f_d(\alpha,\b).
\end{align}
To see this, we first notice that the constrained optimization problem on the l.h.s of \ref{eqLemma_Philipp4} is upper bounded by the result of the unconstrained optimization problem, i.e.
\begin{align}
    &\max_{\mu\in\cM^\tensor(\alpha)}H(\rho(\mu))-d\KL{\mu}{\rho(\mu)\tensor\rho(\mu)}/2-d\b\cH(\mu)/2 \notag \\ &\qquad \qquad \qquad \leq \max_{\mu\in M(\alpha)}H(\rho(\mu))-d\KL{\mu}{\rho(\mu)\tensor\rho(\mu)}/2-d\b\cH(\mu)/2 \label{eqLemma_Philipp4_1} 
\end{align}
where $M(\alpha)$ is the set of all probability measures on $\cP_\alpha(\cbc{\pm 1}^4)$ parametrised by $\alpha$.
Moreover, we notice that the function $\mu\in M(\alpha)\mapsto H(\rho(\mu))-d\KL{\mu}{\rho(\mu)\tensor\rho(\mu)}/2-d\b\cH(\mu)/2$ is concave because $H(\rho(\mu))$ is constant on $M(\alpha)$, the Kullback-Leibler divergence is strictly convex and the function $\cH(\mu)$ is linear.
Hence, it suffices to find the (unique) zero of the derivative of $\KL{\mu}{\rho(\mu)\tensor\rho(\mu)}/2+\b\cH(\mu)/2$ subject to \eqref{eqLemma_Philipp5}.

Letting $z_\a=(1+\ebb)(1+\a^2)/4+\eb(1-\a^2)/2$, we claim that $\mu_\a$ given by
\begin{align}
	\mu_{\a,++++}&=\mu_{\a,----}=\frac{(1+\a)^2}{16z_{\a}}\ebb,\qquad
	\mu_{\a,+-+-}=\mu_{\a,-+-+}=\frac{(1-\a)^2}{16z_{\a}}\ebb,\label{eqLemma_Philipp6}\\
\mu_{\a,+++-}&=\mu_{\a,++-+}=\mu_{\a,+-++}=\mu_{\a,-+++}=
\mu_{\a,---+}=\mu_{\a,--+-}=\mu_{\a,-+--}=\mu_{\a,+---}=\frac{1-\a^2}{16z_{\a}}\eb,\label{eqLemma_Philipp7}\\
\mu_{\a,++--}&=\mu_{\a,--++}=\frac{(1+\a)^2}{16z_{\a}},\qquad
\mu_{\a,+--+}=\mu_{\a,-++-}=\frac{(1-\a)^2}{16z_{\a}},\label{eqLemma_Philipp8}
\end{align}
fits the bill.
Indeed, we calculate
\begin{align}\label{eqLemma_Philipp9}
\mu_{\a,++++}\log\frac{\mu_{\a,++++}}{\rho_{\a,++}\rho_{\a,++}}
&=\mu_{\a,++++}\log\frac{\ebb}{z_\a}=-\mu_{\a,++++}(\log(z_\a)+2\b).
\end{align}
The same formula holds with $++++$ replaced by any of the other sign patterns from \eqref{eqLemma_Philipp6}, i.e., $----$, $+-+-$, $-+-+$.
Moreover,
\begin{align}\label{eqLemma_Philipp10}
\mu_{\a,+++-}\log\frac{\mu_{\a,+++-}}{\rho_{\a,++}\rho_{\a,+-}}
&=\mu_{\a,+++-}\log\frac{\eb}{z_\a}=-\mu_{\a,+++-}(\log(z_\a)+\b),
\end{align}
and similarly for the other seven patterns from \eqref{eqLemma_Philipp7}.
Further,
\begin{align}\label{eqLemma_Philipp11}
\mu_{\a,++--}\log\frac{\mu_{\a,++--}}{\rho_{\a,++}\rho_{\a,--}}&=-\mu_{\a,++--}\log z_\a
\end{align}
and analogously for the other sign patterns from \eqref{eqLemma_Philipp8}.
Consequently, the derivatives work out to be
\begin{align}
	\pd{\mu_{\alpha, ++++}}\bc{\KL{\mu}{\rho(\mu)\tensor\rho(\mu)}+\b\cH(\mu)}\bigg|_{\mu_\a} &=1+\log\frac{\mu_{\a,++++}}{\rho_{++}(\mu_{\a})^2}+2\b=1-\log(z_\a),\label{eqLemma_Philipp12}\\
	\pd{\mu_{\alpha, +++-}}\bc{\KL{\mu}{\rho(\mu)\tensor\rho(\mu)}+\b\cH(\mu)}\bigg|_{\mu_\a} &=1+\log\frac{\mu_{\a,+++-}}{\rho_{++}(\mu_{\a}) \rho_{+-}(\mu_{\a})}+\b=1-\log(z_\a),\label{eqLemma_Philipp13}\\
	\pd{\mu_{\alpha, ++--}}\bc{\KL{\mu}{\rho(\mu)\tensor\rho(\mu)}+\b\cH(\mu)}\bigg|_{\mu_\a} &=1+\log\frac{\mu_{\a,++--}}{\rho_{++}(\mu_{\a}) \rho_{--}(\mu_{\a})}=1-\log(z_\a)\label{eqLemma_Philipp14}.
\end{align}
In each case the same calculation applies to the other sign patterns from the respective line \eqref{eqLemma_Philipp6}--\eqref{eqLemma_Philipp8}.
Since the right hand sides of \eqref{eqLemma_Philipp12}--\eqref{eqLemma_Philipp14} are identical, the constraint that $\mu$ belongs to the simplex $\cP(\{\pm1\}^4)$ shows that $\mu_\a$ is a stationary point and therefore the unique maximiser of $\KL{\mu}{\rho(\mu)\tensor\rho(\mu)}/2+\b\cH(\mu)/2$.
Moreover, combining \eqref{eqLemma_Philipp9}--\eqref{eqLemma_Philipp11}, we find
\begin{align}\label{eqLemma_Philipp_comb}
-\KLmrra-\b\cH(\mu_\a)&=
\log(z_\a)=\log\bc{1+\a^2\bcfr{1-\eb}{1+\eb}^2}+2\log\frac{1+\eb}{2},
\end{align}
Thus, \eqref{eqLemma_Philipp4} follows from the definition \eqref{eqTech5a} of $f_d(\a,\b)$ and \eqref{eqLemma_Philipp_comb}.

To complete the proof consider any $\mu\in M(\alpha)$.
Then \eqref{eqLemma_Philipp00} ensures that there exists $\mu'\in M(0)$ such that $\|\mu-\mu'\|_2=O(\delta)$.
Consequently, since the function $\mu\mapsto H(\rho(\mu))-d\KL{\mu}{\rho(\mu)\tensor\rho(\mu)}/2-d\b\cH(\mu)/2$ is differentiable, the bound \eqref{eqLemma_Philipp4} shows that
\begin{align}\label{eqLemma_Philipp90}
	\max_{\mu\in M(\alpha)}H(\rho(\mu))-d\KL{\mu}{\rho(\mu)\tensor\rho(\mu)}/2-d\b\cH(\mu)/2&\leq f_d(0,\b)+O(\delta).
\end{align}
Thus, the assertion follows from \eqref{eqLemma_Philipp2} and \eqref{eqLemma_Philipp90}.
\end{proof}

\begin{corollary}\label{Cor_Philipp}
For any $d\geq3,\b>0$ we have $$\ex\brk{Z_{\G(n,d),\b}^2\vecone\cbc{\cO}}\leq \ex\brk{Z_{\G(n,d),\b}}^2\exp(o(n)).$$
\end{corollary}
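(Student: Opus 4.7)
The strategy is to combine a $\G$-analogue of Corollary~\ref{Lemma_sm_trunc} with Lemma~\ref{Lemma_Philipp} and then identify the resulting exponent with $2\log\ex[Z_{\G,\beta}] + o(n)$.

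First I would establish, by essentially the same argument that proves Corollary~\ref{Lemma_sm_trunc}, that there exists a sequence $\delta = \delta_n = o(1)$ with
\begin{align*}
\ex\brk{Z_{\G,\beta}^2\,\vecone\cbc{\cO}} \leq (1+o(1)) \sum_{\sigma,\sigma' \in \{\pm 1\}^{V_n}} \vecone\cbc{|\sigma\cdot\vecone|,\,|\sigma'\cdot\vecone|,\,|\sigma\cdot\sigma'| \leq \delta n}\, \ex\brk{\exp(-\beta\cH_\G(\sigma) - \beta\cH_\G(\sigma'))}.
\end{align*}
The justification is that on $\cO$, Markov's inequality converts the definition $\ex[|\vec\sigma_\G \cdot \vec\sigma'_\G| \mid \G] < \eps_n n$ into a bound of the form $\pr[|\vec\sigma_\G \cdot \vec\sigma'_\G| > \delta n \mid \G, \cO] = o(1)$ with $\delta = \sqrt{\eps_n}$. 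Expanding $Z_{\G,\beta}^2 = \sum_{\sigma,\sigma'} \exp(-\beta\cH_\G(\sigma) - \beta\cH_\G(\sigma'))$, the Boltzmann weight appears naturally, so pairs with $|\sigma\cdot\sigma'| > \delta n$ contribute only a vanishing fraction to $Z_{\G,\beta}^2 \vecone\cbc{\cO}$. Fact~\ref{Fact_perp}, applied quantitatively, then yields the individual magnetization constraints after a second Markov step.

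Next, Lemma~\ref{Lemma_Philipp} immediately gives $\ex\brk{Z_{\G,\beta}^2 \vecone\cbc{\cO}} \leq \exp(n f_d(0,\beta) + O(\delta n))$. From~\eqref{eqTech5a} a direct computation yields $f_d(0,\beta) = (2-d)\log 2 + d\log(1+\eul^{-\beta}) = 2\log 2 + d\log((1+\eul^{-\beta})/2)$, while Lemma~\ref{Lemma_mean} supplies $\ex[Z_{\G,\beta}] = \Theta\bc{2^n ((1+\eul^{-\beta})/2)^{dn/2}}$. Hence $n f_d(0,\beta) = 2\log\ex[Z_{\G,\beta}] + O(1)$, and since $\delta = o(1)$ this completes the argument.

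The main obstacle is the first step: cleanly turning the event $\cO$, a statement about an expectation over two Boltzmann samples \emph{given} the random graph, into a deterministic restriction of the summation range in $Z_{\G,\beta}^2$. The workaround is a two-fold Markov sandwich combined with Fact~\ref{Fact_perp}, as hinted at by the brevity of the proof of Corollary~\ref{Lemma_sm_trunc}. Once this passage is set up, the remainder is a matter of invoking already-proved estimates.
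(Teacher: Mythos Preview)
Your proposal is correct and follows the paper's approach essentially verbatim. The paper's proof is a one-liner citing \Cor~\ref{Lemma_sm_trunc} and \Lem~\ref{Lemma_Philipp}; you have simply unpacked the two ingredients that the paper leaves implicit, namely that \Cor~\ref{Lemma_sm_trunc} holds equally for the pairing model $\G$ (its proof via Fact~\ref{Fact_perp} and Markov's inequality is model-agnostic), and that $\exp(nf_d(0,\beta))=\ex[Z_{\G,\beta}]^2\cdot\Theta(1)$ by \eqref{eqTech5_2} and \Lem~\ref{Lemma_mean}.
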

\begin{proof}
This is an immediate consequence of \Cor~\ref{Lemma_sm_trunc} and \Lem~\ref{Lemma_Philipp}.
\end{proof}

\begin{proof}[Proof of \Lem~\ref{Lemma_O2}]
The lemma follows from \Lem~\ref{Lemma_mean}, \Lem~\ref{Cor_simple*} and \Cor~\ref{Cor_Philipp}.
\end{proof}

\begin{proof}[Proof of \Prop~\ref{prop_recon_energy}]
The proposition follows from
\Lem~\ref{Cor_simple*},
\Lem~\ref{Lemma_hunch},
\Lem~\ref{cor_reconstruction},
\Lem~\ref{Lemma_mean},
and \Cor~\ref{Cor_Philipp}.
\end{proof}

\section{Proof of \Prop~\ref{prop_Taylor}} \label{sec_thm_2}

\noindent
We begin by deriving an explicit formula for $\cB(\pi_\eps^*, \beta)$.
Recall that $\Lambda(x)=x\log x$.

\begin{lemma}\label{Lemma_BIsing}
	Let $d\geq3,\b>0$.
	Then for small enough $\eps>0$ we have
	\begin{align*}
		\cB_{\text{Ising}}(\pi_\eps^*, \beta, d) &= \frac{\sum_{i=1}^d \binom{d}{i} 2^{-d} \Lambda\bc{\sum_{\sigma =\pm 1}\bc{1-\bc{1-\eb}\bc{\frac 12 + \sigma \eps}}^i \bc{1-\bc{1-\eb}\bc{\frac 12 - \sigma \eps}}^{d-i}}}{2 \bc{(1+\eb)/2}^d} \\
												 & \qquad \qquad - \frac{d \bc{\Lambda \bc{1-\bc{1-\eb}\bc{\frac 12 +2 \eps^2}} + \Lambda \bc{1-\bc{1-\eb} \bc{\frac 12 - 2 \eps^2} }}}{2(1+\eb)}
	\end{align*}
\end{lemma}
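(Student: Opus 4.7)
The plan is to substitute $\pi=\pi_\eps^*$ directly into the definition \eqref{eq_Bethe_exp} of $\cB_{\text{Ising}}$, exploiting the fact that $\pi_\eps^*$ is supported on only two atoms $\pm 2\eps$, so the expectations collapse to finite sums. No analytic estimation is needed; the lemma is essentially a bookkeeping identity.

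For the second (subtracted) summand, the only relevant random variable is the product $\MU_{\pi_\eps^*,1}\MU_{\pi_\eps^*,2}$ of two independent $\pm 2\eps$-valued variables. It equals $+4\eps^2$ with probability $1/2$ and $-4\eps^2$ with probability $1/2$, so $(1+\MU_{\pi_\eps^*,1}\MU_{\pi_\eps^*,2})/2$ equals $1/2\pm 2\eps^2$ with equal probability. Taking expectations inside $\Lambda$ produces exactly the two-term average appearing in the statement.

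For the first summand I would condition on the random count $I=|\{\,j\le d:\MU_{\pi_\eps^*,j}=+2\eps\,\}|$, which is $\mathrm{Bin}(d,1/2)$ and hence takes value $i$ with weight $\binom{d}{i}2^{-d}$. Given $I=i$, for each $\sigma\in\{\pm 1\}$ the factor $1-(1-\eb)(1+\sigma\MU_{\pi_\eps^*,j})/2$ equals $1-(1-\eb)(1/2+\sigma\eps)$ for the $i$ indices with $\MU_{\pi_\eps^*,j}=+2\eps$ and equals $1-(1-\eb)(1/2-\sigma\eps)$ for the remaining $d-i$ indices. Thus the $d$-fold product factorises cleanly, and summing over $\sigma=\pm 1$ yields precisely the bracketed expression inside $\Lambda$ in the lemma. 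Applying $\Lambda$, averaging against the binomial weights, and dividing by the normalising denominator $2^{1-d}(1+\eb)^d$ gives the claimed first summand.

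There is no real obstacle. The only small care required is to respect the symmetry $\sigma\leftrightarrow -\sigma$, which under the binomial distribution corresponds to $i\leftrightarrow d-i$; this symmetry is exactly what allows the two $\sigma$-terms to be combined under a single $\binom{d}{i}2^{-d}$ coefficient. The restriction to small $\eps>0$ is only used implicitly to guarantee $\pi_\eps^*\in\cP_*([-1,1])$, which is immediate from its construction as a symmetric two-point measure.
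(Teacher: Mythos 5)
Your argument is correct and coincides with the paper's own proof, which likewise simply substitutes the two-atom measure \eqref{eq_distribution} into the Bethe functional \eqref{eq_Bethe_exp}, observes that the number of neighbours polarised to $+2\eps$ is $\Bin(d,1/2)$ so the first expectation becomes the binomial average of the factorised $d$-fold product, and that $\MU_{\pi,1}\MU_{\pi,2}=\pm4\eps^2$ with equal probability for the edge term. One pedantic remark: your (correct) binomial average runs over $i=0,\dots,d$, whereas the lemma as printed starts the sum at $i=1$; this appears to be a typo in the statement (note the $i=0$ and $i=d$ terms agree by the $\sigma\leftrightarrow-\sigma$ symmetry you mention) rather than a gap in your argument.
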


\begin{proof}
	The expression follows straight from plugging the distribution $\pi_\eps^*$ from \eqref{eq_distribution} into the Bethe functional from \eqref{eq_Bethe_exp}. Let us shed light on its combinatorial meaning. The first term represents the 'weighted penalty factor' arising at a root vertex with $d$ adjacent vertices. Since we polarise each of these adjacent vertices with probability $1/2$ independently, the number of adjacent vertices polarised to $\eps$ and $-\eps$ follows a binomial distribution. The term $\binom{d}{i} 2^{-d}$ captures the corresponding probability while the term inside $\Lambda(\cdot)$ describes the resulting penalty factor over all adjacent vertices summed over the $+1$ and $-1$ spins at the root vertex. The second term represents the 'weighted penalty factor' between two vertices connected via an edge. Here, the first corresponds to the case that both vertices are polarised to the same spin, while the second summand picks up the penalty factor for polarisation towards different spins.
\end{proof}

\begin{lemma}\label{Lemma_Btaylor}
	Let $d\geq3,\b>0$.
	Then as $\eps\to0$,
\begin{align*}
		\cB_{\text{Ising}}(\pi_\eps^*,\beta,d) &= \log 2+\frac{d}{2}\log\frac{1+\eul^{-\b}}{2}+
		\frac{4\eps^4 d \eul^{-2\b} \bc{\eul^\beta -1}^2 \bc{\eul^{2\b} (d-2) - 2d \eul^\beta + d-2}}{(1+\eul^\beta)^2(1+\eul^{-\beta})^2}+O(\eps^5).
	\end{align*}
\end{lemma}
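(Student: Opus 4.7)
The plan is to Taylor expand the explicit formula from \Lem~\ref{Lemma_BIsing} around $\eps = 0$. Because $\pi_\eps^*$ depends on $\eps$ only through $|\eps|$, the Bethe functional is an even function of $\eps$; its expansion takes the form $B_0 + B_2 \eps^2 + B_4 \eps^4 + O(\eps^6)$, and we need only identify $B_0$, show $B_2 = 0$, and compute $B_4$.

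Set $c = (1+\eb)/2$ and $a = 1 - \eb$. The inner argument of $\Lambda$ in the first summand of \Lem~\ref{Lemma_BIsing} equals
\[
    P_i(\eps) = (c - a\eps)^i(c + a\eps)^{d-i} + (c + a\eps)^i(c - a\eps)^{d-i} = 2c^d\bigl(1 + r_i(\eps)\bigr),
\]
where $r_i(\eps) = (a/c)^2 q_{i,2}\,\eps^2 + (a/c)^4 q_{i,4}\,\eps^4 + O(\eps^6)$ and $q_{i,2m}$ is the coefficient of $x^{2m}$ in the even polynomial $(1+x)^i(1-x)^{d-i} + (1-x)^i(1+x)^{d-i}$; a brief computation gives $q_{i,2} = \tfrac12[(2i-d)^2 - d]$. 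Writing $\Lambda(P_i) = 2c^d(1+r_i)[\log(2c^d) + \log(1+r_i)]$ and normalising by $2^{1-d}(1+\eb)^d = 2c^d$, the first summand of \Lem~\ref{Lemma_BIsing} expands to
\[
    \log(2c^d) + \bigl[\log(2c^d) + 1\bigr]\,\langle r_i\rangle + \tfrac12\,\langle r_i^2\rangle + O(\eps^6),
\]
where $\langle\cdot\rangle$ denotes the average against the binomial weights $\binom{d}{i} 2^{-d}$ on $i \in \{0,\ldots,d\}$. The key identity $\sum_{i=0}^d \binom{d}{i} 2^{-d}(1+x)^i(1-x)^{d-i} = 1$ forces $\langle q_{i,2m}\rangle = 0$ for every $m \geq 1$, and hence $\langle r_i\rangle \equiv 0$. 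Analogously, the symmetric expansion $\Lambda(c+t) + \Lambda(c-t) = 2\Lambda(c) + t^2/c + O(t^4)$ with $t = 2a\eps^2$ shows that the second summand of \Lem~\ref{Lemma_BIsing} contributes $-\tfrac{d}{2}\log c - d(a^2/c^2)\eps^4 + O(\eps^8)$.

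Assembling, $B_0 = \log 2 + \tfrac{d}{2}\log c$, $B_2 = 0$, and $B_4 = \tfrac12 (a/c)^4\,\langle q_{i,2}^2\rangle - d(a^2/c^2)$. The binomial moments $\langle (2i-d)^2\rangle = d$ and $\langle(2i-d)^4\rangle = 3d^2 - 2d$ yield $\langle q_{i,2}^2\rangle = \tfrac12 d(d-1)$, whence
\[
    B_4 = \frac{4d\,a^2\bigl[(d-1)a^2 - (1+\eb)^2\bigr]}{(1+\eb)^4}.
\]
The elementary identity $(d-1)(1-\eb)^2 - (1+\eb)^2 = (d-2) - 2d\eb + (d-2)\eb^2$ together with $(1+\eul^\beta)^2(1+\eul^{-\beta})^2 = \eul^{2\beta}(1+\eb)^4$ converts this into the announced closed form. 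The principal obstacle is the bookkeeping required to combine the fourth-order contributions from both summands of \Lem~\ref{Lemma_BIsing} and to verify the two cancellations $\langle r_i\rangle \equiv 0$ and the odd-$t$ vanishing in $\Lambda(c+t) + \Lambda(c-t)$; once these are in place, the rest reduces to elementary binomial-moment arithmetic.
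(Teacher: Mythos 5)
Your proof is correct and arrives at exactly the stated quartic coefficient, but by a genuinely different route than the paper. The paper's proof of \Lem~\ref{Lemma_Btaylor} is short: it observes that $\eps\mapsto\cB_{\text{Ising}}(\pi_\eps^*,\beta,d)$ has five continuous derivatives, applies Taylor's formula, and evaluates the first four derivatives at $\eps=0$ symbolically with a computer algebra system, finding that the first three vanish and reading off the fourth. You instead compute the expansion by hand: with $c=(1+\eb)/2$, $a=1-\eb$ you normalise the inner argument as $2c^d(1+r_i)$, kill the second-order contribution via the exact identity $\sum_{i=0}^d\binom{d}{i}2^{-d}(1+x)^i(1-x)^{d-i}=1$ (so $\langle r_i\rangle\equiv0$), and reduce the quartic term to the Rademacher moments $\langle(2i-d)^2\rangle=d$, $\langle(2i-d)^4\rangle=3d^2-2d$ together with the elementary expansion of $\Lambda(c+t)+\Lambda(c-t)$ for the edge term; I verified that your $B_4=\frac{d(d-1)a^4}{4c^4}-\frac{da^2}{c^2}$ coincides with the displayed coefficient after the rewriting $(1+\eul^{\b})^2(1+\eul^{-\b})^2=\eul^{2\b}(1+\eb)^4$. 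Your route buys a self-contained, human-verifiable derivation (and in fact an $O(\eps^6)$ error term, slightly stronger than the stated $O(\eps^5)$), at the price of the binomial bookkeeping; the paper's route is shorter but cannot be checked without symbolic software. Two harmless slips to fix in writing: your $q_{i,2m}$ as literally defined (coefficient of $x^{2m}$ in the sum, whose constant term is $2$) is twice the quantity you actually use, since $q_{i,2}=\frac12[(2i-d)^2-d]$ is the coefficient of the symmetrised average $\frac12[(1+x)^i(1-x)^{d-i}+(1-x)^i(1+x)^{d-i}]$ — the two factors of $2$ cancel and all subsequent numbers, in particular $\langle q_{i,2}^2\rangle=\frac12d(d-1)$, are correct; and your average runs over $i\in\{0,\dots,d\}$, which is the correct reading of the Bethe functional (the sum starting at $i=1$ in \Lem~\ref{Lemma_BIsing} is evidently a typo, as otherwise even the constant term $\log 2+d\log c$ would fail).
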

\begin{proof}
	\Lem~\ref{Lemma_BIsing} shows that the function $\eps\mapsto\cB_{\text{Ising}}(\pi_\eps^*,\beta,d)$ has five continuous derivatives in for small enough $\eps>0$.
Hence, Taylor's formula yields
\begin{align}\nonumber
    \cB_{\text{Ising}}(\pi_\eps^*, \beta, d) = &\cB_{\text{Ising}}(\pi_0^*, \beta) + \eps \frac{\partial}{\partial \eps} \cB_{\text{Ising}}(\pi_\eps^*, \beta, d) \vert_{\eps=0} + \frac{\eps^2}{2} \frac{\partial^2}{\partial \eps^2} \cB_{\text{Ising}}(\pi_\eps^*, \beta, d) \vert_{\eps=0} \\
    &\qquad \qquad + \frac{\eps^3}{6} \frac{\partial^3}{\partial \eps^3} \cB_{\text{Ising}}(\pi_\eps^*, \beta, d) \vert_{\eps=0} + \frac{\eps^4}{24} \frac{\partial^4}{\partial \eps^4} \cB_{\text{Ising}}(\pi_\eps^*, \beta, d) \vert_{\eps=0} + O \bc{\eps^5}.\label{eqLemma_Btaylor1}
\end{align}
The formula for $\cB_{\text{Ising}}(\pi_\eps^*,\beta,d)$ from \Lem~\ref{Lemma_BIsing} is complicated but explicit.
Therefore, we can rely on a computer algebra system to calculate the first four derivatives of $\cB_{\text{Ising}}(\pi_\eps^*,\beta,d)$ symbolically at $\eps=0$.
The result of this calculation reads
\begin{align}\label{eqLemma_Btaylor2}
   \frac{\partial}{\partial \eps} \cB_{\text{Ising}}(\pi^*, \beta, d) \vert_{\eps=0} &= 
   \frac{\partial^2}{\partial \eps^2} \cB_{\text{Ising}}(\pi^*, \beta, d) \vert_{\eps=0} =
   \frac{\partial^3}{\partial \eps^3} \cB_{\text{Ising}}(\pi^*, \beta, d) \vert_{\eps=0} = 0, \\
   \frac{\partial^4}{\partial \eps^4} \cB_{\text{Ising}}(\pi^*, \beta, d) \vert_{\eps=0} &= \frac{96 d e^{-2\b} \bc{e^\beta -1}^2 \bc{e^{2\b} (d-2) - 2d e^\beta + d-2}}{(1+e^\beta)^2(1+e^{-\beta})^2}
   \label{eqLemma_Btaylor3}
\end{align}
Plugging \eqref{eqLemma_Btaylor2}--\eqref{eqLemma_Btaylor3} into \eqref{eqLemma_Btaylor1} yields the desired formula.
\end{proof}

\begin{proof}[Proof of \Prop~\ref{prop_Taylor}]
	Let $d\geq3$.
	A few lines of algebra reveal that $\eul^{2\b} (d-2) - 2d \eul^\beta + d-2>0$ if $\b>\b^*(d)$.
	Therefore, \Lem~\ref{Lemma_Btaylor} shows that for small enough $\eps>0$ and $\b>\b^*(d)$ we have $\cB_{\text{Ising}}(\pi_\eps^*,\beta,d) > \log 2+\frac{d}{2}\log\frac{1+\eul^{-\b}}{2}$, as claimed.
\end{proof}

\section{Proof of \Prop~\ref{Prop_Explicit_Yprime}} \label{sec_cor}
\noindent
We treat $X_1,X_2$ from \Lem~\ref{Prop_PD} separately.
Let us begin with $X_2$, which is the easier case.
In the following, fix any $d\geq3$, $\alpha\in(0,1/2)$ and $z\in(0,1)$ and set $y=y(\b)=-\b^{-1}\log z$.

\begin{lemma}\label{Lem_Explicit_Yprime}
	We have $ \limb \log \Erw[X_2^{y}] = \log \bc{1-2\a^2+2\a^2z}. $
\end{lemma}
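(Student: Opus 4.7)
The plan is a direct computation, exploiting that $X_2$ depends on only two independent samples from the finite-support distribution $\fr_\alpha$. I would start by simplifying the inner sum: since $\rho_i(\tau)=(1+\tau \vec r_i)/2$, expanding gives
\begin{align*}
\sum_{\tau\in\{\pm1\}}\RHO_1(\tau)\RHO_2(\tau)&=\frac{1+\vec r_1\vec r_2}{2},
\end{align*}
so $X_2=1-(1-\eul^{-\b})(1+\vec r_1\vec r_2)/2$. Because $\vec r_1,\vec r_2$ are i.i.d.\ samples from $\fr_\a=\a\delta_{-1}+(1-2\a)\delta_0+\a\delta_1$, the product $\vec r_1\vec r_2$ takes the value $+1$ with probability $2\a^2$, the value $-1$ with probability $2\a^2$, and the value $0$ with probability $1-4\a^2$.

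Next, I would read off the three possible values of $X_2$ on these cases: $X_2=\eul^{-\b}$ when $\vec r_1\vec r_2=1$, $X_2=1$ when $\vec r_1\vec r_2=-1$, and $X_2=(1+\eul^{-\b})/2$ when $\vec r_1\vec r_2=0$. Hence
\begin{align*}
\Erw[X_2^y]&=2\a^2\eul^{-\b y}+2\a^2+(1-4\a^2)\bcfr{1+\eul^{-\b}}{2}^{y}.
\end{align*}

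Now I would substitute $y=-\b^{-1}\log z$, which is designed precisely so that $\eul^{-\b y}=z$ identically in $\b$. The first term therefore equals $2\a^2 z$ for every $\b$, and the second equals $2\a^2$. For the third, as $\b\to\infty$ we have $y=-\b^{-1}\log z\to 0$ while $\log((1+\eul^{-\b})/2)\to-\log 2$, so $y\log((1+\eul^{-\b})/2)\to 0$ and thus $((1+\eul^{-\b})/2)^y\to 1$. Combining,
\begin{align*}
\limb\Erw[X_2^y]&=2\a^2 z+2\a^2+(1-4\a^2)=1-2\a^2+2\a^2 z,
\end{align*}
and taking logarithms yields the lemma. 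There is no genuine obstacle here; the only care needed is verifying the limit of the third term, which is immediate from continuity of $t\mapsto\eul^t$.
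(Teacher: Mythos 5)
Your proposal is correct and follows essentially the same route as the paper: the same case analysis on the product $\vec r_1\vec r_2\in\{-1,0,1\}$ with probabilities $2\a^2$, $1-4\a^2$, $2\a^2$, the same closed form $\Erw[X_2^y]=2\a^2+2\a^2z+(1-4\a^2)\bc{(1+\eul^{-\b})/2}^y$, and the same observation that the last factor tends to $1$ because $y\to0$ as $\b\to\infty$. The only (harmless) addition is that you make explicit the identity $\sum_\tau\RHO_1(\tau)\RHO_2(\tau)=(1+\vec r_1\vec r_2)/2$, which the paper uses implicitly.
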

\begin{proof}
	Recalling the definition of $\fr_\alpha$ from \eqref{eqrhoalpha} and writing the expectation out explicitly, we obtain
\begin{align}
	\ex[X_2^y]=\ex\brk{\bc{1-(1-\eul^{-\b}) \sum_{\tau\in\{\pm 1\}} \RHO_{1}(\tau)\RHO_{2}(\tau)}^y}&=
	\sum_{r_1,r_2 \in \{0,1,-1\}} \fr_\a(r_{1})\fr_\a(r_{2}) \brk{ 1- \frac{(1-\eb)(1+r_1r_2)}2 }^y.
	\label{eqLem_Explicit_Yprime1}
\end{align}
To evaluate this expression we consider the possible values of the product $r_1r_2$.
\begin{description}
	\item[Case 1: $r_1r_2=-1$] by the choice \eqref{eqrhoalpha} of $\fr$ this event has probability $2\a^2$ and
		\begin{align}	\label{eqLem_Explicit_Yprime2}
			1- \frac{(1-\eb)(1+r_1r_2)}2 =1.
		\end{align}
	\item[Case 2: $r_1r_2=1$] in this case, which occurs with probability $2\a^2$ as well, we obtain
\begin{align}\label{eqLem_Explicit_Yprime3}
	1- \frac{(1-\eb)(1+r_1r_2)}2 =\eb.
		\end{align}
	\item[Case 3: $r_1r_2=0$] naturally this event occurs with the remaining probability $1-4\a^2$ and
\begin{align}\label{eqLem_Explicit_Yprime4}
	1- \frac{(1-\eb)(1+r_1r_2)}2 =\frac{1+\eb}{2}.
		\end{align}
\end{description}
Combining \eqref{eqLem_Explicit_Yprime1}--\eqref{eqLem_Explicit_Yprime4}, we obtain
\begin{align}
	\Erw\brk{\bc{1-(1-\eul^{-\b}) \sum_{\tau\in\{\pm 1\}} \RHO_{1}(\tau)\RHO_{2}(\tau)}^y}&= 2\a^2+2\a^2\eul^{-\b y}+(1-4\a^2)\bcfr{1+\eb}{2}^y.
	\label{eqLem_Explicit_Yprime5}
\end{align}
Finally, since $z=\exp(-\b y)$ and 
\begin{align*}
	\bcfr{1+\eb}{2}^y=\exp\bc{-\frac{\log z}{\b}\log(1+\eul^{-b})}\to1\qquad\mbox{as $\b\to\infty$,}
\end{align*}
combining \eqref{eqLem_Explicit_Yprime1} and \eqref{eqLem_Explicit_Yprime5} shows that $ \limb\log \ex[X_2^y]=\log(1-2\a^2+2\a^2z), $ as desired.
\end{proof}

The computation of $\Erw \brk{X_1^{y}}$ is a little more intricate.
Combinatorially speaking, the basic idea is this.
Consider the picture on the left of Figure~\ref{Fig_interpolation}.
The expression $$X_1=\sum_{\tau\in\{\pm 1\}} \prod_{h=1}^{d}1-(1-\eul^{-\b})\RHO_{h}(\tau)$$ represents the contribution to the partition function of a single white vertex along with its adjacent blue boxes.
Each of these boxed represents an imaginary vertex, or a `field' in physics jargon, that takes the spin $\tau$ with probability $\RHO_h(\tau)$.
The spins of these imaginary vertices are mutually independent.
Hence, the sum on $\tau$ in the definition of $X_1$ accounts for the two possible choices of spin for the white vertex.
Thus, if we let $\vH(\tau)$ be the number of imaginary vertices with spin $\tau$, then the product $\prod_{h=1}^{d}1-(1-\eul^{-\b})\RHO_{h}(\tau)$ equals the expected Boltzmann weight
\begin{align*}
	\ex[\exp(-\b\vH(\tau))\mid\RHO_1,\ldots,\RHO_d].
\end{align*}
Furthermore, the fields $\RHO_h(\tau)$ can be either `soft', i.e., $\RHO_h(\tau)=1/2$, or `hard', meaning $\RHO_h(\tau)\in\{0,1\}$.
As in the proof of \Lem~\ref{Lem_Explicit_Yprime} we will see that in the limit $\b\to\infty$ and $y\to0$ the soft fields are inconsequential.
In effect, the computation of $X_1$ will come down to studying the random variable $\sum_{\tau\in\{\pm1\}}\tau\vecone\{\RHO_h(\tau)=1\}$, which gauges the relative strength of the hard fields.
In other words, the calculation of $\Erw\brk{X_1^{y}}$ comes down to analysing a random walk.
Let us get down to the details.

\begin{lemma}\label{Lem_Explicit_Y}
We have $ \limb \log \Erw\brk{X_1^{y}} =  \log \bc{\zeta\cAda^d \xi}.  $
\end{lemma}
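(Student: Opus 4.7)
My plan is to first expand $X_1$ by classifying each sample $\vec r_h \in \{-1,0,1\}$ as a \emph{soft} field (if $\vec r_h = 0$, occurring with probability $1-2\a$) or a \emph{hard} $\pm$ field (if $\vec r_h = \pm 1$, each occurring with probability $\a$), and then recognise the limiting quantity as the expectation of a functional of a random walk reflected at the origin.

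Writing $\vec N_0,\vec N_+,\vec N_-$ for the numbers of soft, hard-$+$, and hard-$-$ fields respectively (so $(\vec N_+,\vec N_-,\vec N_0)$ is multinomial with parameters $d$ and $(\a,\a,1-2\a)$), a direct inspection of $\RHO_h(\pm 1)=(1\pm \vec r_h)/2$ gives
\begin{align*}
X_1 \;=\; \Big(\tfrac{1+\eb}{2}\Big)^{\vec N_0}\bigl(\eul^{-\b \vec N_+}+\eul^{-\b \vec N_-}\bigr).
\end{align*}
Since $y\b=-\log z$, we have $\bigl((1+\eb)/2\bigr)^{y}\to 1$, and because $(\eul^{-\b \vec N_+}+\eul^{-\b \vec N_-})^y$ is dominated by the larger of the two exponentials, it converges to $z^{\min(\vec N_+,\vec N_-)}$ as $\b\to\infty$. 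Since $X_1^y\in[0,2^y]$ with $2^y\to1$, bounded convergence yields
\begin{align*}
\lim_{\b\to\infty}\Erw[X_1^y]\;=\;\Erw\bigl[z^{\min(\vec N_+,\vec N_-)}\bigr].
\end{align*}

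It remains to identify the right-hand side with $\zeta \cAda^d \xi$. Expanding the binomial,
\begin{align*}
\zeta\cAda^d\xi \;=\; \sum_{k=0}^{d}\binom{d}{k}(1-2\a)^{d-k}(2\a)^{k}\,z^{k/2}\,\bigl(\zeta\cM^{k}\xi\bigr),
\end{align*}
and $\zeta\cM^{k}\xi=\sum_{j}(\cM^{k})_{0,j}z^{-j/2}=\Erw[z^{-\vec X_k/2}]$, where $(\vec X_k)_{k\ge 0}$ is the random walk on $\{0,1,\dots,d\}$ with transition matrix $\cM$ started at $0$. Conditioning on $\vec N_++\vec N_-=k$, the multiset of hard fields is a uniform $\pm1$ sample of length $k$; by the reflection identity the absolute value of a simple $\pm1$ walk on $\ZZ$ has the same law as $\vec X_k$, and as $k\le d$ the upper boundary is never reached, so $\vec X_k\stackrel{d}{=}|\vec N_+-\vec N_-|$ given $\vec N_++\vec N_-=k$. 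Combined with $\min(\vec N_+,\vec N_-)=(k-|\vec N_+-\vec N_-|)/2$, this yields
\begin{align*}
z^{k/2}\Erw[z^{-\vec X_k/2}]\;=\;\Erw\bigl[z^{\min(\vec N_+,\vec N_-)}\,\big|\,\vec N_++\vec N_-=k\bigr],
\end{align*}
so summing on $k$ identifies $\zeta\cAda^d\xi=\Erw[z^{\min(\vec N_+,\vec N_-)}]$.

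I expect the bounded-convergence step to be essentially routine given the explicit expression, but the main pleasant step is the random-walk identification: pinning down the correspondence between the rows/columns of $\cM$ (with its somewhat unusual reflecting entries $\cM_{0,1}=\cM_{d,d-1}=1$) and the distribution of $|\vec N_+-\vec N_-|$, and verifying that the reflection at $d$ never takes effect (because $\vec X_k\le k\le d$). Once the combinatorial dictionary is set up, the equality $\zeta\cAda^d\xi=\Erw[z^{\min(\vec N_+,\vec N_-)}]$ follows by reading off coefficients in the binomial expansion of $\cAda^d$.
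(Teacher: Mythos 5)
Your proposal is correct and follows essentially the same route as the paper: the same soft/hard field decomposition giving $X_1=\bc{(1+\eb)/2}^{\vec N_0}(\eul^{-\b\vec N_+}+\eul^{-\b\vec N_-})$, the same limit $\Erw[X_1^y]\to\Erw[z^{\vec N_+\wedge\vec N_-}]$, and the same reflected-random-walk reading of $\zeta\cAda^d\xi$ via the identity $\vec N_++\vec N_--|\vec N_+-\vec N_-|=2(\vec N_+\wedge\vec N_-)$. The only (cosmetic) difference is that you verify the matrix identity by binomially expanding $\cAda^d$ and invoking the reflection identity for $|\vec N_+-\vec N_-|$ conditioned on the number of hard fields, whereas the paper tracks the non-stationary steps with a formal variable $t$ in $\fA^d$ and then substitutes $t=\sqrt z$; your version is, if anything, slightly more explicit about why the $\cM$-chain matches $|\vec N_+-\vec N_-|$ and why the barrier at $d$ is never active.
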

\begin{proof}
	Letting
\begin{align*}
	\vR_{-1}& = \sum_{h=1}^{d} \vecone \cbc{\RHO_{h}(1)=0},&
	\vR_{0}& = \sum_{h=1}^{d} \vecone \cbc{\RHO_{h}(1)=1/2},&
	\vR_1 &= \sum_{h=1}^{d} \vecone \cbc{\RHO_{h}(1)=1}
\end{align*}
we can write the random walk as $\sum_{\tau\in\{\pm1\}}\tau\vecone\{\RHO_h(\tau)=1\}=\vR_1-\vR_{-1}$.
Hence,
\begin{align}\label{eqLem_Explicit_Y1}
     \prod_{h=1}^{d} \bc{ 1-(1-\eb ) \RHO_{h}(1) }^y   &= \exp(-\b y\vR_1)\bcfr{1+\eb }2^{y\vR_0},\\
    \prod_{h=1}^{d} \bc{ 1-(1-\eb ) \RHO_{h}(-1) }^y &= \exp(-\b y\vR_{-1})\bcfr{1+\eb}2^{y\vR_0}.
\label{eqLem_Explicit_Y2}
\end{align}
Since $y=-\b^{-1}\log z$, for any non-negative integers $R_1,R_{-1},R_0\geq0$ such that $R_1+R_0+R_{-1}=d$ we have
\begin{align} \label{eqLem_Explicit_Y3}
	\limb(\exp(-\b yR_1)+\exp(-\b yR_{-1}))\bcfr{1+\eb }2^{yR_0}&=z^{R_1\wedge R_{-1}}.
\end{align}
Thus, combining \eqref{eqLem_Explicit_Y1}--\eqref{eqLem_Explicit_Y3}, we obtain
\begin{align} \label{eqLem_Explicit_Y4}
	\limb \log \Erw[X_1^{y}] = \log \Erw\brk{z^{\vR_1\wedge\vR_{-1}}}.
\end{align}

To calculate the mean on the r.h.s.\ consider a $d$-step symmetric random walk on $\{0,1,\ldots,d\}$ with a reflective barrier at $0$.
The walk starts at $0$ and the available moves are $+1$, $-1$ or $0$, with probabilities $\a$, $\a$ and $1-2\a$ respectively.
We couple this random walk with the probability space $(\RHO_1,\ldots,\RHO_d)$ such that $\vR_1$ and $\vR_{-1}$ count the $\pm1$ moves of the random walk, respectively.
Thus, $\vR_0=d-\vR_1-\vR_{-1}$ equals the number of $0$-moves and $\abs{\vR_1-\vR_{-1}}$ is the final position of the walk.
To study this random walk we remember the matrix $\cM$ from \eqref{eq_Mdef} and introduce
\begin{align*}
	\fA=(1-2\a) \id + 2 \a t \cM,
\end{align*}
where $t$ is a formal variable that we introduce to track the walk's movements.
Specifically, for any $i\in[d]$ the $(1,i)$-entry of the $d$-th power of $\fA$ works out to be
\begin{align} \label{eq_random_walk}
	\fA^d_{1\,i} = \sum_{k=i-1}^d t^k \Pr \brk{ \vR_1 + \vR_{-1} = k \mbox{ and } \abs{\vR_1 - \vR_{-1}} = i-1}
\end{align}
Finally, we introduce the vector $$\fx = ( 1, t^{-1}, t^{-2}, t^{-3}, \cdots )^T\in\RR^{(d+1)\times 1}.$$
Then recalling the definition of vector $\zeta$ from \eqref{eq_v_def}, we readily find
\begin{align} \label{eq_interpol_aux}
    \zeta \fA^d \fx = \sum_{k=0}^d t^k \Pr \brk{\vR_1 + \vR_{-1} - \abs{\vR_1 - \vR_{-1}} = k} = \Erw \brk{t^{\vR_1 + \vR_{-1} - \abs{\vR_1 - \vR_{-1}}}}.
\end{align}
Let us shed light on the combinatorial meaning of \eqref{eq_interpol_aux}. $\zeta \fA^d$ is a $(d+1)$-dimensional vector where the $i$th entry captures the probability of all random walks that end up at position $i-1$ and where the exponent of $t$ measures the number of non-stationary steps performed to reach position $i-1$. Thus, using the definition from \eqref{eq_random_walk} we have
\begin{align*}
    \zeta \fA^d = \bc{\fA^d_{1\,1}, \fA^d_{1\,2}, \fA^d_{1\,3}, \dots}.
\end{align*}
The multiplication with vector $\fx$ then deducts the final position $\abs{\vR_1-\vR_{-1}}$ of the random walk from the exponent of $t$. In effect, the exponent now captures the total number of offsetting non-stationary steps of the random walk which is precisely twice the minimum of $\vR_1$ and $\vR_{-1}$. This relationship can be compactly written in the basic identity
\begin{align*}
    \vR_1 + \vR_{-1} - \abs{\vR_1 - \vR_{-1}} = 2 \bc{\vR_1\wedge\vR_{-1}}.
\end{align*}
We are now in a position to relate \eqref{eqLem_Explicit_Y3} to \eqref{eq_interpol_aux} by writing
\begin{align*}
    \limb \log \Erw[X_1^{y}] = \log \Erw\brk{z^{\vR_1\wedge\vR_{-1}}} = \Erw \brk{\sqrt{z}^{\vR_1 + \vR_{-1} - \abs{\vR_1 - \vR_{-1}}}} = \log \bc{\zeta \fA^d \fx \vert_{t=\sqrt{z}}}.
\end{align*}
Since $\cAda$ from \eqref{eq_A_def} and $\xi$ from \eqref{eq_w_def} were defined in terms of $\sqrt{z}$ rather than $t$ we conclude that
\begin{align*}
    \limb \log \Erw\brk{X_1^{y}} = \log \bc{\zeta \fA^d \fx \vert_{t=\sqrt{z}}} = \log \bc{\zeta\cAda^d \xi}
\end{align*}
as claimed.
\end{proof}

\begin{proof}[Proof of \Prop~\ref{Prop_Explicit_Yprime}]
The proposition is an immediate consequence of \Lem s~\ref{Lem_Explicit_Yprime} and~\ref{Lem_Explicit_Y}.
\end{proof}


\end{document}